\theoremstyle{plain} 
\newtheorem{thm}{Theorem}[section] 
\newtheorem{prop}[thm]{Proposition}
\newtheorem{lem}[thm]{Lemma}
\newtheorem{fact}[thm]{Fact}
\theoremstyle{remark}
\title{Dispersion on the Complete Graph\thanks{An extended abstract containing some results of this work appeared in the proceedings of EUROCOMB '23~\cite{DMP23proc}.}}
\author{Umberto ~De Ambroggio\thanks{University of Munich, Department of Mathematics, Theresienstr.~39, 80333 Munich, Germany. Email: \texttt{\{deambrog,makai,kpanagio\}@math.lmu.de}. Supported by ERC Grant Agreement 772606-PTRCSP.}
\and Tam\'as ~Makai\footnotemark[2]
\and Konstantinos ~Panagiotou\footnotemark[2]
}
\begin{document}
	
\maketitle

\begin{abstract}
We consider a synchronous process of particles moving on the vertices of a graph~$G$, introduced by Cooper, McDowell, Radzik, Rivera and Shiraga (2018). Initially,~$M$ particles are placed on a vertex of~$G$. At the beginning of each time step, for every vertex inhabited by at least two particles, each of these particles moves independently to a neighbour chosen uniformly at random. The process ends at the first step when no vertex is inhabited by more than one particle. 
		
Cooper et al.~showed that when the underlying graph is the complete graph on~$n$ vertices, then there is a phase transition when the number of particles $M = n/2$. They showed that if $M<(1-\varepsilon)n/2$ for some fixed $\varepsilon>0$, then the process finishes in a logarithmic number of steps, while if $M>(1+\varepsilon)n/2$, an exponential number of steps are required with high probability. Here we provide a thorough asymptotic analysis of the dispersion time around criticality, where $\varepsilon = o(1)$, and describe the transition from logarithmic to exponential time. As a consequence of our results we establish, for example, that the dispersion time is in probability and in expectation in $\Theta(n^{1/2})$ when $|\varepsilon| = O(n^{-1/2})$, and provide qualitative bounds for its tail behavior.
\end{abstract}
	
\section{Introduction}
We consider the synchronous \textit{dispersion process} introduced by Cooper, McDowell, Radzik, Rivera and Shiraga \cite{CMRRS18}.
It involves particles that move between vertices of a given graph $G$. A particle is called \emph{happy} if there are no other particles on the same vertex, otherwise it is \emph{unhappy}. Initially,~$M \ge 2$ particles are placed on some vertex of $G$ and are all unhappy. In every (discrete) time step, all unhappy particles move simultaneously and independently to a neighbouring vertex selected uniformly at random. Happy particles do not move. The process terminates at the first time at which all particles are happy; this (random) time is called the \textit{dispersion time} and constitutes the main object of interest here. 

In \cite{CMRRS18} the authors studied this process on several graphs, and established results on the dispersion time and the maximum distance of any particle from the origin at the point in time of dispersion.
A particular case considered in~\cite{CMRRS18} is the behaviour  when the underlying graph is the complete graph $K_n$ with $n$ vertices. In that case we denote the dispersion time by $T_{n,M}$, since, due to symmetry, the choice of the initial vertex does not matter. The most general results come from considering a \textit{lazy} variant of the dispersion process, which was shown to disperse more particles in a comparable number of steps. More precisely, in this lazy version any unhappy particle moves with probability $q\in (0,1]$ and stays at its current location with probability $1-q$.
The main result of~\cite{CMRRS18} regarding the dispersion time of this variant of the process, which we denote by  $T_{n,M}(q)$, is that there are constants $c,C > 0$ such that, if $M=(1-q/2-\alpha)n$ where $\alpha > 0$ is allowed to depend on $n$, then
\begin{equation}\label{clazy0}
    T_{n,M}(q) \le  C (q\alpha)^{-1}\ln n ~\text{ with probability at least }~ 1-C/n,
\end{equation}
whereas when $M=n(1-q/2+\alpha)$, then 
\begin{equation}\label{clazy}
    T_{n,M}(q) \geq e^{c n q^3 \alpha^4} ~ \text{ with probability at least } ~ 1-e^{-c n q^3 \alpha^4}.
\end{equation}
The above statements leave several questions open. Indeed, corresponding bounds for the lower and upper tails of $T_{n,M}$ were not provided. 
Moreover, it is not clear  what the actual behavior is when $M$ is close to $n/2$, that is, when $M = (1+\varepsilon)n/2$ for some $|\varepsilon| = o(1)$ and how the transition from logarithmic to exponential time quantitatively looks like.  
For example,~\eqref{clazy} is not informative when $\alpha=o(n^{-1/4})$, as it essentially only states that the number of steps is at least one. 

Our main contribution is a thorough  analysis of the dispersion process when $q = 1$ and, as above, we assume that $M=(1+\varepsilon)n/2$ and $|\varepsilon| = o(1)$. We establish that the process exhibits three qualitatively different behaviours based on the asymptotics of $\varepsilon$, where, informally speaking, $T_{n,M}$ smoothly changes from $|\varepsilon|^{-1} \ln(\varepsilon^2n)$ to $n^{1/2}$ and then to $\varepsilon^{-1} e^{\Theta(\varepsilon^2 n)}$; in particular, $T_{n,M} = \Theta(n^{1/2})$ whenever $M = n/2 + O(\sqrt{n})$ within a \emph{scaling window} of size $O(\sqrt{n})$.
We continue with a detailed and formal description of the dispersion process and state  our results.

\paragraph{Model \& Results.} Let $n \in \mathbb{N}$. We denote by $M\ge 2$  the number of particles in the dispersion process, and we write $\mathcal{U}_t$ and $\mathcal{H}_t$ for the \textit{sets} of unhappy and happy particles at time $t$, respectively.
Moreover, we set $U_t\coloneqq |\mathcal{U}_t|$ and $H_t\coloneqq |\mathcal{H}_t|$.
The process evolves as follows. Initially, which is at time $t=0$, all $M$ particles are placed on \textit{one} distinguished vertex, say vertex 1, and are unhappy. Thus, writing $p_i$ for the $i$-th particle, $1\le i \le M$,  we set
\[
    \mathcal{U}_0 = {\cal P}\coloneqq \{p_1, \dots, p_M\},~~ U_0 = M
    \quad \text{and} \quad 
    \mathcal{H}_0 = \emptyset,~~ H_0 = 0.
\]
For every $t\in \mathbb{N}_0$, the distribution of $\mathcal{H}_{t+1}$ (and thus also of $\mathcal{U}_{t+1}, U_{t+1},H_{t+1}$), given $\mathcal{U}_{t}$, is defined as follows.
Each particle in $\mathcal{U}_{t}$ moves to one of the $n$ vertices selected independently and uniformly at random and each particle in $\mathcal{H}_t$ remains at its position. 
In particular, if we denote by $p_{i,t}$ the position of particle $i$ at time $t\in \mathbb{N}_0$, then we set
\[
    p_{i,0} = 1, \quad 1\le i \le M,
\]
and, in distribution, 
\[
    p_{i,t+1} =
    \begin{cases}
        p_{i,t}, & \text{ if } p_i \in {\cal H}_t, \\
        G_{i,t+1}, & \text{ if } p_i \in {\cal U}_t, 
    \end{cases},
    \quad 1 \le i \le M, t \in \mathbb{N}_0,
\]
where $(G_{i,t})_{1 \le i \le n, t\in\mathbb{N}_0}$ are independent and uniform from $\{1, \dots, n\}$. In addition, we set for $t \in \mathbb{N}_0$
\[
    {\cal H}_{t+1} = \big\{ p_i \in {\cal P}: p_{i,t+1} \neq p_{j,t+1} \text{ for all $j \in \{1, \dots, M\} \setminus \{i\}$}\big\},
    \quad
    {\cal U}_{t+1} = \{p_1, \dots, p_M\} \setminus {\cal H}_{t+1}.
\]
With this notation, the \emph{dispersion time}, which is our main object of study, is defined as the smallest $t$ at which there are no unhappy particles, that is,
\begin{equation}
\label{eq:TnM}
    T_{n, M} \coloneqq \inf\{t \in \mathbb{N}_0 : U_t = 0\}.
\end{equation}
Our main results are the following two theorems. The first one addresses the upper tail of $T_{n,M}$.
\begin{thm}\label{mainthm}
There is a $C > 0$ such that the following is true for sufficiently large $n$ and all $A \ge 1$. Let $M = (1+\varepsilon)n/2 \in \mathbb{N}$, where $\varepsilon = o(1)$  and $|\varepsilon| < 1/9$. If $\varepsilon< -e n^{-1/2}$, then
	$$\mathbb{P}\big(T_{n,M}>A C |\varepsilon|^{-1}\ln(\varepsilon^2 n)\big)\le e^{-A}.$$
	Moreover, if $|\varepsilon| \le e n^{-1/2}$, then
	$$\mathbb{P}\big(T_{n,M}>A C n^{1/2}\big)\le e^{-A}.$$
	Finally, if $\varepsilon > e n^{-1/2}$, then
	$$\mathbb{P}\big(T_{n,M}> A \varepsilon^{-1} e^{C \varepsilon^2n}\big)\le e^{-A}.$$
\end{thm}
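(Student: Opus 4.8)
The plan is to track the number of unhappy particles $U_t$ as a one-dimensional (time-inhomogeneous) stochastic process and control its drift, then convert drift estimates into tail bounds for the absorption time $T_{n,M} = \inf\{t : U_t = 0\}$. The starting observation is that, conditioned on $U_t = u$ unhappy particles about to move, the number of particles that become happy at step $t+1$ is governed by a balls-in-bins experiment: $u$ balls thrown uniformly into $n$ bins (the happy particles sit on $H_t = M - u$ distinct bins that act as ``forbidden'', but for the regime $u$ not too large this is a lower-order correction). A ball landing alone in an otherwise empty bin produces a happy particle; so the expected number of newly unhappy particles is roughly $\mathbb{E}[U_{t+1} \mid U_t = u] \approx u\bigl(1 - e^{-u/n}\bigr) + (\text{collision terms with the } M-u \text{ happy sites})$. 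Writing $M = (1+\varepsilon)n/2$, one finds that the map $u \mapsto \mathbb{E}[U_{t+1}\mid U_t=u]$ has the shape that drives the phase transition: for small $u/n$ the drift $\mathbb{E}[U_{t+1} - U_t \mid U_t = u]$ is negative of order $-u^2/n$ when $\varepsilon$ is small, while the happy-site collisions contribute a positive term of order $u\varepsilon$; the balance point is $u = \Theta(\varepsilon n)$ in the supercritical case. I would make this precise as a Lemma (presumably already available in the body of the paper as the drift analysis that proves the logarithmic and exponential regimes), stating concentration of $U_{t+1}$ around its conditional mean with sub-Gaussian or Bernstein-type tails of width $O(\sqrt{u})$.

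Given the drift picture, the three cases are handled by three different potential/supermartingale arguments. In the \emph{subcritical} case $\varepsilon < -en^{-1/2}$, the drift is genuinely contracting: from $U_t = u$ one has $\mathbb{E}[U_{t+1}\mid U_t] \le (1 - c|\varepsilon|) U_t$ roughly once $U_t$ is below $\Theta(|\varepsilon| n)$, and from the initial value $M = \Theta(n)$ it takes $O(|\varepsilon|^{-1}\log(\varepsilon^2 n))$ steps of geometric-type decay to bring $U_t$ down to $O(1)$, after which an additional $O(1)$ steps (in expectation) finish the job; one packages the geometric decay of $\log U_t$ (or of $U_t$ itself once small) into a supermartingale, applies optional stopping / a Freedman-type inequality, and iterates the ``restart'' to get the $e^{-A}$ tail by splitting time into $A$ blocks each of which has a constant probability of success — the tail is then at most $e^{-A}$ by independence of the blocks conditioned on not yet finishing. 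In the \emph{critical window} $|\varepsilon| \le en^{-1/2}$, the drift is essentially $-\Theta(U_t^2/n)$ (the $\varepsilon$-term is now negligible), so $U_t$ behaves like the solution of $\dot u = -u^2/n$, i.e.\ $u(t) \asymp n/(t+1)$; this predicts that $U_t$ drops below, say, $n^{1/2}$ after $\Theta(n^{1/2})$ steps, and below $O(1)$ after another $\Theta(n^{1/2})$ steps (the decay from $n^{1/2}$ to $1$ costs $\sum_{k=1}^{n^{1/2}} n/k^2 \cdot$ something — careful here: from level $n^{1/2}$ the per-step decrease is $\Theta(1)$, so it's $O(n^{1/2})$ steps, consistent). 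I would again choose a potential — something like $\Phi(u) = n/u$ or $\Phi(u) = -1/u$, for which the drift of $\Phi(U_t)$ is bounded below by a positive constant — and run a supermartingale argument with a Freedman/Azuma bound on the accumulated fluctuations, then stitch $A$ independent ``epochs'' to upgrade to $e^{-A}$.

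In the \emph{supercritical} case $\varepsilon > en^{-1/2}$, the process has a quasi-stationary plateau near $U_t \approx \varepsilon n$ and only reaches $0$ via a large deviation; the claimed bound $A\varepsilon^{-1}e^{C\varepsilon^2 n}$ says that the expected escape time is $\varepsilon^{-1}e^{\Theta(\varepsilon^2 n)}$ and the tail is exponential in $A$. The upper bound direction (which is what Theorem~\ref{mainthm} asserts) requires showing the process does \emph{not} take much longer than this. The strategy: below the plateau, say for $U_t \le \varepsilon n/2$, the drift is once again contracting toward $0$ of the subcritical type (with rate $\asymp \varepsilon$), so if we ever get below $\varepsilon n /2$ we finish within $O(\varepsilon^{-1}\log(\varepsilon^2 n))$ further steps with good probability; the bottleneck is crossing from the plateau down to $\varepsilon n /2$, a descent against a positive drift across a ``valley'' of depth $\Theta(\varepsilon n)$ with step fluctuations of size $\Theta(\sqrt{\varepsilon n})$, which by a standard random-walk-in-a-potential estimate (or a hitting-time computation for a biased walk, e.g.\ Gambler's ruin with exponentially small success probability) succeeds in any given window of $\Theta(\varepsilon^{-1})$ steps with probability at least $e^{-C\varepsilon^2 n}$. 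Repeating $Ae^{C\varepsilon^2n}$ independent windows then makes the failure probability at most $(1 - e^{-C\varepsilon^2 n})^{A e^{C\varepsilon^2 n}} \le e^{-A}$.

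I expect the main obstacle to be the supercritical descent estimate: one must show a \emph{lower} bound $\ge e^{-C\varepsilon^2 n}$ on the probability of the rare downward crossing in a bounded number of steps, uniformly over the whole critical-to-supercritical range $\varepsilon \in (en^{-1/2}, o(1))$, and this requires a genuinely two-sided control of the transition kernel of $U_t$ (not just drift and variance but enough regularity to run a change-of-measure / Cramér-type argument, or a careful explicit coupling to a biased random walk whose step distribution dominates/is dominated appropriately). Handling the forbidden happy sites rigorously — the bins occupied by $M - u$ happy particles genuinely change the balls-in-bins law, and near criticality $M - u$ is about $n/2$, a constant fraction of the bins — adds a layer of bookkeeping to every drift computation. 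A secondary but pervasive technical point is the ``last few particles'' regime $U_t = O(1)$: here the diffusion/drift heuristics break down and one needs a direct argument that from $U_t = O(1)$ the process absorbs in $O(1)$ expected steps with exponential tails, which is an easy but necessary lemma (two unhappy particles collide and both escape with probability $\Theta(1)$ each step). Finally, assembling the three regimes with uniform constants and making the $e^{-A}$ tail come out cleanly requires the ``independent epochs'' trick to be applied carefully with the strong Markov property, conditioning on the current state lying in the relevant region at the start of each epoch.
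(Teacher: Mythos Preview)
Your overall architecture---drift analysis to bring $U_t$ down to the ``interesting'' scale, then a one-epoch success probability, then the restart/epochs trick via the strong Markov property to upgrade to $e^{-A}$ tails---matches the paper exactly (their Lemmas~3.2 and~3.3 are precisely this). The subcritical case is also fine: once $\varepsilon<0$ the upper bound $\mathbb{E}[U_{t+1}\mid U_t]\le (1+\varepsilon)U_t - U_t^2/n\le (1-|\varepsilon|)U_t$ gives geometric contraction, and your epochs argument goes through.

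The genuine gap is in the supercritical case. You write that ``below the plateau, say for $U_t\le \varepsilon n/2$, the drift is once again contracting toward $0$ of the subcritical type''. This is false: the drift $\mathbb{E}[U_{t+1}-U_t\mid U_t=u]\approx \varepsilon u - u^2/n$ is \emph{positive} for every $0<u<\varepsilon n$, so below the plateau the process is pushed \emph{back up} toward $\varepsilon n$, not down toward $0$. There is no level below which a subcritical-type contraction takes over; the entire descent from $\Theta(\varepsilon n)$ to $0$ is against the drift. Consequently your decomposition ``rare crossing to $\varepsilon n/2$, then easy contraction to $0$'' does not work, and a Gambler's-ruin estimate for only the first half of the descent misses the point.

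What the paper does instead---and this is the key idea you are missing---is to dominate $(U_{t_0+t})_{t\ge 0}$ from above by the \emph{branching process} $Z_t$ with offspring law $2\,\mathrm{Ber}((1+\varepsilon)/2)$ (their Lemma~2.5: each unhappy particle, when it jumps, hits an occupied vertex with probability at most $M/n=(1+\varepsilon)/2$, creating at most two unhappy particles). This coupling is valid regardless of the sign of $\varepsilon$ and reduces the one-epoch success probability to an extinction computation: starting from $r\le 13\hat\varepsilon n$ independent Galton--Watson trees with mean $1+\varepsilon$, the probability that \emph{all} of them die by generation $k=\hat\varepsilon^{-1}/2$ is at least $(1-O(\hat\varepsilon))^{13\hat\varepsilon n}\ge e^{-C\hat\varepsilon^2 n}$ (their Lemma~3.5, using the explicit recursion for $\mathbb{P}(\mathcal{T}(\varepsilon)\ge k)$ in Lemma~2.8). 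This single calculation handles the critical and supercritical regimes simultaneously and sidesteps any random-walk-in-a-potential or change-of-measure argument. Your potential $\Phi(u)=n/u$ for the critical window is a reasonable alternative there, but it does not extend to $\varepsilon>en^{-1/2}$, where the branching-process viewpoint is really what makes the $e^{-C\varepsilon^2 n}$ lower bound on the one-shot extinction probability transparent.
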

\noindent
Our second main result provides bounds for the lower tail of the distribution of $T_{n,M}$.
\begin{thm}\label{mainthm2}
    There is a $c > 0$ such that the following is true for sufficiently large $n$ and all $A \ge 1$. 
	Let $M = (1+\varepsilon)n/2 \in \mathbb{N}$, where $|\varepsilon| = o(1)$ and $|\varepsilon| < 1/9$. If $\varepsilon< -e n^{-1/2}$, then
	$$\mathbb{P}\big(T_{n,M}\le c |\varepsilon|^{-1}\ln(\varepsilon^2 n)/A\big)\le e^{-A}.$$
	Moreover, if $|\varepsilon| \le e n^{-1/2}$, then
	$$\mathbb{P}\big(T_{n,M}\le c n^{1/2}/A\big)\le e^{-A}.$$
	Finally, if $\varepsilon > e n^{-1/2}$, set $k_0\coloneqq e^{c\varepsilon^2 n}$. Then
	$$\mathbb{P}\big(T_{n,M}< \varepsilon^{-1}k_0/A\big)\le
\left\{
\begin{array}{ll}
      \exp\left(-\frac{A \varepsilon^2 n}{k_0 }\right) & \mbox{, if } A>k_0 \\
      A^{-1} &\mbox{, if } A\le k_0
\end{array}
\right. \enspace .
$$
\end{thm}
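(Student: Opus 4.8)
The starting point is the observation that, by the vertex-transitivity of $K_n$ together with the exchangeability of the particles, the sequence $(U_t)_{t\ge 0}$ is itself a Markov chain on $\{0,1,\dots,M\}$ with $0$ absorbing: conditionally on $U_t=k$, the next value $U_{t+1}$ is distributed as the number of particles lying in a vertex shared by at least two particles after $k$ balls are thrown independently and uniformly into $n$ bins, exactly $M-k$ of which are already occupied by the (pairwise distinct) happy particles. Writing $U_{t+1}=k-Y+Z$, where $Y$ counts the unoccupied bins receiving exactly one ball and $Z$ the pre-occupied bins receiving at least one ball, a routine balls-into-bins computation gives the drift
\[
  \mathbb{E}[U_{t+1}\mid U_t=k]=(1+\varepsilon)k-(c_\star+o(1))\tfrac{k^2}{n}+O\!\big(\tfrac{k^3}{n^2}\big)\qquad(k=o(n)),
\]
with an absolute constant $c_\star>0$, and the crude two-sided bounds $(1+\varepsilon)k-3k^2/n\le \mathbb{E}[U_{t+1}\mid U_t=k]\le (1+\varepsilon)k-\tfrac14 k^2/n$ valid for all $k\le M$. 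Moreover $U_{t+1}$, as a function of the $k$ i.i.d.\ ball positions, has bounded differences $O(1)$ and conditional variance $O(k)$, so McDiarmid's inequality (at a fixed time) and a Freedman/Bernstein-type inequality (along the trajectory) both apply; and one has the elementary estimate $\mathbb{P}(U_{t+1}=0\mid U_t=k)\le \rho^{\,k}$ with $\rho=\tfrac12(1+|\varepsilon|)<1$. In every regime the theorem follows by showing that $U_t$ stays positive, indeed large, for all $t\le T_0$ with probability $\ge 1-e^{-A}$ (respectively with the stated probabilities in the supercritical case), for the relevant window length $T_0$, since $U_{T_0}\ge 1$ forces $T_{n,M}>T_0$.

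\emph{The subcritical and critical ranges ($\varepsilon\le e n^{-1/2}$).} Here $\mathbb{E}[U_{t+1}\mid U_t=k]\le(1-|\varepsilon|)k$, so $U_t$ is decreasing on average, and the deterministic mean recursion $x_{t+1}=(1-|\varepsilon|)x_t-c_\star x_t^2/n$ started at $x_0=M$ has a ``harmonic'' phase $x_t\asymp n/t$ lasting $\Theta(1/|\varepsilon|)$ steps (for $|\varepsilon|\le en^{-1/2}$ this phase runs up to $t\asymp\sqrt n$) followed, when $\varepsilon<0$, by a geometric phase $x_t\asymp|\varepsilon|n(1-|\varepsilon|)^t$. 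The plan is to prove, by induction on $t$, that with probability $\ge 1-e^{-A}$ one has $U_t\in[\ell_t,u_t]$ for all $t\le T_0$, where $\ell_t\le u_t$ solve the corresponding coupled sub-/super-recursions with a fluctuation allowance, and $T_0$ is the last time $\ell_t\ge 1$. The substance is to control the martingale $N_t=\sum_{s<t}\big(U_{s+1}-\mathbb{E}[U_{s+1}\mid\mathcal{F}_s]\big)$: its predictable quadratic variation is $O\!\big(\sum_{s<t}U_s\big)$, which stays of order $n$ throughout the geometric phase (the geometric series converges), so the fluctuation of $N_t$ is of order $\sqrt{n}$, and this is smaller than $\tfrac12\ell_t\asymp |\varepsilon|n(1-|\varepsilon|)^t$ precisely while $t\lesssim |\varepsilon|^{-1}\log(\varepsilon^2 n)$ — which is the origin of the threshold; in the critical case $\sum_{s\le t}U_s\asymp nt$ while $x_t\asymp n/t$, and one gets survival for $t\lesssim\sqrt n$, which in the $/A$-form claimed is easy. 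Since $\{U_t\ge\ell_t\ \forall t\le T_0\}$ implies $T_{n,M}>T_0$, rescaling the constant $c$ yields the two claimed inequalities.

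\emph{The supercritical range ($\varepsilon>en^{-1/2}$).} Now the mean map has an attracting fixed point at $u^\star n$ with $u^\star=\varepsilon/c_\star$, and the chain is metastable there. I would argue as follows. (i) Using the harmonic-phase analysis for the descent from $M$, together with the restoring drift $\mathbb{E}[U_{t+1}-U_t\mid U_t=k]\ge\Theta(\varepsilon^2 n)$ that holds for $k\in[\tfrac12u^\star n,\tfrac34 u^\star n]$ (preventing undershoot), with probability $\ge 1-e^{-A}$ the chain enters $[\tfrac12 u^\star n,u^\star n]$ within $O(\varepsilon^{-1}\log(\varepsilon^2 n))$ steps and thereafter every down-excursion below $\tfrac34 u^\star n$ starts from $\ge\tfrac34 u^\star n$. (ii) On $[\tfrac12 u^\star n,\tfrac34 u^\star n]$ the process $\Phi(U_t)=\exp(-\theta U_t)$ with $\theta=\Theta(\varepsilon)$ small enough is a supermartingale (one checks $\mathbb{E}[e^{-\theta(U_{t+1}-U_t)}\mid U_t=k]\le 1$ by combining the positive drift $\Theta(\varepsilon^2 n)$ with the conditional variance $O(\varepsilon n)$, which is what forces $\theta\lesssim\varepsilon$); optional stopping then bounds the probability that a given down-excursion reaches $\tfrac12 u^\star n$ before returning above $\tfrac34 u^\star n$ by $\Phi(\tfrac34 u^\star n)/\Phi(\tfrac12 u^\star n)=e^{-\Theta(\theta\varepsilon n)}=e^{-\Theta(\varepsilon^2 n)}$. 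A union bound over the at most $K$ excursion-starts in a window of length $K$ gives $\mathbb{P}(\exists t\le K:\ U_t<\tfrac12 u^\star n)\le K\,e^{-\Theta(\varepsilon^2 n)}+e^{-A}$, and since $\tfrac12 u^\star n>1$ this is a lower bound on $\mathbb{P}(T_{n,M}\le K)$. Taking $K=\varepsilon^{-1}k_0/A$ with $k_0=e^{c\varepsilon^2 n}$ and $c$ smaller than the implied constant yields $\mathbb{P}(T_{n,M}<\varepsilon^{-1}k_0/A)\le A^{-1}$ when $A\le k_0$. When $A>k_0$ the window $K<\varepsilon^{-1}$ is too short for the chain to reach the fixed point, and instead one uses $\mathbb{P}(T_{n,M}<K)\le\sum_{t<K}\mathbb{E}[\rho^{\,U_t}]\le K\rho^{\,U_K}$ with $U_t\gtrsim n/t\gtrsim n\varepsilon A/k_0$, giving the bound $\exp(-A\varepsilon^2 n/k_0)$.

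\emph{Main obstacle and boundary cases.} The crux is the second paragraph: propagating the concentration of $U_t$ down to $U_t$ of order $1$ with an $e^{-A}$ failure probability, which needs a Freedman/Bernstein-type argument organised in dyadic time blocks so as to cope with the large increments and variance of $U_t$ while it is still of order $n$, and which must be sharp enough to see that the argument first fails exactly at the scale $t\asymp |\varepsilon|^{-1}\log(\varepsilon^2 n)$ (rather than, say, $|\varepsilon|^{-1}\log(|\varepsilon| n)$). The narrow ranges where the three regimes meet — around $|\varepsilon|\asymp n^{-1/2}$ and around $\varepsilon^2 n\asymp\mathrm{polylog}(n)$ — are dealt with by running each of the two adjacent arguments on a slightly enlarged parameter interval, so that in the overlap the statement that is claimed is the weaker of the two established bounds.
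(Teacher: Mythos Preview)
Your outline follows a genuinely different route from the paper. The paper never tracks the whole trajectory of $U_t$ via Freedman-type concentration, nor does it use an exponential Lyapunov function $e^{-\theta U_t}$ for the supercritical metastability. Instead, once $U_t$ has dropped into a window $[\hat\varepsilon n/2^{10},\hat\varepsilon n/2^5]$ (where $\hat\varepsilon=\max\{|\varepsilon|,en^{-1/2}\}$, and the no-overshoot is guaranteed by a one-step McDiarmid bound giving $U_{t+1}\ge 2^{-5}U_t$ w.h.p.), it couples $(U_t)$ \emph{from below} with a ``binomial process'' $Z_{t+1}=2\,\mathrm{Bin}(Z_t,(1+\varepsilon-4\delta)/2)$, i.e.\ with $\Theta(\hat\varepsilon n)$ independent Galton--Watson trees with offspring law on $\{0,2\}$; the lower tail then reduces to an explicit estimate of $\mathbb{P}(\mathcal{T}(\varepsilon')\ge k)$. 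For $\varepsilon>en^{-1/2}$ the paper counts how many times $U_t$ traverses a strip $[\hat\varepsilon n/32,\hat\varepsilon n/8]$: each upward crossing takes $\Omega(\varepsilon^{-1})$ steps by a random-walk/Azuma estimate, and within each visit the branching-process coupling gives extinction probability $\le e^{-\Theta(\varepsilon^2 n)}$; a union bound over $k_0/A$ visits finishes. Your Lyapunov argument captures the same rate $\Theta(\varepsilon^2 n)$ and is a perfectly legitimate alternative (one does need the sub-Gaussian proxy $O(U_t)$ for the one-step increment, not the crude $O(U_t^2)$, for $\theta=\Theta(\varepsilon)$ to work --- which you implicitly use).

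Where your proposal is thin is the $A$-dependence in the subcritical and critical regimes. ``Rescaling the constant $c$'' does not by itself manufacture an $e^{-A}$ tail; you need the failure probability of $\{U_t\ge\ell_t\ \forall t\le T_0\}$ to be $\le e^{-cA}$, which means quantifying how $\ell_{T_0}^2/V_{T_0}$ grows as $T_0$ shrinks by a factor $A$. The paper gets this cleanly from the branching calculation: with $N\asymp\hat\varepsilon n$ trees and $\mathbb{P}(\mathcal{T}\ge k)\gtrsim 1/k$, the extinction probability $(1-1/k)^N\le e^{-N/k}$ is directly exponential in $A$ once $k\asymp\hat\varepsilon^{-1}\log(\hat\varepsilon^2 n)/A$. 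In your framework the harmonic phase alone contributes $\Theta(n\log(1/|\varepsilon|))$ to the predictable variance, which can be comparable to $\ell_{T_0}^2$ when $\varepsilon^2 n$ is bounded, so the two phases must be handled separately with a restart of the martingale in between; this is doable but not the one-liner you suggest. Your last line for $A>k_0$ is also not sound as written: $K\rho^{U_K}$ has a random $U_K$ on the right, and replacing it by a deterministic lower envelope $U_t\gtrsim n/t$ presupposes exactly the trajectory control that is at issue.
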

\noindent
Let us briefly discuss some consequences of our results. First of all, the two theorems combined imply that in probability
\[
    T_{n,M} = \Theta(|\varepsilon|^{-1} \ln(\varepsilon^2n))
    \quad \text{if} \quad \varepsilon < -en^{-1/2},
\]
and 
\[
    T_{n,M} = \Theta(n^{1/2})
    \quad \text{if} \quad |\varepsilon| = O(n^{-1/2}).
\]
For larger $\varepsilon$ we obtain the slightly weaker uniform estimate that in probability 
\[ 
    \ln(T_{n,M})
    = \Theta( \varepsilon^2 n +  \ln( \varepsilon^{-1}))
    \quad \text{if} \quad \varepsilon = \omega(n^{-1/2}).
\]
These results imply that there is critical window around $n/2$ of order $n^{1/2}$, where the dispersion time is of order $n^{1/2}$ as well. Moreover, the dispersion time increases smoothly from logarithmic to $n^{1/2}$ and then to exponential when we gradually increase~$\varepsilon$.
Apart from these estimates we can also use our main theorems to obtain information about, for example, the expectation of $T_{n,M}$. In particular, Theorem~\ref{mainthm} guarantees that $T_{n,M}$ has  an exponential(-ly thin) upper tail and so~$T_{n,M}$ is, after appropriate normalization, integrable; we readily  obtain 
\[
    \mathbb{E}[T_{n,M}] = \Theta(|\varepsilon|^{-1} \ln(\varepsilon^2n))~ \text{  if  } ~ \varepsilon \le -en^{-1/2},
    \qquad
    \mathbb{E}[T_{n,M}] = \Theta(n^{1/2}) ~\text{ if }~ |\varepsilon|  = O(n^{-1/2}),
\]
and
\[
    \ln \mathbb{E}[T_{n,M}] = \Theta(\varepsilon^2 n +  \ln(\varepsilon^{-1})) ~\text{ if }~ \varepsilon = \omega(n^{-1/2}).\\
\]

\paragraph{Directions for Future Research.} It seems plausible to think that the dispersion time is \textit{monotone increasing} with respect to the number of particles, in the following sense. Let $M\leq M'$. Then we conjecture that
\begin{equation}\label{conjecture}
    \mathbb{P}(T_{n,M} \ge t) \leq \mathbb{P}(T_{n,M'} \ge t), \quad t\in\mathbb{N}_0,
\end{equation}
that is, $T_{n,M'}$ stochastically dominates $T_{n,M}$. Such a statement would have helped us at many places in our proofs. However, proving anything that even comes close to~\eqref{conjecture} seems out of reach.

Another important research direction is to study the distribution of $T_{n,M}$ in more detail. For example, we have established that $T_{n,M} = O_p(\sqrt{n})$ and $\mathbb{E}[T_{n,M}] = \Theta(\sqrt{n})$ if $M = n/2 + O(\sqrt{n})$ is within the critical window of the process. In the follow-up paper~\cite{ar:aofa} we prove that after the obvious rescaling,~$T_{n,M}$ converges in distribution to a non-trivial random variable, that is, for $\alpha \in \mathbb{R}$ and as $n\to\infty$
\[
    \frac{T_{n, n/2 + \alpha \sqrt{n} + o(\sqrt{n})}}{\sqrt{n}} \stackrel{d}\to X_\alpha,
    \quad
    \text{where $X_\alpha \ge 0$ is absolutely continuous}.  
\]
On the other hand, if $M$ is not in the critical window, that is, $|M-n/2| = \omega(\sqrt{n})$, then studying convergence properties of $T_{n,M}$ is an important open problem. 

\paragraph{Related Work.} The dispersion process was also studied by Frieze and Pegden~\cite{FP18}, who sharpened the result on the dispersion distance on the infinite path that was established in \cite{CMRRS18}. In particular, it was shown in~\cite{CMRRS18} that with high probability, the dispersion distance on the infinite path for $n$ particles is between $n/2$ and $O(n\ln n)$. Subsequently, in~\cite{FP18} the logarithmic factor in the upper bound was eliminated. A similar setup  was considered by Shang~\cite{S20}, where the author studied the dispersion distance on the infinite path in a non-uniform dispersion process in which an unhappy particle moves in the following step to the right with probability $p_n$ and to the left with probability $1-p_n$, independently of the other particles.

Processes where particles move between the vertices of a graph have been widely studied over the past decades; we refer the reader to \cite{CMRRS18} for references. Concerning processes whose scope is to \textit{disperse} particles on a discrete structure, a well-studied  model is Internal Diffusion Limited Aggregation; see \cite{DF91,LBG92}. In this model, particles sequentially start (one at a time) from a specific vertex designated as the origin.
Each particle moves randomly until it finds an unoccupied vertex; then it occupies it forever (meaning that it does not move at subsequent steps of the process). When a particle stops, the next particle starts moving. We emphasize that whenever a particle jumps to an occupied vertex, it just keeps moving without activating the occupant particle. In the dispersion process, on the other hand, when a (happy) particle  standing alone on a node is reached by another particle, it is reactivated and keeps moving until it becomes happy again.

\subsection{Main Proof Ideas} 
A key quantity in our proofs is the \emph{drift}, that is, the expected one-step change in the number of unhappy particles. In Section~\ref{estimatecondexp} we establish that
\begin{equation*}
    \label{eq:Eapprox}
    \mathbb{E}\big[U_{t+1} -U_t \mid U_t\big]
    =
    \varepsilon U_t - \Theta\left(U_t^2/n\right),
    \quad
    t\in\mathbb{N}_0.
\end{equation*}
So, as long as $U_t$ is much larger than $|\varepsilon| n$, then $U_{t+1} - U_t$ is a large negative number in expectation. In other words,
when there are `many' unhappy particles at time $t$, the expected  number of $U_{t+1}$ decreases by a considerable amount. However, this is no longer true when $U_t = O(|\varepsilon| n)$: then the expected decrease of $U_t$ might be tiny, or when $\varepsilon>0$, we can even expect that the number of unhappy particles \emph{increases}. We will use different methods to study the trajectory of $U_t$, depending on the range of~$\varepsilon$ and whether we are considering the upper or the lower tail. 

We briefly comment on the main ideas required to prove the  bounds on the dispersion time in Theorems \ref{mainthm} and \ref{mainthm2}.
We start by discussing the upper tail.
In the main argument we first use drift analysis~\cite{KK19} that relies on the behaviour of $\mathbb{E}[U_{t+1}-U_t \mid U_t]$
to considerably reduce the number of unhappy particles.
Once $U_t$ is sufficiently small we relate it to a different process that is easier to analyse, which we call the \emph{binomial process} (not to be confused with the homonymous process from probability theory!).
The binomial process is defined by $B_0 = U_{t_0}$ for some $t_0\in\mathbb{N}$ and recursively by setting $B_{t+1}=2\,\text{Bin}(B_{t},M/n)$, $t \in \mathbb{N}_0$. It is not very hard to establish that~$B_{t}$ provides an upper bound for $U_{t_0+t}$, as the probability that an unhappy particle moves to the same vertex as some other particle is at most~$M/n$, and in that case we account for two unhappy particles; see Section~\ref{couplingfacts} for the precise statements. Observe that the binomial process has a clear interpretation in terms of branching processes: it is equivalent to $B_0$ independent copies of a Galton-Watson branching process that have no offspring with probability $1-M/n$ and two offspring with probability $M/n$. By establishing an upper bound on the number of generations of each of these branching process trees, we then obtain the desired bounds for~$U_{t_0+t}$ for all $t\in\mathbb{N}$. See Section~\ref{sec:uppertail} for the details.

The study of the lower tail of $T_{n,M}$ is more involved. The basic idea though is again to compare $(U_t)_{t\in\mathbb{N}_0}$ to a binomial process, this time using different parameters. Indeed, an effective coupling is possible as long as $U_t$ remains sufficiently small; for example, if $U_t \le \delta n$, then an argument similar to the one in the upper tail shows that $U_{t+1}$ stochastically dominates $2\, \text{Bin}(U_t,M/n - O(\delta))$.
When studying the lower tail of $U_t$ we then distinguish two regimes according to the value of $\varepsilon$.
If $\varepsilon$ is `small', then we use the binomial process to bound (from below) the number of unhappy particles, since in that case the process is quite unlikely to become larger than $\delta n$.
However, when $\varepsilon$ is 'large', we take a different route.
We consider two sequences of stopping times, corresponding to the (random) times at which $U_t$ goes below and above, respectively, two well-defined thresholds, both located at $\Theta(\delta n)$. While $U_t$ remains below the upper margin of the strip, the coupling to the binomial process is applicable. We also show that a crossing of the above mentioned strip requires approximately $|\varepsilon|^{-1}$ steps. Thus, by showing that there are \textit{at least} $e^{\Theta(\varepsilon^2 n)}$ such crossings, we immediately obtain the desired lower bound on the dispersion time. See Section~\ref{sec:lowertail} for the details.

\paragraph{Notation.} Let $\mathbb{N}$ denote the set of positive integers and set $\mathbb{N}_0=\mathbb{N}\cup \{0\}$. Given $k\in \mathbb{N}$, we write $[k]\coloneqq \{1,\dots,k\}$. 
Given functions $f:\mathbb{N}\mapsto \mathbb{R},g:\mathbb{N}\mapsto \mathbb{R}$, we write either $f\ll g$ or $f=o(g)$ if $f(n)/g(n)\rightarrow 0$ as $n\rightarrow \infty$ and $f=O(g)$ if there is a constant $C>0$ such that $f(n)\leq Cg(n)$ for all large enough $n$. We write $f=\Theta(g)$ if $f=O(g)$ and $f=\Omega(g)$, where the latter is the same as $g = O(f)$.
Given random variables $X$ and $Y$, we write $X=_dY$ if $X$ and $Y$ have the same distribution, whereas we write $X\leq_{sd} Y$ if $Y$ stochastically dominates $X$, meaning that $\mathbb{P}(X\ge x)\leq \mathbb{P}(Y\ge x)$ for all $x \in \mathbb{R}$. Moreover, given a $p \ge 0$, some events $A,B$ and a $\sigma$-algebra~$\cal F$, when we say that $\mathbb{P}(A \mid {\cal F}) \ge p$ \emph{on $B$}, then we mean that this inequality is satisfied only when $B$ occurs, that is, $\mathbb{1}_B \cdot \mathbb{P}(A \mid {\cal F}) \ge \mathbb{1}_B \cdot p$.

\section{Preliminaries}\label{prel}

In this section we recall some known results as well as to introduce some tools that will be useful in our proofs. In Section \ref{knownresults} we recall a concentration inequality and a basic result from drift analysis. Subsequently, in Section \ref{estimatecondexp} we collect some useful estimates concerning the (conditional) expected value of $U_{t+1}$ given $U_t$.
Then, in Subsection \ref{driftsubs}, we state and prove a result providing an upper bound on the number of steps required to reduce the number of unhappy particles from some number $\ell$ to some lower number  $h<\ell$.
We conclude by introducing a rather useful relation between the (conditional) distribution of $U_{t+1}$ (given $U_t$) and a branching process with offspring distribution supported on $\{0,2\}$, and we prove a simple lemma that provides tight bounds for the probability that this branching process survives for at least a given number $k\in \mathbb{N}$ generations.

\subsection{Drift Analysis, Martingale Estimates \& Bernoulli's inequalities}
\label{knownresults}

The first fact that we recall here is a result from drift analysis, see \cite{KK19} for an extensive survey.
\begin{thm}[Thm.~10 in \cite{KK19}]\label{driftthm}
Let $(X_t)_{t\in \mathbb{N}_0}$ be a sequence of random variables over $\mathbb{R}$, $x_{min}> 0$ and $T=\inf\{t:X_t< x_{min}\}$. 
Additionally, let $D$ denote any real interval that contains all values $x\ge x_{min}$ that can be taken by $X_t$ for $t\le T$. Furthermore, suppose that $X_0\ge x_{min}$ and $X_t\ge 0$ for all $t\le T$. If there exists a monotonically increasing function $f:D \mapsto \mathbb{R}^+$ such that
\begin{equation*}\label{condition}
    X_t-\mathbb{E}\big[X_{t+1} \mid X_0,\ldots,X_t\big]\geq f(X_t),
    \quad
    t < T,
\end{equation*}
then 
\begin{equation*}
    \mathbb{E}[T\mid X_0]\leq \frac{x_{min}}{f(x_{min})}+\int_{x_{min}}^{X_0}\frac{1}{f(z)}dz.
\end{equation*}
\end{thm}
As it will be quite handy later on, we recall a well-know concentration inequality for supermartingales with bounded increments.
\begin{thm}[Azuma-Hoeffding's inequality, Thm.~23.16 in~\cite{FKIRG}]
\label{azumahoeffding}
Let $(X_t)_{t\in \mathbb{N}_0}$ be a real-valued supermartingale and let $N\in \mathbb{N}$. Suppose that $|X_i-X_{i-1}|\leq c_i$ for all $1\leq i\leq N$. Then
\[
    \mathbb{P}(X_N-X_0\ge b)\leq \exp\left\{-\frac{b^2}{2\sum_{i=1}^{N}c^2_i}\right\},
    \quad
    b \ge 0 .
\]
\end{thm}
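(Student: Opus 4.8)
The plan is to run the classical Chernoff-bound argument for martingales with bounded increments, which is standard (see e.g.~\cite{FKIRG}); I spell out the steps in the order I would carry them out. Let $(\mathcal{F}_t)_{t\in\mathbb{N}_0}$ be the natural filtration of $(X_t)$ and write $Y_i := X_i - X_{i-1}$ for the increments, so that by hypothesis $|Y_i|\le c_i$ and, since $(X_t)$ is a supermartingale, $\mathbb{E}[Y_i\mid\mathcal{F}_{i-1}]\le 0$. Fix $\lambda\ge 0$, to be optimized at the end. Applying Markov's inequality to the non-negative random variable $e^{\lambda(X_N-X_0)}$ gives
\[
    \mathbb{P}(X_N - X_0 \ge b) \;\le\; e^{-\lambda b}\,\mathbb{E}\big[e^{\lambda(X_N - X_0)}\big],
\]
so the task reduces to bounding the moment generating function of $X_N - X_0 = \sum_{i=1}^N Y_i$ from above.

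The key ingredient is a conditional version of Hoeffding's lemma: if $Z$ is a random variable with $|Z|\le c$ and $\mathbb{E}[Z\mid\mathcal{G}]\le 0$ for some $\sigma$-algebra $\mathcal{G}$, then $\mathbb{E}[e^{\lambda Z}\mid\mathcal{G}]\le e^{\lambda^2 c^2/2}$ for every $\lambda\ge 0$. I would prove this using convexity of $x\mapsto e^{\lambda x}$ on $[-c,c]$: writing $z = \frac{c-z}{2c}(-c) + \frac{c+z}{2c}\,c$ yields the pointwise bound $e^{\lambda z} \le \frac{c-z}{2c}e^{-\lambda c} + \frac{c+z}{2c}e^{\lambda c}$; taking conditional expectation and using that $e^{\lambda c} - e^{-\lambda c}\ge 0$ for $\lambda\ge 0$ together with $\mathbb{E}[Z\mid\mathcal{G}]\le 0$ gives $\mathbb{E}[e^{\lambda Z}\mid\mathcal{G}]\le \cosh(\lambda c)\le e^{\lambda^2 c^2/2}$, the last step being a term-by-term comparison of Taylor series.

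Armed with this lemma, I would peel off the increments one at a time. Conditioning on $\mathcal{F}_{N-1}$ and applying the lemma with $Z = Y_N$, $c = c_N$, $\mathcal{G} = \mathcal{F}_{N-1}$,
\[
    \mathbb{E}\big[e^{\lambda(X_N - X_0)}\big] = \mathbb{E}\Big[e^{\lambda(X_{N-1}-X_0)}\,\mathbb{E}\big[e^{\lambda Y_N}\mid\mathcal{F}_{N-1}\big]\Big] \le e^{\lambda^2 c_N^2/2}\,\mathbb{E}\big[e^{\lambda(X_{N-1}-X_0)}\big],
\]
and iterating down to $i=1$ gives $\mathbb{E}[e^{\lambda(X_N-X_0)}]\le \exp\big(\tfrac{\lambda^2}{2}\sum_{i=1}^N c_i^2\big)$. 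Combining this with the Markov bound yields $\mathbb{P}(X_N-X_0\ge b)\le \exp\big(-\lambda b + \tfrac{\lambda^2}{2}\sum_{i=1}^N c_i^2\big)$ for every $\lambda\ge 0$; choosing $\lambda = b/\sum_{i=1}^N c_i^2$ (which is $\ge 0$ since $b\ge 0$) makes the exponent equal to $-b^2/(2\sum_{i=1}^N c_i^2)$, as claimed. The only point requiring a little care is the conditional Hoeffding lemma, specifically keeping track of signs so as to exploit the one-sided supermartingale hypothesis $\mathbb{E}[Y_i\mid\mathcal{F}_{i-1}]\le 0$ rather than an equality; the rest of the argument is routine.
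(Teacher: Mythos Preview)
Your argument is correct and is the standard Chernoff--Hoeffding route to Azuma's inequality; the handling of the supermartingale case via the sign of $\sinh(\lambda c_i)$ is exactly right. Note, however, that the paper does not actually prove Theorem~\ref{azumahoeffding}: it is merely quoted as a well-known concentration inequality with a reference to~\cite{FKIRG}, so there is no ``paper's own proof'' to compare against. Your write-up would serve perfectly well as a self-contained justification if one were desired.
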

\noindent
We will also need the following simple estimates that we will refer to as 'Bernoulli's inequalities'.
\begin{thm}[Bernoulli's inequalities, (5.3.78-79) in~\cite{MR3408971}]\label{thm:Bernoulli}
For $x\in (-1,\infty)$
\begin{align*}
    (1+x)^\alpha \ge 1+\alpha x ~\mbox{ if }~ \alpha\in \mathbb{R}\setminus (0,1)
    ~~~\text{ and }~~~
    (1+x)^{\alpha}  \le 1+\alpha x ~\mbox{ if }~ \alpha \in [0,1].
\end{align*}
\end{thm}

\subsection{Bounds on $\mathbb{E}[U_{t+1} \mid U_t]$}\label{estimatecondexp}
In this section we provide bounds for the (conditional) expected value of~$U_{t+1}$, the number of unhappy particles at the end of step $t+1$, given $U_t$. Recall that we denote by $n$ the number of vertices, by $M$ the total number of particles, whereas $H_t=M-U_t$ is the number of happy particles. Moreover, ${\cal U}_t$ and ${\cal H}_t$ denote the sets of unhappy and happy particles at time $t$. Let
\[
    X_{t+1}\coloneqq \left|\mathcal{H}_{t} \cap \mathcal{U}_{t+1}\right|
    \quad \text{ and} \quad
    Y_{t+1}\coloneqq \left|\mathcal{U}_{t}\cap \mathcal{H}_{t+1}\right|,
\]
that is,  $X_{t+1}$ is the number of particles that were happy at time $t$ but become unhappy during step $t+1$ (because they were reached by some unhappy particles) and $Y_{t+1}$ is the number of unhappy particles at time $t$ that become happy at time $t+1$ (because they move to an unoccupied vertex and none of the other unhappy particles move to that vertex). Then, by definition of the process,
\[
    U_{t+1} - U_t =  X_{t+1}-Y_{t+1}.
\]
The next simple lemma determines $\mathbb{E}[U_{t+1} \mid U_t]$.
\begin{lem}\label{mainexpvalueunhappy}
    Let $t\in \mathbb{N}_0$. Then 
\begin{equation*}
\mathbb{E}\big[U_{t+1} \mid  U_t\big]
= U_t\left(1-\frac{n-H_t}{n}\left(1-\frac{1}{n}\right)^{U_t-1}\right)+H_t\left(1-\left(1-\frac{1}{n}\right)^{U_t}\right).
\end{equation*}
\end{lem}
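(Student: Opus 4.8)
The plan is to start from the identity $U_{t+1}-U_t = X_{t+1}-Y_{t+1}$ recorded just above the statement and to compute $\mathbb{E}[X_{t+1}\mid U_t]$ and $\mathbb{E}[Y_{t+1}\mid U_t]$ separately, in each case writing the quantity as a sum of indicators over the relevant particles and applying linearity of expectation. The one preliminary observation I would make is that, conditionally on $U_t$, the state at time $t$ always has the same shape (up to an irrelevant relabelling of vertices): there are $H_t = M-U_t$ happy particles occupying $H_t$ pairwise distinct vertices, and the $U_t$ unhappy particles are about to jump to i.i.d.\ uniformly random vertices of $[n]$. Consequently the conditional law of $(X_{t+1},Y_{t+1})$ given $\mathcal{U}_t$ depends on $\mathcal{U}_t$ only through $U_t$, so it is legitimate to condition on $U_t$ alone, and in what follows ``the happy vertices'' form a fixed set of size $H_t$.

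To handle $Y_{t+1}$, I would fix an unhappy particle $p_i$ and let $v$ be its (uniform) destination. Conditioning on $v$: the vertex $v$ carries no happy particle with probability $(n-H_t)/n$ (the happy vertices being distinct), and, independently, each of the $U_t-1$ other unhappy particles misses $v$ with probability $1-\tfrac1n$, these being independent because the destinations of distinct unhappy particles are independent. Hence $\mathbb{P}(p_i\in\mathcal{H}_{t+1}\mid U_t) = \tfrac{n-H_t}{n}\bigl(1-\tfrac1n\bigr)^{U_t-1}$, and summing over the $U_t$ unhappy particles gives $\mathbb{E}[Y_{t+1}\mid U_t] = U_t\cdot\tfrac{n-H_t}{n}\bigl(1-\tfrac1n\bigr)^{U_t-1}$. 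For $X_{t+1}$, I would fix a happy particle $p_j$ on some vertex $w$; it becomes unhappy exactly when at least one unhappy particle lands on $w$, which fails with probability $\bigl(1-\tfrac1n\bigr)^{U_t}$ since the $U_t$ unhappy particles move independently. Summing over the $H_t$ happy particles yields $\mathbb{E}[X_{t+1}\mid U_t] = H_t\bigl(1-(1-\tfrac1n)^{U_t}\bigr)$. Substituting both into $\mathbb{E}[U_{t+1}\mid U_t] = U_t - \mathbb{E}[Y_{t+1}\mid U_t] + \mathbb{E}[X_{t+1}\mid U_t]$ produces exactly the claimed identity.

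There is no real obstacle here beyond careful bookkeeping: the only point deserving attention is the factorisation of the two ``collision'' events contributing to $Y_{t+1}$ — a tagged unhappy particle hitting a happy vertex versus a tagged unhappy particle being hit by another unhappy particle — which becomes transparent once one conditions on the tagged particle's destination and uses that the happy vertices are distinct and fixed while distinct unhappy particles move independently. Everything else is linearity of expectation.
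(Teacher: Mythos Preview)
Your proposal is correct and follows essentially the same approach as the paper: both proofs use the decomposition $U_{t+1}=U_t+X_{t+1}-Y_{t+1}$ and compute $\mathbb{E}[X_{t+1}\mid U_t]$ and $\mathbb{E}[Y_{t+1}\mid U_t]$ via linearity of expectation over individual particles, obtaining the same per-particle probabilities $\tfrac{n-H_t}{n}(1-\tfrac1n)^{U_t-1}$ and $1-(1-\tfrac1n)^{U_t}$. Your additional remark that the conditional law depends on $\mathcal{U}_t$ only through $U_t$ is a nice justification that the paper leaves implicit.
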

\begin{proof}
An unhappy particle $p$ at time $t$ is happy at time $t+1$ with probability 
$\frac{n-H_t}{n}(1-1/n)^{U_t-1}$, which is the probability that $p$ jumps to one of the $n-H_t$ unoccupied vertices, say vertex $v_p$, and no other particle in $\mathcal{U}_{t}$ jumps to $v_p$. So,
\[
    \mathbb{E}\big[Y_{t+1} \mid U_t\big]
    = U_t\frac{n-H_t}{n}\left(1-\frac{1}{n}\right)^{U_t-1}.\]
Similarly,
\[
    \mathbb{E}\big[X_{t+1} \mid U_t\big]
    = H_t\left(1-\left(1-\frac{1}{n}\right)^{U_t}\right),
\]
because a happy particle $p$ at time $t$ becomes unhappy at time $t+1$ if at least one particle in 
$\mathcal{U}_{t}$ jumps onto $v_p$, 
an event occurring with probability 
$1-(1-1/n)^{U_t}$.
\end{proof}
The next two lemmas provide handy bounds for $\mathbb{E}[U_{t+1} \mid U_t]$.
\begin{lem}\label{expunhappyupper}
Let $M=(1+\varepsilon)n/2$ with $|\varepsilon| = o(1)$ and $|\varepsilon| < 1/9$. Then, for $n \ge 100$,
\begin{equation*}
    \mathbb{E}\big[U_{t+1} \mid U_t\big]\geq (1+\varepsilon)U_t-\frac{7U^2_t}{2n}
    \quad
    \text{and}
    \quad
    \mathbb{E}\big[U_{t+1} \mid U_t\big]\geq \frac{1+\varepsilon}{3}U_t,
    \qquad
    t\in\mathbb{N}_0.
\end{equation*}
\end{lem}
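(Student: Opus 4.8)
The plan is to start from the exact expression of Lemma~\ref{mainexpvalueunhappy} and rewrite it with two visibly non-negative summands. Put $a\coloneqq(1-1/n)^{U_t-1}$, so that $(1-1/n)^{U_t}=a(1-1/n)$; substituting this into the expression of Lemma~\ref{mainexpvalueunhappy}, using $H_t=M-U_t$, and simplifying the resulting bracket (which equals $[nM-H_t(U_t+1)]/n$), one obtains
\[
    \mathbb{E}[U_{t+1}\mid U_t]=M(1-a)+a\,\frac{H_t(U_t+1)}{n}.
\]
All subsequent estimates use this identity, the elementary bounds $1-\tfrac kn\le(1-1/n)^k\le e^{-k/n}\le 1-\tfrac kn+\tfrac{k^2}{2n^2}$ for $k\in\mathbb N_0$, and the fact that $M=(1+\varepsilon)n/2\le n$ for large $n$ (since $|\varepsilon|=o(1)$).

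For the first inequality I bound the two summands of the identity separately. To $M(1-a)$ I apply $a\le 1-\tfrac{U_t-1}{n}+\tfrac{(U_t-1)^2}{2n^2}$; to $a\,\tfrac{H_t(U_t+1)}{n}$ I apply $a\ge 1-\tfrac{U_t-1}{n}$, keeping the factor $U_t+1$ intact rather than weakening it to $U_t$ (doing the latter would throw away a term of order $H_t/n=\Theta(1)$, which is too lossy when $U_t$ is small). Substituting $H_t=M-U_t$ and $M=(1+\varepsilon)n/2$ and multiplying out, the two leading contributions combine to $\tfrac{2M}{n}U_t=(1+\varepsilon)U_t$, the two $\Theta(1)$ pieces ($\pm M/n$) cancel exactly, and the leftover error is $\tfrac{U_t^2}{n}+\tfrac{U_t}{n}+\tfrac{MU_t^2}{n^2}+\tfrac{M(U_t-1)^2}{2n^2}$, which is at most $\tfrac{7U_t^2}{2n}$ using $M\le n$ and $U_t\le U_t^2$. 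This establishes the first inequality.

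For the second inequality I split on the size of $U_t$. If $U_t\le\tfrac{4(1+\varepsilon)}{21}n$, then $\tfrac{7U_t^2}{2n}\le\tfrac23(1+\varepsilon)U_t$, so the first inequality already gives $\mathbb{E}[U_{t+1}\mid U_t]\ge(1+\varepsilon)U_t-\tfrac23(1+\varepsilon)U_t=\tfrac{1+\varepsilon}{3}U_t$. Otherwise $U_t>\tfrac{4(1+\varepsilon)}{21}n$; here I return to the identity, use $U_t+1\ge U_t$, divide by $U_t$, and use $U_t\le M$. Writing $s\coloneqq U_t/n$, so that $M/U_t=\tfrac{1+\varepsilon}{2s}$, $H_t/n=\tfrac{1+\varepsilon}{2}-s\ge 0$, and $s\in[\tfrac{4(1+\varepsilon)}{21},\tfrac{1+\varepsilon}{2}]$, and noting that $a=(1-1/n)^{U_t-1}=e^{-s}(1+O(1/n))$ uniformly on this range (since $s$ is bounded away from $0$), it suffices to verify
\[
    \frac{1+\varepsilon}{2}\cdot\frac{1-e^{-s}}{s}+e^{-s}\Big(\frac{1+\varepsilon}{2}-s\Big)\;\ge\;\frac{1+\varepsilon}{3}
    \qquad\text{for all }s\in\Big[\tfrac{4(1+\varepsilon)}{21},\tfrac{1+\varepsilon}{2}\Big],
\]
with a fixed margin so that the $O(1/n)$ discrepancy is harmless for large $n$. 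This is a one-variable inequality on a fixed compact interval; the left-hand side is decreasing in $s$ (a one-line calculus check), so its minimum is $1-e^{-(1+\varepsilon)/2}\approx 1-e^{-1/2}\approx0.393$, comfortably above $\tfrac{1+\varepsilon}{3}$ for small $\varepsilon$.

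The algebraic rearrangement and the error bookkeeping are routine. The only delicate point is the second inequality in the intermediate range $U_t=\Theta(n)$: there the first inequality is useless because its quadratic correction dominates the linear term, whereas the crude estimate $\mathbb{E}[U_{t+1}\mid U_t]\ge M(1-a)$ only yields $\mathbb{E}[U_{t+1}\mid U_t]/U_t\ge 1-a\ge\tfrac13$ once $U_t\gtrsim 0.4 n$, so there is a genuine gap in which both summands of the identity must be retained simultaneously. It is precisely the structural constraint $U_t\le M=(1+\varepsilon)n/2$ — equivalently $s\le(1+\varepsilon)/2<0.51$ — that keeps the displayed function above $1/3$; the same expression drops below $1/3$ near $s=0.7$, so the bound would fail without this constraint. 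Some minor extra care with the $\pm1$ shifts in the exponents is needed near $U_t=2$, but there the first inequality already suffices.
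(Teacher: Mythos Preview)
Your proof is correct. Both you and the paper start from Lemma~\ref{mainexpvalueunhappy} and rewrite it as a sum of two visibly non-negative pieces, then bound the factors $(1-1/n)^k$ by one- and two-term Taylor estimates; in that sense the approaches coincide. The differences are minor: (i) your rearrangement $M(1-a)+a\,H_t(U_t+1)/n$ with $a=(1-1/n)^{U_t-1}$ is non-negative term by term for all $H_t\ge0$, whereas the paper's form $M[1-(1-1/n)^{U_t}]+U_t(1-1/n)^{U_t}(H_t-1)/(n-1)$ forces a separate treatment of $H_t=0$; (ii) for the second inequality the paper is very terse in the case $H_t\ge1$ --- there it is enough to drop the second (non-negative) summand of its intermediate bound and use $U_t\le M$, giving $\mathbb{E}[U_{t+1}\mid U_t]\ge M\tfrac{U_t}{n}(1-\tfrac{U_t}{2n})\ge\tfrac{(1+\varepsilon)(3-\varepsilon)}{8}U_t\ge\tfrac{1+\varepsilon}{3}U_t$ --- whereas you split on the size of $U_t$ and carry out an explicit one-variable calculus check for $U_t>\tfrac{4(1+\varepsilon)}{21}n$. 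Your route is slightly more elaborate than necessary at that point, but it is entirely sound and spells out a step the paper leaves implicit.
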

\begin{proof}
Recalling Lemma~\ref{mainexpvalueunhappy} and using $M = U_t + H_t$ we obtain 
\begin{align}\label{pghstar}
\mathbb{E}\big[U_{t+1} \mid U_t\big]
        \nonumber&=
        U_t\left(1-\frac{n-H_t}{n}\left(1-\frac{1}{n}\right)^{U_t-1}\right)
        + H_t\left(1-\left(1-\frac{1}{n}\right)^{U_t}\right) \\
	& = M\left[1-\left(1-\frac{1}{n}\right)^{U_t}\right]
	    + U_t\left(1-\frac{1}{n}\right)^{U_t} \frac{H_t-1}{n-1}.
	\end{align}
Suppose first that $H_t=0$ (that is, $U_t=M$). Note that, in this case, the first lower bound in the lemma follows immediately from the observation
\[
    (1+\varepsilon)U_t - \frac{7U_t^2}{2n}
= (1+\varepsilon)M-\frac{7}{4}(1+\varepsilon)M
    < 0.
\]
In order to prove the second inequality, note that the expression in~\eqref{pghstar} equals
\begin{equation*}
    M\left[1-\left(1-\frac{1}{n}\right)^{M}\right]
	    -\frac{M}{n-1}\left(1-\frac{1}{n}\right)^{M}.
\end{equation*}
Using $(1-1/n)^M \le e^{-M/n} \le e^{-4/9}$ and $(1-1/n)^M \le 1$ (in this order)
we obtain for all $n\ge 100$ that this is at least
\[
    M\left[1-e^{-4/9} \right]-\frac{M}{n-1}
    = M\left[1-e^{-4/9}-\frac{1}{99}\right]
    \ge \frac{M}3,
\]
where the last step can be verified numerically.

Next we turn to the main case $H_t\geq 1$.
Using $1+x \le e^x$ for $x\in \mathbb{R}$ and $e^{-x}\leq 1-x+x^2/2$ for $x\geq 0$ (in this order) we infer 
	\[
	    1-\left(1-\frac{1}{n}\right)^{U_t}
	    \ge 1 - e^{-U_t/n}\geq \frac{U_t}{n}\left(1-\frac{U_t}{2n}\right).
	\]
Moreover, using Bernoulli's inequality together with the obvious bound $(n-1)^{-1}>n^{-1}$ implies, since $H_t \ge 1$,
\[
    U_t\left(1-\frac{1}{n}\right)^{U_t}\frac{H_t-1}{n-1}
    \ge U_t\left(1-\frac{U_t}{n}\right)\frac{H_t-1}{n}.
\]
By substituting the last two bounds into~\eqref{pghstar} we obtain 
    \begin{equation}\label{septermsstar}
        \mathbb{E}\big[U_{t+1} \mid U_t\big]
        \geq M\frac{U_t}{n}\left(1-\frac{U_t}{2n}\right)
            + U_t\left(1-\frac{U_t}{n}\right)\frac{H_t-1}{n}.
    \end{equation}
Since $M=(1+\varepsilon)n/2$ and $|\varepsilon| < 1/9$, we readily see that
\[
    M\frac{U_t}{n}\left(1-\frac{U_t}{2n}\right)\geq \frac{1+\varepsilon}{2}U_t-\frac{U^2_t}{2n}.
\]
Moreover, since $H_t-1=M-U_t-1$, $M = (1+\varepsilon)n/2$ and $|\varepsilon| < 1/9$, we immediately obtain 
\[
    U_t\left(1-\frac{U_t}{n}\right)\frac{H_t-1}{n}
    \ge \frac{M}{n}U_t\left(1-\frac{U_t}{n}\right)-\frac{U^2_t}{n}-\frac{U_t}{n}\ge
    \frac{1+\varepsilon}{2}U_t-\frac{3U^2_t}{n}.
\]
Substituting these last two estimates back into \eqref{septermsstar} yields the first statement of the lemma. For the second statement, going back to (\ref{septermsstar}) we see that, since $U_t\leq M=(1+\varepsilon)/2$,
    \[\mathbb{E}\big[U_{t+1} \mid U_t\big]
        \geq M\frac{U_t}{n}\left(1-\frac{U_t}{2n}\right)\geq  \frac{1+\varepsilon}{2}U_t\left(1-\frac{1+\varepsilon}{4}\right)\geq \frac{1+\varepsilon}{3}U_t,\]
    where the last inequality follows from our assumption $|\varepsilon| < 1/9$. This concludes the proof.
\end{proof}
Next we state an upper bound for the (conditional) expected value of $U_{t+1}$ given $U_t$.
\begin{lem}\label{expunhappy}
    Let $|\varepsilon|\le 1$ and $M=(1+\varepsilon)n/2$. Then
\begin{equation*}
	\mathbb{E}\big[U_{t+1} \mid U_t\big]
    \leq (1 + \varepsilon)U_t - \frac{U^2_t}n.
\end{equation*}
\end{lem}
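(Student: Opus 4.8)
The plan is to start from the exact formula in Lemma~\ref{mainexpvalueunhappy} (equivalently, from its rewritten form~\eqref{pghstar}) and bound each of the two terms from above, this time using the \emph{reverse} inequalities to the ones used in Lemma~\ref{expunhappyupper}. Concretely, I would work with
\[
    \mathbb{E}\left[U_{t+1} \mid U_t\right]
    = M\left[1-\left(1-\tfrac{1}{n}\right)^{U_t}\right]
      + U_t\left(1-\tfrac{1}{n}\right)^{U_t}\frac{H_t-1}{n-1}.
\]
For the first term, the natural bound is $1-(1-1/n)^{U_t}\le U_t/n$, which follows from $(1-x)^N\ge 1-xN$; multiplying by $M=(1+\varepsilon)n/2$ gives a contribution of at most $(1+\varepsilon)U_t/2$. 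For the second term I would bound $(1-1/n)^{U_t}\le 1$ and $H_t-1\le H_t = M-U_t$, and use $(n-1)^{-1}\le$ something comparable to $n^{-1}$ — though here one must be slightly careful, since naively replacing $n-1$ by $n$ goes the wrong way for an upper bound. One clean route is to keep $H_t-1 = M - U_t - 1$ and write $\frac{M-U_t-1}{n-1} \le \frac{M-U_t}{n-1}$, then argue that for the ranges of $U_t$ that matter this is at most $\frac{M}{n}U_t \cdot(\text{correction})$; summing the two pieces should yield $(1+\varepsilon)U_t - U_t^2/n$ after the $M/n = (1+\varepsilon)/2$ substitution, with the two ``$U_t^2/(2n)$'' halves combining.

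The main subtlety — and the step I would treat most carefully — is handling the edge cases and the direction of the $(n-1)$ versus $n$ estimate, since unlike the lower-bound lemma we cannot afford to be sloppy in a way that loses an upper bound. In particular the bound must hold for \emph{all} $U_t \in \{0,1,\dots,M\}$, including $U_t$ close to $M$ where $H_t$ is small or zero; when $H_t = 0$ the second term is negative (it is $-\frac{U_t}{n-1}(1-1/n)^{U_t}$), so the inequality is only easier there, but I would check that the claimed bound $U_t(1+\varepsilon) - U_t^2/n$ is genuinely an upper bound when $U_t = M$ by direct substitution. The trivial cases $U_t = 0$ and $U_t = 1$ should also be verified separately (for $U_t=0$ both sides are $0$; for $U_t = 1$ the particle moves and the formula collapses nicely).

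Assembling: after the substitution $M = (1+\varepsilon)n/2$, the first-term bound contributes $\frac{1+\varepsilon}{2}U_t$; the second-term bound, using $H_t - 1 = M - U_t - 1 \le \frac{(1+\varepsilon)n}{2} - U_t$ and a careful comparison of $\frac{1}{n-1}$ with $\frac1n$, should contribute at most $\frac{1+\varepsilon}{2}U_t - \frac{U_t^2}{n}$ (the $-U_t^2/n$ coming from the $-U_t \cdot U_t$ cross term in $(M-U_t)U_t$, plus the fact that the $(1-1/n)^{U_t} \le 1$ bound only helps). Adding the two gives exactly $(1+\varepsilon)U_t - U_t^2/n$, as desired. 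I expect no genuine obstacle here — the lemma is a routine one-sided estimate — but the bookkeeping around $n-1$ vs.\ $n$ is the one place where a careless sign flip would break the argument, so that is where I would focus.
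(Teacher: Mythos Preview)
Your proposal is correct. The step you flagged as delicate --- comparing $\frac{H_t-1}{n-1}$ with $\frac{H_t}{n}$ --- in fact works out cleanly: the inequality $\frac{H_t-1}{n-1}\le \frac{H_t}{n}$ is equivalent to $H_t\le n$, which always holds since $H_t\le M\le n$. So from \eqref{pghstar} your two bounds give $\frac{1+\varepsilon}{2}U_t$ for the first term and $U_t\cdot\frac{M-U_t}{n}=\frac{1+\varepsilon}{2}U_t-\frac{U_t^2}{n}$ for the second, summing to the claim; the $H_t=0$ case is handled exactly as you say.

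The paper takes a slightly different and somewhat slicker route that sidesteps the $n-1$ bookkeeping entirely: it works directly from the formula of Lemma~\ref{mainexpvalueunhappy} (not the rewritten form~\eqref{pghstar}), first weakens $(1-1/n)^{U_t-1}$ to $(1-1/n)^{U_t}$, then applies the single inequality $(1-1/n)^{U_t}\ge 1-U_t/n$ to both occurrences, and expands. Since the original formula has $\frac{n-H_t}{n}$ rather than $\frac{H_t-1}{n-1}$, no denominator comparison is needed; after expanding and substituting $2H_t+U_t=(1+\varepsilon)n-U_t$ one obtains $(1+\varepsilon)U_t-\frac{U_t^2}{n}-\frac{H_tU_t^2}{n^2}$, and dropping the last (nonnegative) subtracted term yields the claim. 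Your route trades this algebraic cancellation for the extra inequality $(1-1/n)^{U_t}\le 1$ plus the $\frac{H_t-1}{n-1}\le\frac{H_t}{n}$ observation; both arguments are of the same length and difficulty.
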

\begin{proof}
If $U_t = 0$ then the statement is obvious, so we assume throughout that $U_t \geq 1$. Using Lemma~\ref{mainexpvalueunhappy} we obtain
\begin{align}\label{firstbound}
\nonumber\mathbb{E}\left[U_{t+1} \mid U_t\right]
&\leq U_t-U_t\frac{n-H_t}{n}\left(1-\frac{1}{n}\right)^{U_t}+H_t-H_t\left(1-\frac{1}{n}\right)^{U_t}.
\end{align}
Bernoulli's inequality implies $(1-1/n)^{U_t}\geq 1 - U_t/n$, and hence
\begin{equation*}
\label{firstbound}
\nonumber\mathbb{E}\left[U_{t+1} \mid U_t\right]
\leq U_t-U_t\left(1-\frac{H_t}n\right)\left(1 - \frac{U_t}n\right) + \frac{H_tU_t}n
 = U_t\left(\frac{2H_t + U_t}n - \frac{H_tU_t}{n^2}\right).
\end{equation*}
Since $2H_t + U_t = 2M - U_t = (1+\varepsilon)n - U_t$ we obtain the claimed bound.
\end{proof}

\subsection{Reducing the Number of Unhappy Particles}
\label{driftsubs}

Here we show a lemma that provides information on the number of steps required to  reduce the number of unhappy particles from some number $\ell \leq M$ to some $h< \ell$.
\begin{lem}\label{drifanalysislemma}
    Let $n\in\mathbb{N}$, $t_0\in \mathbb{N}_0$, $|\varepsilon |<1$, $M = (1+\varepsilon)n/2 \in \mathbb{N}$ and $h \geq \max\{2\varepsilon n,1\}>0$. Set $\tau_h\coloneqq \inf \{t\geq t_0: U_t< h \}$. Then, for any $b>0$ and for every $\ell\ge h$ 
	\begin{equation}\label{splitcases}
	    \mathbb{P}\big(\tau_h - t_0 > b \mid U_{t_0}=\ell\big)
        \leq \frac{4n}{bh}.
	\end{equation}
\end{lem}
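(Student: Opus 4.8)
The natural approach is to apply the drift theorem (Theorem~\ref{driftthm}) to a stopped and truncated version of the process, and then convert the resulting bound on $\mathbb{E}[\tau - t_0 \mid U_{t_0} = \ell]$ into a tail bound via Markov's inequality. First I would define the auxiliary chain $X_s \coloneqq U_{t_0+s}\,\mathbbm{1}\{U_{t_0+s} > h\}$ for $s \in \mathbb{N}_0$ — equivalently, run the unhappy-particle chain but collapse every state at or below $h$ to $0$. Then $X_s$ hits $0$ exactly at time $\tau - t_0$, its state space is a finite subset of $\mathbb{R}^+_0$ containing $0$, and the smallest nonzero element $s_m$ is at most $h+1$ (in fact it is the smallest value $>h$ attainable). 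The point of the truncation is that we only need a drift lower bound for states $s > h \geq \max\{\lceil 2\varepsilon n\rceil, 2\}$, which is precisely the regime where Lemma~\ref{expunhappyupper} is useful.

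The key computation is to produce the increasing function $f$ in~\eqref{condition}. For a state $s > h$ we have, using Lemma~\ref{expunhappyupper} (the first bound) and then Lemma~\ref{expunhappy} would only give an upper bound on the drift, so for the \emph{lower} bound on $\mathbb{E}[X_s - X_{s+1}]$ I need the upper bound on $\mathbb{E}[U_{t+1}\mid U_t]$ from Lemma~\ref{expunhappy}: namely
\[
    \mathbb{E}[U_t - U_{t+1} \mid U_t = s] \geq \frac{s^2}{n} - \varepsilon s \geq \frac{s^2}{2n},
\]
where the last inequality uses $s > h \geq 2\varepsilon n$, so $\varepsilon s \leq s^2/(2n)$. (A minor subtlety: one must check that collapsing small states to $0$ does not decrease the drift, i.e. that $\mathbb{E}[X_s - X_{s+1} \mid X_s = s] \geq \mathbb{E}[U_{t+1}^{\le} ]$ where we only help ourselves by mapping $U_{t+1} \le h$ to $0$ rather than to its true value; since $U_{t+1} \geq 0$ always, replacing $U_{t+1}$ by $0$ on the event $\{U_{t+1}\le h\}$ only increases $X_s - X_{s+1}$, so the drift bound $f(s) = s^2/(2n)$ still holds, and $f$ is increasing on $\mathbb{R}^+$ as required.) Applying Theorem~\ref{driftthm} with $f(x) = x^2/(2n)$ gives
\[
    \mathbb{E}[\tau - t_0 \mid U_{t_0} = \ell]
    \leq \frac{s_m}{f(s_m)} + \mathbb{E}\!\left[\int_{s_m}^{\ell} \frac{2n}{x^2}\,dx\right]
    = \frac{2n}{s_m} + 2n\left(\frac{1}{s_m} - \frac{1}{\ell}\right)
    \leq \frac{4n}{s_m} \leq \frac{4n}{h},
\]
using $s_m > h$. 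This is already of the right order; one then finishes with Markov's inequality: $\mathbb{P}(\tau - t_0 > b \mid U_{t_0}=\ell) \leq \frac{4n/h}{b}$.

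The only gap between this and the stated bound $\frac{1 + \log h + 2n/h}{b}$ is the constant and the extra $1 + \log h$ terms, which suggests the authors use a slightly different (more refined) drift function $f$ — perhaps $f(x) = \min\{x/3, x^2/(7n)\}$-type bound coming from the \emph{second} inequality in Lemma~\ref{expunhappyupper} combined with the first, to get a better constant in the $\int 1/f$ term and a genuine $\log$ contribution when $f$ is linear on part of the range. Concretely, I would instead use $f(x) = \frac{x^2}{7n}$ for $x \geq$ some threshold where the quadratic term dominates and the linear-in-$x$ drift lower bound elsewhere, producing $\int_{s_m}^\ell \frac{dx}{f(x)}$ equal to a $\log$ term plus a $n/h$ term; the $s_m/f(s_m)$ term contributes the $O(1)$. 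I expect the main obstacle to be purely bookkeeping: choosing the crossover point for $f$ and tracking constants so that the sum of the three pieces is exactly $\leq 1 + \log h + 2n/h$. There is no conceptual difficulty once the truncation device is in place — the only thing to be careful about is that Theorem~\ref{driftthm} requires $f$ to be increasing (so one cannot literally take $f(x) = \min\{\text{linear}, \text{quadratic}/n\}$ unless that min is increasing, which it is since both pieces increase) and defined on all of $\mathbb{R}^+$, and that the truncation genuinely preserves the drift inequality for every relevant state.
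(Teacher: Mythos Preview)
Your overall architecture---truncate the chain below $h$, apply Theorem~\ref{driftthm} with a drift lower bound coming from Lemma~\ref{expunhappy}, then Markov---is exactly the paper's approach, and your computation yielding $\mathbb{E}[\tau-t_0]\le 4n/h$ is correct. That bound is of the right order (and in fact suffices for every application of the lemma later in the paper), but it is not the bound $1+\log h+2n/h$ in the statement, and the two are incomparable in general.

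Your speculation about how to recover the $1+\log h$ is the one place you go wrong. The linear piece of $f$ does \emph{not} come from Lemma~\ref{expunhappyupper}; that lemma gives a \emph{lower} bound on $\mathbb{E}[U_{t+1}\mid U_t]$, hence an \emph{upper} bound on the drift, which is useless here. The paper's trick is purely in the truncation: instead of collapsing $\{1,\dots,h\}$ directly to $0$ as you do, it defines $W_t$ so that once $W_t\in\{1,\dots,h\}$ the process jumps to $0$ \emph{on the next step}. Thus states in $\{1,\dots,h\}$ are kept in the state space with drift exactly $s$ (since $W_{t+1}=0$ deterministically). This makes the smallest nonzero state $s_m=1$ rather than $s_m\approx h+1$, and one takes $f(x)=x$ on $[1,h]$ and $f(x)=x^2/n-\varepsilon x$ on $(h,\infty)$. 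Then $s_m/f(s_m)=1$, $\int_1^h dx/x=\log h$, and $\int_h^M dx/(x^2/n-\varepsilon x)\le\int_h^M 2n/x^2\,dx\le 2n/h$ (using $x\ge h\ge 2\varepsilon n$ to get $x^2/n-\varepsilon x\ge x^2/(2n)$), which assembles to the stated bound. So the missing idea is just this one-step-delayed truncation; once you make that change your argument goes through verbatim with the paper's constants.
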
 
\begin{proof}
By Lemma~\ref{expunhappy}, for every $t\ge t_0$ we have  
\[\mathbb{E}\left[U_t-U_{t+1} \mid  U_t=x\right]\geq x^2/n-\varepsilon x.\]
It follows from Theorem \ref{driftthm}, applied to the monotone increasing function
    $f:[h,\infty)\to \mathbb{R}^+, x\mapsto  x^2/2-\varepsilon x$ and $x_{\min} = h$, that
	\begin{equation*}
	\mathbb{E}[\tau_h-t_0]\leq \frac{h}{f(h)}+ \int_{h}^{M}\frac{1}{f(x)}dx.
	\end{equation*}
    For every $x\ge h\ge 2\varepsilon n $ we have $\varepsilon n/x\leq 1/2$ and so
	\begin{equation*}
	  f(x)=\frac{x^2}{n}-\varepsilon x=\frac{x^2}{n}\left(1-\frac{\varepsilon n}{x}\right)\geq \frac{x^2}{2n}, \quad x \ge h.
  \end{equation*}
    Therefore we arrive at
    \[
\mathbb{E}[\tau_h-t_0]\leq \frac{h}{f(h)}+ \int_{h}^{M}\frac{1}{f(x)}dx\le \frac{2n}{h}+\int_{h}^{M}\frac{2n}{x^2} dx\le \frac{4n}{h}.
    \]
The result follows by applying Markov's inequality.
\end{proof}

\subsection{Branching Process Approximation}\label{couplingfacts}
In this subsection we introduce a process that we will use in order to obtain upper and lower couplings for the process of unhappy particles. For $|\kappa|<1$ and some random variable $N \in \mathbb{N}$ define a stochastic process $(Z_t(N,\kappa))_{t\in \mathbb{N}_0}$ by
\begin{equation}
\label{eq:defBinProcess}
    Z_0(N,\kappa) = N
    \quad
    \text{and}
    \quad
    Z_t(N, \kappa)=_d 2 \, \text{Bin}\big(Z_{t-1}(N,\kappa),(1+\kappa)/2\big), ~ t\in\mathbb{N}.
\end{equation}
We call $(Z_t(N,\kappa))_{t\in \mathbb{N}_0}$ the \textit{binomial process} in this paper. We want to stress that the process is well defined even if $\kappa < 0$ (as long as $\kappa > -1$). When considering $Z_t(N,\kappa)$ we sometimes drop the dependence on $N$ or on $\kappa$, if it is clear from the context or otherwise convenient.
In the next lemma we show how to control the number of unhappy particles at some given step in terms of the binomial process.
\begin{lem}\label{lem:uppercoupling}
    Let $|\varepsilon| < 1$. 
    Consider the dispersion process with $M=(1+\varepsilon)n/2$ particles.
    Then
    $$U_{t_0+t}\le_{sd} Z_t(U_{t_0},\varepsilon), \quad t,t_0\in\mathbb{N}_0.$$
    \end{lem}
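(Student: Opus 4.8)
The plan is to prove the stochastic domination $U_{t_0+t} \le_{sd} Z_t(U_{t_0},\varepsilon)$ by induction on $t$, and at the heart of the argument is a one-step coupling: conditioned on the current configuration, I will build a coupling of $U_{t_0+t+1}$ and one step of the binomial process so that the former is pointwise at most the latter. The key combinatorial observation is that at any time step, each of the $U_s$ currently unhappy particles jumps to a uniformly random vertex, and such a particle can only end up unhappy if it collides with at least one of the other $M-1$ particles; the probability of this (for a fixed particle, conditioned on everything else) is at most $(M-1)/n \le M/n = (1+\varepsilon)/2$. Moreover, every newly unhappy happy particle must have been hit by some currently unhappy particle, so the total number of unhappy particles at time $s+1$ is at most \emph{twice} the number of currently unhappy particles that collide with something — each collision contributes the moving particle plus (at most) one resident it reactivated. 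This is exactly why the factor $2$ and the binomial with $U_s$ trials appear.

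Concretely, for the induction step I would proceed as follows. Fix the history up to time $s = t_0+t$ and condition on $U_s = u$ (and more generally on $\mathcal{U}_s,\mathcal{H}_s$). Enumerate the unhappy particles $q_1,\dots,q_u$ and reveal their destinations one at a time. For particle $q_i$, let $E_i$ be the event that $q_i$ lands on a vertex already containing, or to be occupied by, another particle; by a union bound over the other $M-1$ particles (whose positions, for the happy ones, are fixed, and for the already-revealed and not-yet-revealed unhappy ones are uniform), $\Prob(E_i \mid \text{past}) \le (M-1)/n \le (1+\varepsilon)/2$. I can then introduce auxiliary independent Bernoulli$\big((1+\varepsilon)/2\big)$ variables $\xi_1,\dots,\xi_u$ coupled so that $\mathbbm{1}_{E_i}\le \xi_i$; this uses $|\varepsilon|<1$ so that $(1+\varepsilon)/2 \in (0,1)$ is a valid probability. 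Now every unhappy particle at time $s+1$ is either one of the $q_i$ with $E_i$ occurring, or a formerly happy particle hit by such a $q_i$; in the latter case we may charge it to the $q_i$ that hit it (picking one arbitrarily if several did), and each $q_i$ is charged at most once. Hence $U_{s+1}\le 2\sum_{i=1}^{u}\mathbbm{1}_{E_i}\le 2\sum_{i=1}^u \xi_i =_d 2\,\text{Bin}(u,(1+\varepsilon)/2)$. This gives $U_{s+1}\le_{sd} 2\,\text{Bin}(U_s,(1+\varepsilon)/2)$ conditionally on $U_s$.

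To finish, I combine the conditional one-step bound with the induction hypothesis $U_{t_0+t}\le_{sd} Z_t(U_{t_0},\varepsilon)$. Since $u\mapsto 2\,\text{Bin}(u,(1+\varepsilon)/2)$ is stochastically monotone in $u$ (a standard fact, e.g.\ via a coupling that adds trials), stochastic domination is preserved under this map, and then under the subsequent (further) one-step evolution; formally one can realize everything on a common probability space using the $\xi_i$'s and a monotone coupling of $\text{Bin}(u,\cdot)$ across values of $u$, then take expectations of indicator functions $\mathbbm{1}_{\{\,\cdot\,\ge x\}}$. The base case $t=0$ (or $t=1$, matching the lemma's range $t\in\mathbb{N}$) is either trivial ($U_{t_0}=Z_0(U_{t_0},\varepsilon)$) or the single one-step bound just established.

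The main obstacle, and the only place demanding genuine care, is the sequential-revelation argument for $\Prob(E_i\mid\text{past})\le (M-1)/n$ together with the charging scheme showing $U_{s+1}\le 2\sum_i \mathbbm{1}_{E_i}$: one must be careful that when revealing $q_i$'s position, the bound holds conditionally on all previously revealed positions \emph{and} on the fixed positions of happy particles, and that the "charge each reactivated happy particle to one hitting unhappy particle" map is well-defined and at most one-to-one per $q_i$ (a particle landing alone on an empty vertex contributes $0$; a particle in a collision contributes itself, counted by $\mathbbm{1}_{E_i}$, plus at most one charged resident). Everything else — monotonicity of the binomial map, preservation of $\le_{sd}$ under monotone maps, the induction bookkeeping — is routine.
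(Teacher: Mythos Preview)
Your overall strategy --- induction on $t$ together with a one-step bound $U_{s+1}\le 2\sum_i\mathbb{1}_{E_i}\le_{sd}2\,\text{Bin}(U_s,(1+\varepsilon)/2)$ obtained by revealing the unhappy particles' destinations sequentially --- is exactly the paper's approach, and your induction/monotonicity bookkeeping is fine. There is, however, a genuine gap in the one-step coupling. You define $E_i$ to include collisions with the \emph{not-yet-revealed} particles $q_{i+1},\dots,q_u$ (``or to be occupied by''). With this definition $E_i$ is not $\sigma(q_1,\dots,q_i)$-measurable, so the (correct) bound $\Prob(E_i\mid q_1,\dots,q_{i-1})\le(M-1)/n$ does \emph{not} justify a sequential coupling $\mathbb{1}_{E_i}\le\xi_i$ with independent $\xi_i$. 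In fact no such coupling can exist in general: take $u=2$ and $H_s=0$ (so $M=2$, which satisfies $|\varepsilon|<1$ for $n\ge3$). Then $E_1=E_2=\{q_1\text{ and }q_2\text{ land on the same vertex}\}$, so $\Prob(E_1\cap E_2)=1/n$, whereas any coupling with $\mathbb{1}_{E_i}\le\xi_i$ for iid $\xi_i\sim\text{Ber}(M/n)$ would force $\Prob(E_1\cap E_2)\le(M/n)^2=4/n^2<1/n$ for $n\ge5$. Equivalently, $\sum_i\mathbb{1}_{E_i}$ is not stochastically dominated by $\text{Bin}(2,M/n)$ here, even though the lemma's conclusion $U_{s+1}\le_{sd}2\,\text{Bin}(2,M/n)$ does hold.

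The fix is precisely what the paper does: define instead $\mathcal{E}_i$ as the event that $q_i$ lands on a vertex occupied by a happy particle or by one of the \emph{already revealed} $q_1,\dots,q_{i-1}$ (the paper calls such a $q_j$ ``temporarily happy''). Now $\mathcal{E}_i$ is $\sigma(q_1,\dots,q_i)$-measurable and $\Prob(\mathcal{E}_i\mid q_1,\dots,q_{i-1})\le M/n$ since at most $M$ vertices are occupied at that moment, so the sequential Bernoulli coupling is legitimate. The charging still yields $U_{s+1}\le2\sum_i\mathbb{1}_{\mathcal{E}_i}$: if $\mathcal{E}_i$ fails then $q_i$ is temporarily happy, and if some later $q_j$ lands on it then $\mathcal{E}_j$ occurs and $q_i$ is the single extra particle charged to $q_j$.
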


\begin{proof}
By the definition of the binomial process,
\[
    Z_t =_d 2 \,\sum_{i=1}^{Z_{t-1}}B_{i,t}
    ~\text{  for all  }~ t\in \mathbb{N},
\]
where $(B_{i,t})_{i,t\in \mathbb{N}}$ is a doubly-infinite sequence of iid $\text{Ber}((1+\varepsilon)/2)$ random variables. We claim that, for all $t\geq 1$, 
\begin{equation}\label{stochdom}
U_{t_0+t}\leq_{sd} 2\sum_{i=1}^{U_{t_0+t-1}}B_{i,t}.
\end{equation}
To see this, at step $t_0+t$, remove the particles in ${\cal U}_{t_0+t-1}$ from the graph and put them back one after the other (by choosing for each one of them independently and uniformly at random a vertex) to obtain ${\cal U}_{t_0+t}$.
Note that, during this process, the probability that the $j$-th particle, where $1 \le j \le U_{t_0+t-1}$, is put on one of the, at most $M$, currently occupied vertices is at most~$M/n$. 
Moreover, if this event occurs, then the number of unhappy particles increases by at most~2; it increases by exactly~2 if the $j$-th particle is put on a vertex that is currently occupied by exactly one other particle.
This establishes~\eqref{stochdom}.
With this at hand we proceed by showing the statement of the lemma by induction over $t$. 
Note that, for $t=0$, we have that $U_{t_0} = Z_0$. 
Next, suppose that the statement is true for $t-1$, where $t\in \mathbb{N}$. Then, for any $z \in \mathbb{N}_0$,
\begin{align*}
    \mathbb{P}(U_{t_0+t}\geq z)
    \le \mathbb{P}\left(\sum_{i=1}^{U_{t_0+t-1}}B_{i,t}\geq z/2\right)
    &\le \mathbb{P}\left(\sum_{i=1}^{Z_{t-1}}B_{i,t}\geq z/2\right)=\mathbb{P}(Z_t\geq z).
\qedhere
\end{align*}
\end{proof}
We can also establish a similar lower coupling by using a different parameter for the binomial process, as long as the number of unhappy particles remains sufficiently small. In particular, in the following statement we consider only the case $U_{t_0} \le \delta n$, where $\delta < (1+\varepsilon)/4$, so that the probability in the dominated binomial process~\eqref{eq:defBinProcess} is indeed a probability, that is, it is in $[0,1]$. 
\begin{lem}\label{lem:lowercoupling}
Let $|\varepsilon| < 1$, $0<\delta<(1+\varepsilon)/4$ and consider the dispersion process with $M=(1+\varepsilon)n/2$ particles. Let $t_0\in \mathbb{N}_0$ be such that $U_{t_0}\le \delta n$ and let $\tau=\inf\{t\ge t_0: U_t\ge \delta n\}$.
Then,
$$Z_t(U_{t_0}, \varepsilon - 4\delta) \le_{sd} U_{t_0+t}, \qquad 0\le t \le \tau-t_0.$$
\end{lem}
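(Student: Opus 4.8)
The plan is to mirror the proof of Lemma~\ref{lem:uppercoupling}, but now producing a \emph{lower} bound on $U_{t_0+t}$ by accounting for unhappy particles conservatively. The key observation is that as long as $U_t\le \delta n$, the number of occupied vertices is at most $M \le n/2$ plus the unhappy particles' landing spots, which is still at most $n/2 + \delta n$, so a fresh unhappy particle being placed has a reasonably large probability of landing on an \emph{occupied} vertex and thereby staying (or becoming) unhappy. More carefully: I would again remove the particles of $\mathcal{U}_{t_0+t-1}$ and reinsert them one at a time in their natural order, with $p_j$ placed on the i.i.d.\ uniform vertex $p_{j,t_0+t}$. For each $j$, let $\mathcal{E}_{t_0+t}(j)$ be the event that $p_j$ lands on a vertex that is occupied \emph{at the moment of insertion} by a particle of $\mathcal{H}_{t_0+t-1}$ or by a previously-reinserted particle of $\mathcal{U}_{t_0+t-1}$. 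On $\mathcal{E}_{t_0+t}(j)$ both $p_j$ and (at least) one other particle sitting on that vertex are unhappy in the final configuration, so $U_{t_0+t}\ge \sum_j \mathbb{1}_{\mathcal{E}_{t_0+t}(j)}$; but to get a clean binomial lower bound I instead want each indicator to count $2$ and avoid double counting, so the cleaner route is: pair up the reinserted unhappy particles and, for the $i$-th such pair, let $\mathcal E_i$ be the event that the second particle of the pair lands on the vertex the first one just occupied — this happens with probability exactly $1/n$ — no, that gives too weak a bound. Let me reconsider.

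The right way, matching the $2\,\mathrm{Bin}(\cdot,(1+\varepsilon)/2 - 2\delta)$ target, is as follows. Reinsert the particles of $\mathcal{U}_{t_0+t-1}$ one at a time; when inserting $p_j$, it lands on a vertex that is occupied by a particle of $\mathcal H_{t_0+t-1}$ with probability at least $H_{t_0+t-1}/n = (M - U_{t_0+t-1})/n \ge M/n - \delta = (1+\varepsilon)/2 - \delta$. Crucially this probability is at least this value \emph{conditionally} on everything so far, since it only involves the fixed set $\mathcal H_{t_0+t-1}$ (whose size does not change during reinsertion). If $p_j$ lands on such a vertex $v$, then $p_j$ is unhappy at time $t_0+t$; moreover the happy particle originally on $v$ is now unhappy too, \emph{and it contributes at most once} because distinct such $p_j$'s landing on distinct vertices of $\mathcal H_{t_0+t-1}$ activate distinct happy particles — so I lose a factor when several $p_j$ hit the same happy vertex. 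To handle that, note that the number of $p_j$'s landing on the same $\mathcal H$-vertex is a lower-order correction: the expected number of collisions among $U_{t_0+t-1}\le\delta n$ particles is at most $U_{t_0+t-1}^2/n \le \delta^2 n \le \delta\cdot U_{t_0+t-1}$, which lets me absorb it into the $-4\delta$ slack. So I would set up the bound $U_{t_0+t}\ge 2\sum_{j} \mathbb{1}_{A_j} - (\text{collision correction})$ where $A_j$ is the event $p_j$ hits $\mathcal H_{t_0+t-1}$, the $\mathbb 1_{A_j}$ stochastically dominate i.i.d.\ $\mathrm{Ber}((1+\varepsilon)/2 - \delta)$ conditionally, and the correction is controlled with high enough probability or in expectation to give the clean factor $(1+\varepsilon)/2 - 2\delta$. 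Then $2\,\mathrm{Bin}(U_{t_0+t-1}, (1+\varepsilon)/2-2\delta)$ stochastically dominates $\ldots$ wait, it is the reverse — $U_{t_0+t}$ dominates $2\,\mathrm{Bin}(U_{t_0+t-1}, (1+\varepsilon)/2 - 2\delta)$, and since $\varepsilon - 4\delta \le 2\big((1+\varepsilon)/2 - 2\delta\big) - 1 = \varepsilon - 4\delta$, consistent. One then runs the induction on $t$ exactly as in Lemma~\ref{lem:uppercoupling}: assuming $Z_{t-1}(U_{t_0},\varepsilon-4\delta)\le_{sd} U_{t_0+t-1}$ and using that $\mathrm{Bin}(m,p)$ is stochastically increasing in $m$, combined with the one-step domination just established, gives $Z_t \le_{sd} U_{t_0+t}$ for $t\le\tau-t_0$; the stopping at $\tau$ is exactly what guarantees $U_{t_0+t-1}\le\delta n$ throughout so that the one-step bound applies.

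I would write this with explicit conditioning on the filtration generated by the process up to step $t_0+t-1$ together with the reinsertion order, so that the ``at least $(1+\varepsilon)/2-\delta$ conditionally on the past'' claim is rigorous; the standard lemma that a sequence of indicators each having conditional success probability $\ge p$ stochastically dominates $\mathrm{Bin}(m,p)$ does the rest. The stopping time $\tau$ enters only to ensure $U_{s}\le\delta n$ for all $t_0\le s\le t_0+t-1$ (strictly, $s<\tau$), hence $H_s/n \ge (1+\varepsilon)/2 - \delta$; one should be slightly careful that at $s=\tau$ itself the bound may fail, which is why the conclusion is only claimed for $0\le t\le \tau-t_0$ and the one-step estimate is only invoked for steps strictly before $\tau$.

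The main obstacle is the collision correction: unlike the upper-coupling proof, where overcounting is harmless (it only makes the dominating process larger), here undercounting hurts, and one genuinely needs that few reinserted unhappy particles collide with each other or land on the same happy vertex. I expect this to require either a conditional second-moment/Markov argument showing the number of such collisions is $O(\delta U_{t_0+t-1})$ with the exceptional probability foldable into the coupling, or a more careful sequential exposure in which, at the moment $p_j$ is inserted, one only ``claims'' the happy particle on its vertex if that vertex has not already been claimed — the claimed-vertex indicator then has conditional success probability at least $(H_{t_0+t-1} - (\text{already claimed}))/n \ge (1+\varepsilon)/2 - 2\delta$ since at most $U_{t_0+t-1}\le\delta n$ vertices have been claimed so far, and this cleanly yields $U_{t_0+t}\ge 2\,\mathrm{Bin}(U_{t_0+t-1},(1+\varepsilon)/2-2\delta)$ with no separate correction term at all. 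I would adopt this second, cleaner formulation. The remaining care is purely bookkeeping: verifying the identity $2\big((1+\varepsilon)/2 - 2\delta\big) - 1 = \varepsilon - 4\delta$ to match the stated parameter, and checking $0<\varepsilon-4\delta<1$ holds under the hypothesis $0<\delta<(1+\varepsilon)/4$ so that $Z_t(\cdot,\varepsilon-4\delta)$ is well-defined.
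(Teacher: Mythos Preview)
Your final approach is correct and essentially the same as the paper's. After the exploratory detours, you settle on sequentially reinserting the particles of $\mathcal{U}_{t_0+t-1}$, defining for each $p_j$ an event that guarantees two fresh unhappy particles without double counting, lower-bounding its conditional probability by $(1+\varepsilon)/2-2\delta$, and then running the same induction as in Lemma~\ref{lem:uppercoupling}. The paper does exactly this; the only cosmetic difference is the choice of event: the paper uses $\mathcal{Q}_{t_0+t}(j)=\{p_j$ lands on a vertex with a \emph{single} occupant$\}$, which automatically prevents double counting (once a vertex has two occupants it never satisfies the event again), whereas you track ``unclaimed happy vertices''. Both events yield the same lower bound $H_{t_0+t-1}-U_{t_0+t-1}=M-2U_{t_0+t-1}\ge((1+\varepsilon)/2-2\delta)n$ on the number of eligible vertices and hence the same one-step domination $U_{t_0+t}\ge_{sd}2\,\mathrm{Bin}(U_{t_0+t-1},(1+\varepsilon)/2-2\delta)$.

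One small correction: your final check should read $|\varepsilon-4\delta|<1$, not $0<\varepsilon-4\delta<1$; the hypothesis $0<\delta<(1+\varepsilon)/4$ gives $\varepsilon-4\delta>-1$ (and trivially $\varepsilon-4\delta<\varepsilon<1$), but $\varepsilon-4\delta$ can certainly be negative.
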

\begin{proof}
Let $1\leq t\leq \tau-t_0$ and, similarly to the proof of Lemma~\ref{lem:uppercoupling}, remove the unhappy particles in $\mathcal{U}_{t_0+t-1}$ from the graph and put them back one by one (by choosing for each one of them independently and uniformly at random a vertex).
After every unhappy particle has been put back we end up with $\mathcal{U}_{t_0+t}$.
Denote by $\mathcal{Q}_{t_0+t}(p)$ the event that particle $p\in \mathcal{U}_{t_0+t-1}$ is placed on a vertex containing only a single particle.
Then
   $$U_{t_0+t}\ge 2\sum_{p\in \mathcal{U}_{t_0+t-1}}\mathbb{1}_{\mathcal{Q}_{t_0+t}(p)},$$
since if $\mathcal{Q}_{t_0+t}(p)$ occurs, then $p$ is put on a vertex containing a single particle and both of them are unhappy afterwards.
Note that while adding the unhappy particles back onto the graph, the number of vertices containing exactly one particle is always at least $H_{t_0+t-1}-U_{t_0+t-1}$ (since every particle that is put back decreases the number of happy particles by at most one) and 
$$H_{t_0+t-1}-U_{t_0+t-1}= M-2U_{t_0+t-1}=\frac{1+\varepsilon}{2}n-2U_{t_0+t-1},$$
implying
$$
    \mathbb{P}\big(\mathcal{Q}_{t_0+t}(p) \mid U_{t_0+t-1}, \{\mathcal{Q}_{t_0+t}(p'):p'\in U_{t_0+t-1}\setminus \{p\}\}\big)
    \ge \frac{1+\varepsilon}{2}-2\frac{U_{t_0+t-1}}{n}.
$$
If $U_{t_0+t-1}\le \delta n$ by definition of $\tau$, the last expression is at least $ (1+\varepsilon)/2-2\delta \eqqcolon q$. 
Therefore, by induction on $t$, if $(B_{i,t})_{i,t\geq 1}$ denotes a doubly-infinite sequence of iid $\text{Ber}(q)$ random variables,
\begin{equation*}\label{stochdombelow}
U_{t_0+t}\geq_{sd}2\sum_{i=1}^{U_{t_0+t-1}}B_{i,t}\ge_{sd} 2\sum_{i=1}^{Z_{t-1}}B_{i,t}=Z_t.
\qedhere
\end{equation*}
\end{proof}
In order to study the binomial process $(Z_t(N,\kappa))_{t \in \mathbb{N}_0}$ from~\eqref{eq:defBinProcess} we provide an alternative description of it.
We can think of any particle $p$ counted in $Z_0$ to be the root of an independent Galton-Watson tree $\mathcal{T}_p$, where every individual has $2\text{Ber}((1+\kappa)/2)$ offspring, that is, no offspring with probability $(1-\kappa)/2$ and 2 offspring with probability $(1+\kappa)/2$.
Obviously, $Z_t$ equals in distribution the total number of individuals in all $\mathcal{T}_p$'s at distance $t$ from the root.
Let $\mathcal{T}(\kappa)$ be a Galton-Watson tree with the aforementioned distribution, and let $\{\mathcal{T}(\kappa)\le t\}$ be the event that this branching process has at most $t$ levels (in particular, there are no individuals at distance $t$ from the root).
Then we immediately obtain the following observation, tailored to our intended application.
\begin{fact}\label{obs:binbranch}
Let $|\varepsilon|<1$. Then, for any random variable $N$ with support in $\mathbb{N}$, we have
$$\mathbb{P}(Z_t(N, \varepsilon) = 0 \mid N) = \mathbb{P}(\mathcal{T}(\varepsilon)\le t)^{N}, \quad t \in \mathbb{N}_0.$$
\end{fact}
The following simple lemma provides bounds for the probability that a Galton-Watson tree (having an offspring distribution supported on two integers) has at least $k$ levels. 
\begin{lem}\label{BPgenrallemma1}
Let $|\varepsilon| <1$. Then for any $k\in \mathbb{N}$ 
	\begin{equation}\label{bound1}
	    \mathbb{P}(\mathcal{T}(\varepsilon)\geq k)\leq \frac{2\varepsilon}{1-(1-2\varepsilon)(1+\varepsilon)^{-k+1}}
	\end{equation}
	and
	\begin{equation}\label{bound2}
	    \mathbb{P}(\mathcal{T}(\varepsilon)\geq k)\geq \frac{\varepsilon}{1-(1-\varepsilon)(1+\varepsilon)^{-k+1}}.
	\end{equation} 
\end{lem}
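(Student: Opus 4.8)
The plan is to reduce both inequalities to solving a single scalar recursion for $a_k \coloneqq \mathbb{P}(\mathcal{T}(\varepsilon)\ge k)$. Let $f(s)=\tfrac{1-\varepsilon}{2}+\tfrac{1+\varepsilon}{2}s^2$ be the probability generating function of the offspring law. A first-step analysis — condition on whether the root has $0$ or $2$ children, and use that the subtrees hanging off the root are independent copies of $\mathcal{T}(\varepsilon)$ — identifies $1-a_{k}$ with the $(k-1)$-fold iterate of $f$ at $0$, which is the classical extinction recursion; rearranging it gives
\begin{equation*}
    a_{k+1}=\tfrac{1+\varepsilon}{2}\,a_k\,(2-a_k),\qquad a_1=1 .
\end{equation*}
Since $a_k\in(0,1]$, the substitution $b_k\coloneqq 1/a_k$ (so $b_k\ge 1$) is well defined and turns this quadratic recursion into the manageable form
\begin{equation*}
    b_{k+1}=\frac{b_k}{(1+\varepsilon)\bigl(1-a_k/2\bigr)} .
\end{equation*}
Because $0\le a_k/2\le \tfrac12$, the elementary inequalities $1+x\le \tfrac1{1-x}\le 1+2x$ valid for $x\in[0,\tfrac12]$, together with $a_k b_k=1$, sandwich the right-hand side:
\begin{equation*}
    \frac{b_k+1/2}{1+\varepsilon}\ \le\ b_{k+1}\ \le\ \frac{b_k+1}{1+\varepsilon}.
\end{equation*}

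Each of these is an affine recursion $b_{k+1}\ge \gamma b_k+\delta$ (resp.\ $\le$) with $\gamma=(1+\varepsilon)^{-1}$, whose fixed points are $\delta/(1-\gamma)=\tfrac1{2\varepsilon}$ on the lower side and $\tfrac1\varepsilon$ on the upper side. Subtracting the fixed point and iterating the geometric factor from the initial value $b_1=1$ (for the upper bound on $a_k$ it is slightly more convenient to start one step later, from $b_2=\tfrac{2}{1+\varepsilon}$) produces closed-form bounds
\begin{equation*}
    b_k\ \ge\ \tfrac1{2\varepsilon}\Bigl(1-(1-2\varepsilon)(1+\varepsilon)^{-k}\Bigr),\qquad
    b_k\ \le\ \tfrac1\varepsilon\Bigl(1-(1-\varepsilon)(1+\varepsilon)^{-k}\Bigr);
\end{equation*}
matching the exponent the iteration literally gives to the stated $(1+\varepsilon)^{-k}$ is a one-line manipulation using the sign of $1-2\varepsilon$ (resp.\ $1-\varepsilon$) and the monotonicity of $t\mapsto(1+\varepsilon)^{-t}$, and loses nothing. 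Taking reciprocals, $a_k=1/b_k$, turns these two estimates into \eqref{bound1} and \eqref{bound2}.

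The step that needs a little care — and which I would flag as the main, though purely bookkeeping, obstacle — is legitimising the final inversion: one has to check that the denominators $1-(1-2\varepsilon)(1+\varepsilon)^{-k}$ and $1-(1-\varepsilon)(1+\varepsilon)^{-k}$ are strictly positive (equivalently, that the derived lower bound on $b_k$ stays positive). This holds for all $|\varepsilon|<1$: when $\varepsilon\ge 0$ one has $(1+\varepsilon)^{-k}\le 1$, and when $\varepsilon<0$ numerator and denominator change sign simultaneously, so the quotient remains a valid bound. The borderline case $\varepsilon=0$, where the right-hand sides read $0/0$, is handled by running the same argument with $\varepsilon=0$ throughout (then $b_{k+1}\ge b_k+\tfrac12$ and $b_{k+1}\le b_k+1$, giving $a_k\le\tfrac2{k+2}$ and $a_k\ge\tfrac1{k+2}$, i.e.\ the limiting values of the stated bounds). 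Everything else is elementary algebra.
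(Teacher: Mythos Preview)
Your approach is essentially identical to the paper's: pass to the reciprocal $b_k=1/a_k$, sandwich $1/(1-a_k/2)$ between $1+a_k/2$ and $1+a_k$ to obtain the two affine recursions $b_{k+1}\ge (b_k+\tfrac12)/(1+\varepsilon)$ and $b_{k+1}\le (b_k+1)/(1+\varepsilon)$, and iterate. The only wobble is your initial condition: in the paper's convention the root sits at generation~$0$, so $\mathbb{P}(\mathcal{T}(\varepsilon)\ge 0)=1$ and hence $b_0=1$ (not $b_1=1$); with that indexing the iteration delivers the exponent $(1+\varepsilon)^{-k}$ exactly, and your ``one-line manipulation'' is both unnecessary and, as stated, incorrect (for $0<\varepsilon<\tfrac12$ the shift from $(1+\varepsilon)^{-(k-1)}$ to $(1+\varepsilon)^{-k}$ goes the wrong way).
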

\begin{proof}
Abbreviate $x_k\coloneqq \mathbb{P}(\mathcal{T}(\varepsilon)\geq k)$ for $k\in\mathbb{N}$. Since $\cal T(\varepsilon)$ has at least $k$ levels only if the root has degree two and at least one of the subtrees has at least $k-1$ levels, we obtain $x_1 = 1$ and
\[
    x_k
    = \frac{1}{2}\left(1+\varepsilon\right)(2x_{k-1}-x^2_{k-1})
	= (1+\varepsilon)x_{k-1}\left( 1- \frac{x_{k-1}}{2}\right), \quad k \ge 2.
\]
Setting $y_k\coloneqq 1/x_k$ we obtain for $k \ge 2$
\begin{equation}\label{eq:yexpansion}
    y_k
    = \frac{y_{k-1}}{1+\varepsilon}\left( 1- \frac{x_{k-1}}{2}\right)^{-1}
    \ge
    \frac{y_{k-1}}{1+\varepsilon}\left(1+\frac{x_{k-1}}2\right)
    =
    \frac1{1+\varepsilon}(y_{k-1} + 1/2).
\end{equation}
Then, by solving the linear recursion we obtain
\begin{equation*}
\label{indforyk}
y_k\geq (1+\varepsilon)^{-k+1}+\frac12\sum_{1 \le j \le k-1}(1+\varepsilon)^{-j},
\quad k \in\mathbb{N}.
\end{equation*}
Noting that 
\begin{equation}\label{geomsum}
    \sum_{1 \le j \le k-1}(1+\varepsilon)^{-j}
    = \frac{1-(1+\varepsilon)^{-k+1}}{\varepsilon},
\end{equation}
we then obtain~\eqref{bound1}.
The lower bound is established in a similar way. Indeed, we obtain from $y_k = \frac{y_{k-1}}{1+\varepsilon}(1-x_{k-1}/2)^{-1}$ (see the first `$=$' in \eqref{eq:yexpansion}) using $\sum_{j=2}^{\infty}q^j\leq 2q^2$ for all $q\in [0,1/2]$ that
\begin{equation*}
    y_k
    \leq y_{k-1}(1+\varepsilon)^{-1}(1+x_{k-1}/2+x^2_{k-1}/2)
    \leq (1+\varepsilon)^{-1}(1+y_{k-1}). 
\end{equation*}
Solving again a linear recursion then yields
\[
    y_k \leq (1+\varepsilon)^{-k+1} + \sum_{1 \le j \le k-1}(1+\varepsilon)^{-j},
    \quad k\in\mathbb{N},
\]
and~\eqref{bound2} follows as well, again using~\eqref{geomsum}.
\end{proof}

\section{The Upper Tail}
\label{sec:uppertail}
Throughout this section we assume that $M=(1+\varepsilon)n/2$, where $|\varepsilon| = o(1)$, $|\varepsilon| < 1/9 $ and that, say, $n \ge 100$, so that the statements from the previous section apply. Define
$$
    \hat\varepsilon\coloneqq \max\{ |\varepsilon|, e n^{-1/2}\}
$$
that we will mostly use without further reference.
The main goal of this section is to prove the following statement, from which the bounds stated in Theorem \ref{mainthm} follow immediately. No attempt was made to optimize the constants.
\begin{prop}\label{mainpropsec3}
Let $A\ge 1$. If $\varepsilon< -e n^{-1/2}$, then
$$\mathbb{P}\big(T_{n,M}> A e^{45} \hat{\varepsilon}^{-1}\ln(\hat{\varepsilon}^2 n)\big)\le e^{-(A-1)}.$$
Otherwise, if $\varepsilon \ge -e n^{-1/2}$, then
$$\mathbb{P}\big(T_{n,M} > A \hat{\varepsilon}^{-1} e^{2^{10} \hat{\varepsilon}^2n} \big)\le e^{-(A-1)}.$$
\end{prop}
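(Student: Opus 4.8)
\textbf{Proof proposal for Proposition~\ref{mainpropsec3}.}

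The plan is to split the argument into two phases and handle the sign of $\varepsilon$ separately only at the end. In the \emph{first phase} I would use the drift lemma (Lemma~\ref{drifanalysislemma}) to bring the number of unhappy particles down from its initial value $M = (1+\varepsilon)n/2$ to the threshold $h \coloneqq \max\{\lceil 2\hat\varepsilon n\rceil, C_0\}$ for a suitable constant $C_0$ (say $C_0 = \Theta(1)$, or even $h$ of order $\hat\varepsilon n$ in all cases since $\hat\varepsilon n \ge e n^{1/2} \to \infty$). Lemma~\ref{drifanalysislemma} with $\ell = M$ gives that the hitting time $\tau$ of $\{U_t \le h\}$ satisfies $\mathbb{P}(\tau > b) \le (1 + \log h + 2n/h)/b$; since $h = \Theta(\hat\varepsilon n)$ we have $1 + \log h + 2n/h = \Theta(\hat\varepsilon^{-1})$ (using $\hat\varepsilon \ge e n^{-1/2}$, so $\hat\varepsilon^{-1} \le n^{1/2}/e$ dominates $\log n$). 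Hence for an absolute constant $C_1$, $\mathbb{P}(\tau > A C_1 \hat\varepsilon^{-1}) \le 1/A \le e^{-(A-1)}$ after adjusting constants — actually to get the clean $e^{-(A-1)}$ tail one should iterate the Markov bound over $\lceil A \rceil$ independent-looking blocks using the Markov property of $U_t$: restart the drift lemma each time it fails, so that $\mathbb{P}(\tau > k \cdot 2C_1\hat\varepsilon^{-1}) \le 2^{-k}$, which gives an exponential tail with the right shape. This reduces everything to analyzing the process started from $U_{t_0} \le h = \Theta(\hat\varepsilon n)$.

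In the \emph{second phase} I would invoke the upper coupling with the binomial process (Lemma~\ref{lem:uppercoupling}): $U_{t_0+t} \le_{sd} Z_t(U_{t_0}, \varepsilon)$ for all $t$. By Observation~\ref{obs:binbranch} and the fact that $U_{t_0} \le h$, the probability that $U_{t_0 + t} > 0$ is at most $1 - \mathbb{P}(\mathcal{T}(\varepsilon) \le t)^h = 1 - (1 - \mathbb{P}(\mathcal{T}(\varepsilon) \ge t+1))^h \le h\, \mathbb{P}(\mathcal{T}(\varepsilon) \ge t+1)$ by a union bound over the $h$ branching trees. Now I would feed in Lemma~\ref{BPgenrallemma1}: $\mathbb{P}(\mathcal{T}(\varepsilon) \ge k) \le 2\varepsilon/(1 - (1-2\varepsilon)(1+\varepsilon)^{-k})$ when $\varepsilon > 0$, and note that when $\varepsilon \le 0$ the branching process is (sub)critical so $\mathbb{P}(\mathcal{T}(\varepsilon) \ge k)$ decays like $(1+\varepsilon)^{k} \le e^{\varepsilon k} = e^{-|\varepsilon| k}$ (one needs a clean bound here; for the subcritical case $\mathbb{P}(\mathcal{T}(\varepsilon) \ge k) \le (1+\varepsilon)^{k-1} x_1 \le (1+\varepsilon)^{k-1}$, which suffices, or one re-derives it from the recursion $x_k = (1+\varepsilon)x_{k-1}(1 - x_{k-1}/2) \le (1+\varepsilon) x_{k-1}$). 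Then:
\begin{itemize}
\item If $\varepsilon < -en^{-1/2}$: choosing $t = \Theta(\hat\varepsilon^{-1}\log(\hat\varepsilon^2 n)) = \Theta(|\varepsilon|^{-1}\log(\varepsilon^2 n))$ makes $h\,\mathbb{P}(\mathcal{T}(\varepsilon) \ge t) \le \hat\varepsilon^2 n \cdot e^{-|\varepsilon| t}$ which is $\le e^{-1}$ (indeed exponentially small in the multiplicative factor), because $h = \Theta(\hat\varepsilon n) = \Theta(\varepsilon n)$ and $\log h = \Theta(\log(\varepsilon^2 n) + \log|\varepsilon|^{-1}) = O(\log(\varepsilon^2 n))$. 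Combining with the first phase and iterating over $A$ blocks yields $\mathbb{P}(T_{n,M} > 8A e^{70}\hat\varepsilon^{-1}\log(\hat\varepsilon^2 n)) \le e^{-(A-1)}$.
\item If $\varepsilon \ge -en^{-1/2}$ (so $\hat\varepsilon = \max\{|\varepsilon|, en^{-1/2}\}$ and in particular $\hat\varepsilon^2 n \ge e^2$): when $\varepsilon \le 0$ but $|\varepsilon| \le en^{-1/2}$, the denominator in the supercritical bound is replaced by handling near-criticality directly — here $\mathbb{P}(\mathcal{T}(\varepsilon) \ge k) = O(1/k)$ for $k \le \varepsilon^{-2}$-ish (critical BP survival probability), so $t = \Theta(\hat\varepsilon^{-1} e^{\hat\varepsilon^2 n})$ — actually in this near-critical regime one wants $t$ of order $n^{1/2} \cdot e^{\Theta(\hat\varepsilon^2 n)} = \hat\varepsilon^{-1} e^{\Theta(\hat\varepsilon^2 n)}$ and then $h \cdot \mathbb{P}(\mathcal{T}(\varepsilon) \ge t)$; when $\varepsilon > 0$, the extinction probability $1 - \mathbb{P}(\mathcal{T}(\varepsilon)=\infty)$ is $\approx 1 - 2\varepsilon/(1+\varepsilon)$, and $\mathbb{P}(\mathcal{T}(\varepsilon) \ge k \mid \text{extinct})$ decays geometrically once $k \gg \varepsilon^{-1}$; to kill the whole population of $h = \Theta(\varepsilon n)$ trees one needs $t$ such that $h\,\mathbb{P}(\mathcal{T}(\varepsilon)\ge t) \le e^{-1}$, and since $\mathbb{P}(\mathcal{T}(\varepsilon)\ge t)$ cannot go below the non-extinction probability $\Theta(\varepsilon)$, we actually need $h$ small — this is where the drift in phase one must be pushed all the way: wait, for $\varepsilon > 0$ the drift is positive near $h = \Theta(\varepsilon n)$, so one cannot push below there by drift alone.
\end{itemize}
So the honest structure for $\varepsilon > 0$ must be different: reduce $U_t$ to $\Theta(\varepsilon n)$ by drift, then observe that the binomial process $Z_t(\Theta(\varepsilon n), \varepsilon)$ hits $0$ only if \emph{all} $\Theta(\varepsilon n)$ branching trees go extinct, each with probability $\approx 1 - 2\varepsilon/(1+\varepsilon) \approx 1 - 2\varepsilon$, and conditioned on extinction each dies within $O(\varepsilon^{-1})$ generations with constant probability; so the number of trials needed is roughly $(1-2\varepsilon)^{-\Theta(\varepsilon n)} = e^{\Theta(\varepsilon^2 n)}$ and each trial consumes $O(\varepsilon^{-1})$ steps, giving the claimed $\Theta(\varepsilon^{-1} e^{\Theta(\varepsilon^2 n)})$ — here the subtlety is that the trees are not resampled independently across time blocks by the dispersion process itself; one must use the coupling plus a regeneration/restart argument: whenever $U_t$ returns to $\le h$, re-apply Lemma~\ref{lem:uppercoupling} afresh, and the number of such returns before total extinction is $e^{\Theta(\varepsilon^2 n)}$ with the stated tail after combining with the first-phase bound and the per-block $O(\hat\varepsilon^{-1})$ length.

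\textbf{Main obstacle.} The delicate point is the supercritical/near-critical case $\varepsilon \ge -en^{-1/2}$: one cannot simply drive $U_t$ to $0$ by negative drift, because the drift is nonnegative there. The argument has to pass through the branching-process extinction probability, using Lemma~\ref{BPgenrallemma1} to control both how likely a single generation-$k$ survival is and how many independent "attempts" (i.e., excursions of $U_t$ down to the $\Theta(\hat\varepsilon n)$ band, each lasting $O(\hat\varepsilon^{-1})$ steps via Lemma~\ref{drifanalysislemma}) are needed before all $\Theta(\hat\varepsilon n)$ trees happen to die out simultaneously, which is where the factor $e^{\Theta(\hat\varepsilon^2 n)}$ enters. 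Bounding the length of each excursion and showing the excursions behave "independently enough" (via the Markov property and the one-sided coupling, not an actual independence claim) to sum the geometric number of them into an exponential tail $e^{-(A-1)}$ is the technical heart. The other bookkeeping — checking that $\log h$, $\log(\hat\varepsilon^2 n)$ and $\hat\varepsilon^{-1}$ are all comparable in the relevant ranges so the constants like $8e^{70}$ and $2^{10}$ absorb everything — is routine.
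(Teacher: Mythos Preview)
Your proposal arrives at essentially the same strategy as the paper: reduce $U_t$ to $\Theta(\hat\varepsilon n)$ via Lemma~\ref{drifanalysislemma}, couple with the binomial/branching process via Lemma~\ref{lem:uppercoupling} and Lemma~\ref{BPgenrallemma1}, and iterate in blocks via the Markov property to obtain the $e^{-(A-1)}$ tail. The paper packages this more cleanly: it first isolates the block argument as a standalone lemma (Lemma~\ref{lem:expbound}), asserting that a uniform bound $\min_{s\in[M]}\mathbb{P}(T_{n,M}\le t_0+k\mid U_{t_0}=s)\ge q$ yields $\mathbb{P}(T_{n,M}>Aq^{-1}k)\le e^{-(A-1)}$, and then verifies this uniform $q$ by combining the drift phase and the branching phase inside a \emph{single} block of length $k$ (Lemma~\ref{lem:driftanalysisappl} plus Lemmas~\ref{lem:quickdiesmalleps}/\ref{lem:quickdielargeeps}), rather than iterating the two phases separately as you sketch.

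Two minor technical points in your write-up are worth flagging. First, the crude bound $\mathbb{P}(\mathcal{T}(\varepsilon)\ge k)\le (1+\varepsilon)^{k-1}$ you propose for the subcritical case is too weak when $|\varepsilon|$ is near $n^{-1/2}$: with $k=\Theta(\hat\varepsilon^{-1}\log(\hat\varepsilon^2 n))$ it gives only $(\hat\varepsilon^2 n)^{-\Theta(1)}$, and $h\cdot(\hat\varepsilon^2 n)^{-C}=\Theta(\hat\varepsilon n)(\hat\varepsilon^2 n)^{-C}$ need not be small for any fixed $C$ when $\hat\varepsilon^2 n$ is bounded. You must use Lemma~\ref{BPgenrallemma1} itself (it holds for all $|\varepsilon|<1$, not only $\varepsilon>0$), which gives $\mathbb{P}(\mathcal{T}(\varepsilon)\ge k)=O(1/(\hat\varepsilon n))$ for the relevant $k$. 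Second, even with that bound the union bound $h\cdot\mathbb{P}(\mathcal{T}(\varepsilon)\ge k)$ is only $O(1)$, not $<1$; the paper instead computes the product $(1-\mathbb{P}(\mathcal{T}\ge k))^{h}$ directly to extract the constant $q\ge e^{-70}$. Neither issue is a real obstruction---your overall plan goes through once these are tightened.
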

\noindent
The first step in our proof  establishes, by means of the following lemma, that if there is a positive probability, say $q$, for the dispersion process to finish in less than $k$ steps, given that it started from any number $s\in [M]$ of unhappy particles, then it is (exponentially) unlikely for the dispersion process to run for more than $Aq^{-1}k$ steps. 
\begin{lem}\label{lem:expbound}
Suppose that there is a $q > 0$ and a $k\in\mathbb{N}$ such that for any $t_0\in \mathbb{N}_0$ 
\begin{equation}\label{unifbound}
    \min_{s\in [M]}\mathbb{P}(T_{n,M}\le t_0+k\mid U_{t_0}=s)\ge q.
\end{equation}
Then for all $A > 0$ 
\[\mathbb{P}(T_{n,M}> Aq^{-1} k)\le e^{-(A-1)}.\]
\end{lem}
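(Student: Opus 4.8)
The plan is to exploit the Markov property together with the uniform-in-state lower bound \eqref{unifbound}, turning the event $\{T_{n,M} > Aq^{-1}k\}$ into a failure of $\lceil Aq^{-1}\rceil$ (or rather $\lfloor A/q\rfloor$) independent-ish ``attempts'' to finish, each of which succeeds with probability at least $q$. Concretely, partition the time axis into consecutive blocks $I_j \coloneqq \{jk, jk+1, \dots, (j+1)k\}$ of length $k$ for $j = 0, 1, \dots, m-1$, where $m \coloneqq \lceil A/q \rceil$ so that $mk \ge Aq^{-1}k$. If $T_{n,M} > Aq^{-1}k \ge mk$, then in particular the process has not finished by the end of any of these $m$ blocks; equivalently, for each $j$ we have $U_{(j+1)k} > 0$, and moreover $U_{jk} > 0$ as well. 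The idea is to bound $\mathbb{P}(T_{n,M} > mk)$ by the product, over the $m$ blocks, of the conditional probability that the process fails to reach state $0$ within that block given its state at the block's start.

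First I would make the conditioning precise. Write $\mathcal{F}_t$ for the natural filtration of the process $(U_t)$ (or of the richer configuration process). For each $j \in \{0,\dots,m-1\}$ let $E_j \coloneqq \{U_t > 0 \text{ for all } t \le (j+1)k\}$ be the event that the process has not finished by time $(j+1)k$. Then $\{T_{n,M} > mk\} = E_{m-1} \subseteq E_{m-2} \subseteq \cdots \subseteq E_0$, so
\[
\mathbb{P}(T_{n,M} > mk) = \mathbb{P}(E_{m-1}) = \mathbb{P}(E_0)\prod_{j=1}^{m-1}\mathbb{P}(E_j \mid E_{j-1}).
\]
Now fix $j \ge 1$ and condition further on $\mathcal{F}_{jk}$. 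On the event $E_{j-1}$ we have $U_{jk} \ge 1$, so $U_{jk} = s$ for some $s \in [M]$. By the Markov property of the dispersion process (the evolution after time $jk$ depends only on the configuration at time $jk$, and the event of finishing within $k$ further steps depends only on that future evolution), applying \eqref{unifbound} with $t_0 = jk$ gives
\[
\mathbb{P}\big(\text{process finishes during } I_j \mid \mathcal{F}_{jk}\big) = \mathbb{P}(T_{n,M} \le jk + k \mid U_{jk}) \ge q
\quad\text{on } E_{j-1}.
\]
Hence $\mathbb{P}(E_j \mid \mathcal{F}_{jk}) = 1 - \mathbb{P}(T_{n,M}\le jk+k\mid \mathcal{F}_{jk}) \le 1 - q$ on $E_{j-1}$, and taking conditional expectations, $\mathbb{P}(E_j \mid E_{j-1}) \le 1-q$. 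The same argument with $t_0 = 0$, $s = M$ handles the base case $\mathbb{P}(E_0) \le 1-q$. Multiplying, $\mathbb{P}(T_{n,M} > mk) \le (1-q)^m$. Finally, since $T_{n,M}$ takes integer values and $Aq^{-1}k$ need not be a multiple of $k$, I would simply use $\{T_{n,M} > Aq^{-1}k\} \subseteq \{T_{n,M} > (m-1)k\}$ if one prefers to avoid the ceiling, or more cleanly note $m = \lceil A/q\rceil \ge A/q$ so $mk \ge Aq^{-1}k$ and $\{T_{n,M} > Aq^{-1}k\} \subseteq E_{m-1}$; then
\[
\mathbb{P}(T_{n,M} > Aq^{-1}k) \le (1-q)^{m} \le (1-q)^{A/q} \le e^{-q \cdot A/q} = e^{-A} \le e^{-(A-1)},
\]
using $1-q \le e^{-q}$. (The statement in the lemma is $e^{-(A-1)}$, so there is room to spare; the slack presumably absorbs the rounding in $m$ when $A/q < 1$ is not an issue since $A \ge 1$, but if $q$ is such that $A/q$ is barely above an integer one still gets at least $e^{-(A-1)}$ comfortably.)

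The only genuine subtlety — and the step I would be most careful about — is the legitimacy of invoking the Markov property in the conditional form used above, i.e.\ that $\mathbb{P}(T_{n,M}\le t_0+k \mid \mathcal{F}_{t_0})$ is a function of $U_{t_0}$ alone and equals the quantity in \eqref{unifbound}. This requires knowing that the dispersion process, as a process on configurations, is Markov and that the hypothesis \eqref{unifbound} is genuinely a statement about the transition kernel started from $s$ unhappy particles (regardless of which particles are unhappy or where the happy ones sit), so that it applies verbatim after conditioning on $\{U_{t_0} = s\}\cap F$ for any $F \in \mathcal{F}_{t_0}$ measurable w.r.t.\ the past. By the exchangeability/symmetry of $K_n$ this is clear, but it is worth one sentence. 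Everything else is the elementary telescoping-product and the bound $1-q\le e^{-q}$; no real calculation is needed.
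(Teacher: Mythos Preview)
Your approach is essentially identical to the paper's: partition time into blocks of length $k$, telescope the survival probability as a product of conditional probabilities, bound each factor by $1-q\le e^{-q}$ via the hypothesis and the Markov property, and multiply. The paper's write-up is slightly more informal about the rounding (it just runs the product up to $Aq^{-1}$), but the substance is the same.

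One small slip to fix: your ``more clean'' route has the inclusion backwards. With $m=\lceil A/q\rceil$ you have $mk\ge Aq^{-1}k$, hence $E_{m-1}=\{T_{n,M}>mk\}\subseteq\{T_{n,M}>Aq^{-1}k\}$, not the other way around, so you cannot conclude $\mathbb{P}(T_{n,M}>Aq^{-1}k)\le (1-q)^m$. Your first option is the correct one: take $m'=\lfloor A/q\rfloor$ (equivalently use $(m-1)k$), so that $\{T_{n,M}>Aq^{-1}k\}\subseteq\{T_{n,M}>m'k\}$ and
\[
\mathbb{P}(T_{n,M}>Aq^{-1}k)\le (1-q)^{m'}\le e^{-qm'}\le e^{-q(A/q-1)}=e^{-(A-q)}\le e^{-(A-1)}.
\]
This is precisely why the lemma has $e^{-(A-1)}$ rather than $e^{-A}$; the ``room to spare'' you noticed is exactly consumed by the floor.
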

\begin{proof}
Set $I_i=((i-1)k, ik]$ for all $i\in \mathbb{N}$ so that $\mathbb{N}$ is the union of all those intervals. Note that if~$T_{n,M} > Aq^{-1}k$, then necessarily the process of unhappy particles cannot terminate in any of the  intervals $I_i$ with $i \le A q^{-1}$. Denote by $\mathcal{S}_i$ the event that the process of unhappy particles \textit{does not} terminate until the end of $I_i$; that is,
\begin{equation*}
    \mathcal{S}_i \coloneqq \{U_t>0 \text{ for all } t\in I_i\}, \text{ }i\in \mathbb{N}.
\end{equation*}
So,
\begin{align}\label{befkeylemma}
    \mathbb{P}(T_{n,M}>Aq^{-1} k)
    \le \prod_{1\le i \le \lfloor Aq^{-1} \rfloor }\mathbb{P}\big(\mathcal{S}_i \mid \cap_{r=1}^{i-1}\mathcal{S}_r\big).
\end{align}
Now thanks to our hypothesis~\eqref{unifbound} and the Markov property of the process of unhappy particles 
\[
    \mathbb{P}\big(\mathcal{S}_i \mid  \cap_{r=1}^{i-1}\mathcal{S}_r\big)
    =\mathbb{P}\big(\mathcal{S}_i \mid  U_{(i-1)k}>0\big)
= 1-\mathbb{P}(T_{n,M}\le ik \mid U_{(i-1)k}>0)\le 1-q \le e^{-q}.
\]
Substituting this into~\eqref{befkeylemma} finishes the proof. 
\end{proof}
The remainder of this section is devoted to establishing~\eqref{unifbound} for some appropriate $k$ and $q$.
Recall that $\hat{\varepsilon} = \max\{|\varepsilon|,en^{-1/2}\}$. The next statement basically asserts that if there is a positive (conditional) probability, say $q$, that the algorithm stops in less than $k/2$ steps, for some $k\geq \hat{\varepsilon}^{-1}$, given that we started with $r< 16\hat{\varepsilon}n$ unhappy particles, then with (conditional) probability at least $q/2$ the process will finish within less than $k$ steps. 
\begin{lem}\label{lem:driftanalysisappl}
If $k\ge \hat{\varepsilon}^{-1}$ and $q\in \mathbb{R}$ satisfy for any $t_0\in \mathbb{N}_0$
\begin{equation}\label{hplem}
    \min_{r < 16\hat{\varepsilon}n} \mathbb{P}(T_{n,M}\le t_0+k/2\mid U_{t_0}=r)\ge q,
\end{equation}
then we have for any $t_0\in \mathbb{N}_0$
$$\min_{s\in [M]}\mathbb{P}(T_{n,M}\le t_0+k\mid U_{t_0}=s)\ge q/2.$$
\end{lem}
\begin{proof}
Note that if there is an $m\leq k/2$ such that $U_{t_0+m}< 16\hat{\varepsilon}n$ and $U_{t_0+m+k/2}=0$, then clearly we must have $T_{n,M}\leq t_0+k$, too. Consequently, 
\begin{equation*}
    \mathbb{P}(T_{n,M}\le t_0+k\mid U_{t_0}=s)
    \geq
    \mathbb{P}\big(\exists 
 m\leq k/2:U_{t_0+m}< 16\hat{\varepsilon}n, U_{t_0+m+k/2}=0\mid U_{t_0}=s\big).
\end{equation*}
We shall also use a second observation. Let $0\le r < 16 \hat{\varepsilon}n$ and $0 \le m \le k/2$. Our assumption~\eqref{hplem} and the Markov property of the process imply
\begin{align*}
\mathbb{P}(U_{t_0+m+k/2} = 0\mid U_{t_0+m}=r)
\ge \mathbb{P}(T_{n,M} \le t_0+k/2\mid U_{t_0}=r)
\ge q.
\end{align*}
Let $\tau \ge t_0$ be the first time such that $U_\tau < 16\hat{\varepsilon}n$. By conditioning on $\tau$ and $U_\tau$ and using the Markov property of the process once more we obtain from the two observations that
\begin{align*}
    \mathbb{P}(T_{n,M}\le &~ t_0 +  k  \mid U_{t_0}=s) \\ 
    &\geq q \sum_{0 \le m \le k/2}  \sum_{0 \le r < 16\hat{\varepsilon}n} \mathbb{P}(\tau=t_0+m\mid U_{t_0}=s) \mathbb{P}(U_\tau = r\mid U_{t_0} = s, \tau=t_0+m)\\
    & = q\sum_{0 \le m \le k/2}\mathbb{P}(U_{\tau} < 16\hat{\varepsilon}n, \tau = t_0 + m \mid U_{t_0} = s)\\
    & = q \cdot \mathbb{P}(\tau\le t_0+ k/2 \mid U_{t_0}=s).
\end{align*} 
Note that if $s< 16 \hat{\varepsilon}n$, then $\tau=t_0$ with probability 1. Otherwise,
by applying Lemma~\ref{drifanalysislemma}, 
\begin{align*}
    \mathbb{P}(\tau\le t_0 + k/2 \mid U_{t_0}=s)
    \geq
    1-\frac{4n/(16\hat{\varepsilon}n)}{k/2}\ge \frac{1}{2},
\end{align*}
as $k\ge \hat{\varepsilon}^{-1}$, completing the proof.
\end{proof}
The last remaining piece is to establish a lower bound for the probability that the process starting from $r< 16\hat{\varepsilon}n$ unhappy particles terminates in the following $k/2$ steps; that is, we need to verify~\eqref{hplem} (and so, also~\eqref{unifbound}). To this end we will consider the cases $\varepsilon<-e n^{-1/2}$ and $\varepsilon\ge -e n^{-1/2}$ separately.
\begin{lem}\label{lem:quickdiesmalleps}
Let $t_0 \in \mathbb{N}_0$. Suppose that $\varepsilon<-en^{-1/2}$. Then 
$$\min_{r< 16 \hat{\varepsilon}n}
    \mathbb{P}\big(T_{n,M}\le t_0+\hat{\varepsilon}^{-1}\ln(\hat{\varepsilon}^2 n) \mid U_{t_0}=r\big)
\ge e^{-43},
\quad
t_0 \in\mathbb{N}.$$
\end{lem}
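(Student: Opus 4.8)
The idea is to use the lower-coupling with the binomial process to replace the dispersion process by $|\varepsilon|$-subcritical Galton--Watson trees, and then show that each such tree dies out within $O(\hat\varepsilon^{-1}\log(\hat\varepsilon^2 n))$ generations with probability bounded away from zero, uniformly over the relevant starting values. Concretely, fix $r\le 13\hat\varepsilon n$ unhappy particles at time $t_0$; since $\varepsilon<0$ here we have $\hat\varepsilon=|\varepsilon|$, so the number of unhappy particles stays small and the drift is negative. First I would invoke Lemma~\ref{lem:uppercoupling}: $U_{t_0+t}\le_{sd}Z_t(r,\varepsilon)$ for all $t$. Then, by Observation~\ref{obs:binbranch},
\[
\mathbb{P}(T_{n,M}\le t_0+k\mid U_{t_0}=r)\ge \mathbb{P}(Z_k(r,\varepsilon)=0)=\mathbb{P}(\mathcal{T}(\varepsilon)\le k)^{r},
\]
so it suffices to show that $\mathbb{P}(\mathcal{T}(\varepsilon)\le k)^{r}\ge e^{-70}$ when $k=\hat\varepsilon^{-1}\log(\hat\varepsilon^2 n)$ and $r\le 13\hat\varepsilon n$. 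Equivalently, using $1-x\ge e^{-2x}$ for small $x$, it is enough to prove that $r\cdot\mathbb{P}(\mathcal{T}(\varepsilon)\ge k)\le 35$, say.

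\textbf{Estimating the survival probability.} The next step is to apply the upper bound \eqref{bound1} of Lemma~\ref{BPgenrallemma1}, which (since $\varepsilon<0$, i.e.\ with $|\varepsilon|$ in place of $\varepsilon$ in the subcritical regime) gives
\[
\mathbb{P}(\mathcal{T}(\varepsilon)\ge k)\le \frac{2|\varepsilon|}{1-(1-2|\varepsilon|)(1-|\varepsilon|)^{-k}}.
\]
Wait --- one must be careful with signs: the lemma is stated for $\varepsilon$, and here the offspring mean is $1+\varepsilon=1-|\varepsilon|<1$, so $(1+\varepsilon)^{-k}=(1-|\varepsilon|)^{-k}\ge 1$ and the denominator is genuinely of the form $1-(1-2|\varepsilon|)(1-|\varepsilon|)^{-k}$. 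For this to be a useful (positive, bounded below) quantity we need $(1-|\varepsilon|)^{-k}$ to be large, i.e.\ $k|\varepsilon|\gg 1$; with $k=|\varepsilon|^{-1}\log(|\varepsilon|^2 n)=\hat\varepsilon^{-1}\log(\hat\varepsilon^2 n)$ and $|\varepsilon|=\hat\varepsilon\ge en^{-1/2}$ we get $\hat\varepsilon^2 n\ge e^2$, so $\log(\hat\varepsilon^2 n)\ge 2$, and $(1-|\varepsilon|)^{-k}\ge e^{|\varepsilon|k/2}=e^{(1/2)\log(\hat\varepsilon^2 n)}=(\hat\varepsilon^2 n)^{1/2}$, which is $\ge e$. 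Hence the denominator is at least $1-(1-|\varepsilon|)^{-k}\cdot 1 \ge$ something like $\tfrac12$ once $\hat\varepsilon^2 n$ is moderately large; a cleaner route is to write $1-(1-2|\varepsilon|)(1-|\varepsilon|)^{-k}\ge 1-(1-|\varepsilon|)^{-k}\cdot$... hmm, that's negative. Let me reorganise: since $(1-2|\varepsilon|)(1-|\varepsilon|)^{-k}\to 0$ is false --- it goes to $-\infty$? No: $(1-|\varepsilon|)^{-k}>0$ and $(1-2|\varepsilon|)>0$ for small $\varepsilon$, so the product is positive and \emph{large}, making the denominator \emph{negative}, which would make the bound vacuous. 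So the correct reading must be that \eqref{bound1} is only informative when $(1+\varepsilon)^{-k}$ is small, i.e.\ in the \emph{supercritical} case; in the subcritical case one should instead use the trivial bound or a direct argument. I would therefore re-derive the subcritical estimate from scratch via the recursion $x_k=(1+\varepsilon)x_{k-1}(1-x_{k-1}/2)\le(1+\varepsilon)x_{k-1}=(1-|\varepsilon|)x_{k-1}$, giving $x_k\le(1-|\varepsilon|)^k\le e^{-|\varepsilon|k}$. Plugging in $k=|\varepsilon|^{-1}\log(|\varepsilon|^2 n)$ yields $x_k\le (|\varepsilon|^2 n)^{-1}=(\hat\varepsilon^2 n)^{-1}$, so
\[
r\cdot\mathbb{P}(\mathcal{T}(\varepsilon)\ge k)\le 13\hat\varepsilon n\cdot\frac{1}{\hat\varepsilon^2 n}=\frac{13}{\hat\varepsilon}.
\]
That is too weak --- it diverges. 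So a single factor $(1-|\varepsilon|)$ per step is not enough; I need to exploit the quadratic correction $-x_{k-1}/2$ when $x_{k-1}$ is not yet tiny, which speeds up the early decay, OR, more simply, run the branching process for a few more multiples: take $k=C|\varepsilon|^{-1}\log(\hat\varepsilon^2 n)$ for a large constant $C$, so that $x_k\le(\hat\varepsilon^2 n)^{-C}$, whence $r\,x_k\le 13\hat\varepsilon n(\hat\varepsilon^2 n)^{-C}=13\hat\varepsilon^{1-2C}n^{1-C}$, which for $C\ge 2$ and $\hat\varepsilon\ge en^{-1/2}$ is $\le 13 e^{1-2C}n^{(1-2C)/2}n^{1-C}$ — wait, $\hat\varepsilon^{1-2C}\le(en^{-1/2})^{1-2C}$ only if $1-2C<0$ and $\hat\varepsilon\ge en^{-1/2}$, giving $\hat\varepsilon^{1-2C}\le e^{1-2C}n^{-(1-2C)/2}=e^{1-2C}n^{C-1/2}$; then $r\,x_k\le 13e^{1-2C}n^{C-1/2}n^{1-C}=13e^{1-2C}n^{1/2}$, still diverging. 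The issue is that $\hat\varepsilon$ could be as small as $n^{-1/2}$, where $\hat\varepsilon^2 n=\Theta(1)$ and the ``$\log(\hat\varepsilon^2n)$'' factor is $O(1)$, so $k=O(|\varepsilon|^{-1})=O(n^{1/2})$ and the branching bound must be compared against $r=O(n^{1/2})$ trees --- a borderline regime that the paper presumably handles with a sharper recursion analysis.

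\textbf{The main obstacle and how I would handle it.} The crux is exactly this borderline: showing the $r$ independent subcritical trees \emph{simultaneously} die within $k$ generations with constant probability when $r$ and $|\varepsilon|^{-1}$ are both of order up to $n^{1/2}$ and $k=\hat\varepsilon^{-1}\log(\hat\varepsilon^2n)$. The right tool is a quantitative two-phase analysis of the recursion $x_k=(1+\varepsilon)x_{k-1}(1-x_{k-1}/2)$: in Phase~1, while $x_{k}\ge|\varepsilon|$ (equivalently, roughly, while $k\lesssim|\varepsilon|^{-1}$), the logistic drift $x_{k-1}(1-x_{k-1}/2)$ already forces $x_k$ down to $O(|\varepsilon|)$ within $O(|\varepsilon|^{-1}\log(1/|\varepsilon|))=O(\hat\varepsilon^{-1}\log(\hat\varepsilon^2 n))$ steps (here one uses $x_0\le 1$ and the fact that $\tfrac1{x_k}$ grows roughly additively by $\tfrac12$ per step while $x_k\ge|\varepsilon|$, by \eqref{inductive}); in Phase~2, once $x_k\le|\varepsilon|$, each further step multiplies by a factor $\le(1+\varepsilon)(1)=1-|\varepsilon|$, and one needs $O(|\varepsilon|^{-1}\log(r))=O(\hat\varepsilon^{-1}\log(\hat\varepsilon n))=O(\hat\varepsilon^{-1}\log(\hat\varepsilon^2 n))$ further steps to bring $r\cdot x_k$ below, say, $1/35$. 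Summing the two phases gives the total budget $\le C\hat\varepsilon^{-1}\log(\hat\varepsilon^2 n)$ for an absolute constant $C$, and one checks $C$ can be absorbed so that with $k=\hat\varepsilon^{-1}\log(\hat\varepsilon^2 n)$ and the constant $e^{70}$ slack in the statement the bound $r\cdot\mathbb{P}(\mathcal{T}(\varepsilon)\ge k)\le 35$ holds, hence $\mathbb{P}(\mathcal{T}(\varepsilon)\le k)^r\ge(1-35/r)^r\cdot$-type $\ge e^{-70}$. Finally, translating back: $\mathbb{P}(T_{n,M}\le t_0+k\mid U_{t_0}=r)\ge\mathbb{P}(Z_k(r,\varepsilon)=0)=\mathbb{P}(\mathcal{T}(\varepsilon)\le k)^r\ge e^{-70}$, uniformly over $1\le r\le 13\hat\varepsilon n$, which is the claim. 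I expect the delicate part to be making the Phase~1/Phase~2 split rigorous with clean constants — in particular verifying that Phase~1 really terminates in $O(\hat\varepsilon^{-1}\log(\hat\varepsilon^2n))$ steps rather than $O(\hat\varepsilon^{-1}\log(1/\hat\varepsilon))$ steps (these differ when $\hat\varepsilon^2n$ is bounded, where $\log(1/\hat\varepsilon)\sim\tfrac12\log n$ while $\log(\hat\varepsilon^2n)=O(1)$), which forces one to use $x_0\le 1$ together with the exact logistic recursion rather than the crude $x_k\le(1-|\varepsilon|)^k$ bound throughout.
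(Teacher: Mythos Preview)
Your overall strategy is the same as the paper's: couple from above via Lemma~\ref{lem:uppercoupling}, reduce to $\mathbb{P}(\mathcal{T}(\varepsilon)\ge k)^{r}$ via Observation~\ref{obs:binbranch}, and control the survival probability of the subcritical tree. The place where you go off the rails is a sign error in reading~\eqref{bound1}, and this is what sends you into the unnecessary (and, as you yourself suspect, inadequate) two-phase detour.

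In the subcritical case $\varepsilon<0$, write $\varepsilon=-\hat\varepsilon$. Then~\eqref{bound1} reads
\[
\mathbb{P}(\mathcal{T}(\varepsilon)\ge k)\le \frac{2\varepsilon}{1-(1-2\varepsilon)(1+\varepsilon)^{-k}}
=\frac{-2\hat\varepsilon}{\,1-(1+2\hat\varepsilon)(1-\hat\varepsilon)^{-k}\,}.
\]
You correctly observe that the denominator is negative (since $(1-\hat\varepsilon)^{-k}$ is large), but you miss that the \emph{numerator} $2\varepsilon=-2\hat\varepsilon$ is also negative. The ratio is therefore positive and informative. Using $1-\hat\varepsilon\le e^{-\hat\varepsilon}$ and $k\hat\varepsilon=\log(\hat\varepsilon^2n)$ gives $(1-\hat\varepsilon)^{-k}\ge \hat\varepsilon^2 n$, so the denominator is at most $1-(1+2\hat\varepsilon)\hat\varepsilon^2n=-(\hat\varepsilon^2n-1+2\hat\varepsilon^3 n)$, and since $\hat\varepsilon^2n\ge e^2$ this yields
\[
\mathbb{P}(\mathcal{T}(\varepsilon)\ge k)\le \frac{2\hat\varepsilon}{\hat\varepsilon^2 n-1}\le \frac{2(1-e^{-2})^{-1}}{\hat\varepsilon n}.
\]
Now $r\cdot\mathbb{P}(\mathcal{T}(\varepsilon)\ge k)\le 13\hat\varepsilon n\cdot \frac{2(1-e^{-2})^{-1}}{\hat\varepsilon n}=\frac{26}{1-e^{-2}}\approx 30$, which is exactly the bound you were aiming for, and $(1-\frac{2(1-e^{-2})^{-1}}{\hat\varepsilon n})^{13\hat\varepsilon n}\to e^{-26/(1-e^{-2})}\ge e^{-70}$.

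The reason your crude bound $x_k\le(1-\hat\varepsilon)^k$ fails is that it discards the additive $+1/2$ in the recursion $y_k\ge(1+\varepsilon)^{-1}(y_{k-1}+1/2)$ of~\eqref{inductive}; that additive term is what contributes the extra factor $\sim 1/(2\hat\varepsilon)$ in $y_k$ and turns the survival bound from $O((\hat\varepsilon^2n)^{-1})$ into $O((\hat\varepsilon n)^{-1})$. Your two-phase plan is an attempt to recover this by hand, but as you note at the end, Phase~1 alone costs $\Theta(\hat\varepsilon^{-1})$ steps just to reach $x\approx\hat\varepsilon$, which already exhausts the budget $k=\hat\varepsilon^{-1}\log(\hat\varepsilon^2n)$ when $\hat\varepsilon^2n$ is close to $e^2$. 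The fix is not a sharper two-phase split; it is simply to keep both terms together as Lemma~\ref{BPgenrallemma1} does.
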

\begin{proof}
Recall that $U_t=0$ is equivalent to $T_{n,M}\le t$. Also, recall the definition of the binomial process that we introduced in~\eqref{eq:defBinProcess}. Throughout this proof we let $k\coloneqq \lfloor\hat{\varepsilon}^{-1}\ln(\hat{\varepsilon}^2 n)\rfloor$. By applying Lemma~\ref{lem:uppercoupling} for any $1 \le r < 16 \hat{\varepsilon}n$, we obtain 
\begin{equation*}
\mathbb{P}(T_{n,M}\le t_0+k\mid U_{t_0} =r)
=\mathbb{P}(U_{t_0+k}=0 \mid U_{t_0} = r)
\ge \mathbb{P}(Z_{k}(r, \varepsilon) = 0).
\end{equation*}
Thanks to Fact~\ref{obs:binbranch}, in order to provide a lower bound on the probability which appears in the last display, we only need to consider the probability that the Galton-Watson process $\mathcal{T}(\varepsilon)$ has more than $k$ levels.  Namely, we need to bound from below
\begin{equation}
    \mathbb{P}(\mathcal{T}(\varepsilon) \le k)
    =
    1-\mathbb{P}(\mathcal{T}(\varepsilon)\ge  k+1).
\end{equation}
Note that, since $\varepsilon < -en^{-1/2}$, then $|\varepsilon|>en^{-1/2}$ and thus $-\varepsilon=\hat{\varepsilon}$. 
By applying Lemma~\ref{BPgenrallemma1} we obtain
\begin{equation}\label{kl}
    \mathbb{P}(\mathcal{T}(\varepsilon)\ge k+1)
    \le \frac{2\varepsilon}{1-(1-2\varepsilon) (1+\varepsilon)^{-k}}
    = -\frac{2\hat{\varepsilon}}{1-(1+2\hat{\varepsilon})(1-\hat{\varepsilon})^{-k}}.
\end{equation}
Since $1-x \le e^{-x}$, we have $(1-\hat{\varepsilon})^{-k}\geq (1-\hat\varepsilon)e^{(k+1)\hat{\varepsilon}}$. As $(1-\hat\varepsilon)(1+2\hat\varepsilon) \ge 1$ for $\hat\varepsilon\in[0,1/2]$
\begin{equation*}
    1-(1+2\hat{\varepsilon})(1-\hat{\varepsilon})^{-k}
    \le 1-e^{(k+1)\hat{\varepsilon}}
    \le -(\hat{\varepsilon}^2n-1),
\end{equation*}
where the last inequality follows from $k\ge \hat{\varepsilon}^{-1}\ln(\hat{\varepsilon}^2 n)-1$. Therefore, since
$(\hat{\varepsilon}^2n)^{-1}\leq e^{-2} < 1/5$,
we see that the ratio on the right-hand side of (\ref{kl}) is at most
\[
    \frac{2\hat{\varepsilon}}{\hat{\varepsilon}^2n-1}\leq \frac{2\hat{\varepsilon}}{\hat{\varepsilon}^2n(1-e^{-2})}
    \le \frac52 \cdot \frac1{\hat{\varepsilon}n}.
\]
Finally, since $(1-xN^{-1})^N \ge (1-xN^{-1})e^{-x}$ for $x \ge 0$, we conclude that with probability at least 
$$
    \left(1- \frac52 \cdot \frac1{\hat{\varepsilon}n}\right)^r
    \ge \left(1- \frac52 \cdot \frac1{\hat{\varepsilon}n}\right)^{16 \hat{\varepsilon}n}
    \ge \left(1- \frac52 \cdot \frac1{\hat{\varepsilon}n}\right)e^{-40}.
$$
all $Z_0 = r$ trees die out by level $k$. Since $\hat\varepsilon n \ge e$ we further obtain $1- 5/(2\hat{\varepsilon}n) \ge 1 - 5/(2e)$ and this value is, say, at least $ e^{-3}$. The claim follows.
\end{proof}
Next we consider the case $\varepsilon \ge - e n^{-1/2}$.
\begin{lem}\label{lem:quickdielargeeps}
Let $t_0 \in \mathbb{N}_0$.
Suppose that $\varepsilon \ge -e n^{-1/2}$. Then
$$\min_{r < 16 \hat{\varepsilon}n}\mathbb{P}(T_{n,M}\le t_0+\hat{\varepsilon}^{-1}/2\mid U_{t_0}=r)\ge e^{-2^9\hat{\varepsilon}^2 n}, \quad t\in\mathbb{N}_0.
$$
\end{lem}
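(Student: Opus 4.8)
The setup mirrors Lemma~\ref{lem:quickdiesmalleps}, but now $\varepsilon$ can be zero or positive, so the branching process $\mathcal{T}(\varepsilon)$ is critical or supercritical and the trees need not die out with constant probability. Instead we aim for the much weaker conclusion that $r \le 13\hat\varepsilon n$ trees all die within $k\coloneqq \hat\varepsilon^{-1}/2$ generations with probability at least $e^{-2^{10}\hat\varepsilon^2 n}$, which only requires a lower bound on $\mathbb{P}(\mathcal{T}(\varepsilon) < k)$ that is bounded away from $0$ by a constant. The first step is exactly as before: by Lemma~\ref{lem:uppercoupling} and Observation~\ref{obs:binbranch},
\[
    \mathbb{P}(T_{n,M}\le t_0+k \mid U_{t_0}=r) \ge \mathbb{P}(Z_k(r,\varepsilon)=0) = \mathbb{P}(\mathcal{T}(\varepsilon)<k)^r \ge \mathbb{P}(\mathcal{T}(\varepsilon)<k)^{13\hat\varepsilon n}.
\]
So it suffices to show $\mathbb{P}(\mathcal{T}(\varepsilon)\ge k)$ is bounded above by a constant strictly less than $1$; then raising $1-\text{const}$ to the power $13\hat\varepsilon n$ gives $e^{-\Theta(\hat\varepsilon n)}$, which is comfortably larger than $e^{-2^{10}\hat\varepsilon^2 n}$ since $\hat\varepsilon = o(1)$ forces $\hat\varepsilon n \ge \hat\varepsilon^2 n / \hat\varepsilon \gg \hat\varepsilon^2 n$. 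Wait — that inequality goes the wrong way only if $\hat\varepsilon n$ is small; but $\hat\varepsilon \ge e n^{-1/2}$ gives $\hat\varepsilon n \ge e n^{1/2} \to \infty$ and $\hat\varepsilon^2 n \ge e^2$, and more to the point $\hat\varepsilon n = \hat\varepsilon^2 n \cdot \hat\varepsilon^{-1} \ge \hat\varepsilon^2 n$ since $\hat\varepsilon \le 1$, so indeed $e^{-c\hat\varepsilon n} \le e^{-c\hat\varepsilon^2 n}$ and with $c$ a suitable constant the bound $e^{-2^{10}\hat\varepsilon^2 n}$ follows for large $n$.

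\textbf{The key estimate.} It remains to bound $x_k\coloneqq \mathbb{P}(\mathcal{T}(\varepsilon)\ge k)$ for $k = \hat\varepsilon^{-1}/2$ when $\varepsilon \ge -e n^{-1/2}$, i.e.\ $\varepsilon \ge -\hat\varepsilon$. I would split into the cases $\varepsilon \le 0$ and $\varepsilon > 0$. When $0 \le -\varepsilon \le \hat\varepsilon$ (equivalently $-\hat\varepsilon \le \varepsilon \le 0$), one can apply the lower bound on $y_k$ from the proof of Lemma~\ref{BPgenrallemma1} — or rather a monotonicity/coupling argument comparing $\mathcal{T}(\varepsilon)$ with $\mathcal{T}(0)$ (fewer offspring when $\varepsilon$ is smaller) — to reduce to the critical case $\varepsilon = 0$. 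Wait, $\varepsilon \le 0$ means $\mathcal{T}(\varepsilon)$ is stochastically \emph{dominated} by $\mathcal{T}(0)$, so $x_k \le \mathbb{P}(\mathcal{T}(0)\ge k)$; for the critical binary Galton-Watson process this is the classical $\Theta(1/k)$ bound (indeed $\mathbb{P}(\mathcal{T}(0)\ge k) = 2/(k+2)$ or similar, obtainable directly from the recursion $x_k = x_{k-1}(1-x_{k-1}/2)$, which gives $y_k = y_{k-1} + 1/2 + O(x_{k-1})$, hence $y_k \sim k/2$). With $k = \hat\varepsilon^{-1}/2 \to \infty$ this gives $x_k = o(1)$, far better than needed. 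For $\varepsilon > 0$ we need more care because the process is supercritical with survival probability $\approx 2\varepsilon/(1+\varepsilon) \approx 2\varepsilon$; but Lemma~\ref{BPgenrallemma1}, inequality~\eqref{bound1}, directly gives
\[
    x_k \le \frac{2\varepsilon}{1-(1-2\varepsilon)(1+\varepsilon)^{-k}}.
\]
With $k = \hat\varepsilon^{-1}/2 = \varepsilon^{-1}/2$ (in this subcase $\hat\varepsilon = \varepsilon$ when $\varepsilon > e n^{-1/2}$; when $0 < \varepsilon \le e n^{-1/2}$ we have $\hat\varepsilon = en^{-1/2} \ge \varepsilon$, so $k = \hat\varepsilon^{-1}/2 \le \varepsilon^{-1}/2$), we have $(1+\varepsilon)^{-k} = (1+\varepsilon)^{-\Theta(1/\varepsilon)}$ or $(1+\varepsilon)^{-\Theta(1/\hat\varepsilon)}$, which is bounded away from $1$ by a constant (roughly $e^{-1/2}$ in the first case), so the denominator $1-(1-2\varepsilon)(1+\varepsilon)^{-k}$ is at least some positive constant, giving $x_k = O(\varepsilon) = o(1)$.

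\textbf{Assembling the bound and the main obstacle.} Putting the pieces together, in every subcase $\mathbb{P}(\mathcal{T}(\varepsilon) < k) = 1 - x_k \ge 1/2$ say for $n$ large, whence
\[
    \mathbb{P}(T_{n,M}\le t_0+k \mid U_{t_0}=r) \ge (1/2)^{13\hat\varepsilon n} = e^{-13\log 2 \cdot \hat\varepsilon n} \ge e^{-2^{10}\hat\varepsilon^2 n}
\]
for large $n$, using $\hat\varepsilon n \le (\text{const})\,\hat\varepsilon^2 n$... — no, that is false; the correct direction is $\hat\varepsilon n \ge \hat\varepsilon^2 n$, so $e^{-c\hat\varepsilon n} \le e^{-c\hat\varepsilon^2 n}$, which is the \emph{wrong} direction for a lower bound. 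The resolution: the constant exponent must genuinely be $\hat\varepsilon n$, and the claim $e^{-2^{10}\hat\varepsilon^2 n}$ must itself be the weaker (larger) bound, i.e.\ we need $13\log 2\cdot \hat\varepsilon n \le 2^{10}\hat\varepsilon^2 n$, i.e.\ $\hat\varepsilon \ge 13\log 2 / 2^{10}$, which fails for small $\hat\varepsilon$. So a naive constant bound on $x_k$ is \emph{not} enough when $\hat\varepsilon = o(1)$; we genuinely need $x_k = O(\hat\varepsilon)$, so that $\mathbb{P}(\mathcal{T}(\varepsilon)<k)^{13\hat\varepsilon n} \ge (1 - O(\hat\varepsilon))^{13\hat\varepsilon n} = e^{-O(\hat\varepsilon^2 n)}$. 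This is the crux: from the displays above, $x_k = O(\varepsilon) = O(\hat\varepsilon)$ when $\varepsilon > 0$ (directly from~\eqref{bound1} with the denominator bounded below by a constant), and when $\varepsilon \le 0$ we even get $x_k = O(1/k) = O(\hat\varepsilon)$ from the critical-case analysis. Hence $(1-O(\hat\varepsilon))^{13\hat\varepsilon n}\ge e^{-C'\hat\varepsilon^2 n} \ge e^{-2^{10}\hat\varepsilon^2 n}$ for a suitable absolute constant and $n$ large. The main obstacle, then, is precisely this quantitative $O(\hat\varepsilon)$ control on the $k$-generation survival probability uniformly over $\varepsilon \ge -en^{-1/2}$ with $k = \hat\varepsilon^{-1}/2$ — handled by~\eqref{bound1} for $\varepsilon \ge 0$ and by a short self-contained analysis of the recursion $x_k = (1+\varepsilon)x_{k-1}(1-x_{k-1}/2)$ (or domination by the critical tree) for $-en^{-1/2}\le \varepsilon \le 0$ — after which the coupling chain of Lemma~\ref{lem:uppercoupling} and Observation~\ref{obs:binbranch} closes the argument routinely.
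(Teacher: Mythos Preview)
Your approach is exactly the paper's: reduce via Lemma~\ref{lem:uppercoupling} and Observation~\ref{obs:binbranch} to showing $x_k\coloneqq\mathbb{P}(\mathcal{T}(\varepsilon)\ge k)=O(\hat\varepsilon)$ for $k=\hat\varepsilon^{-1}/2$, then raise $(1-O(\hat\varepsilon))$ to the power $13\hat\varepsilon n$. You also correctly identify (after the false start) that a mere constant bound on $x_k$ is insufficient and that the $O(\hat\varepsilon)$ estimate is the crux.

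There is, however, a real gap in one sub-case. For $0<\varepsilon\le en^{-1/2}$ (so $\hat\varepsilon=en^{-1/2}>\varepsilon$) you assert that $(1+\varepsilon)^{-k}$ is ``bounded away from $1$ by a constant'' and hence that the denominator in~\eqref{bound1} is bounded below by a positive constant. This is false when $\varepsilon\ll\hat\varepsilon$: then $k\varepsilon=\varepsilon/(2\hat\varepsilon)\to 0$, so $(1+\varepsilon)^{-k}\to 1$ and the denominator $1-(1-2\varepsilon)(1+\varepsilon)^{-k}$ tends to $0$. Your stated conclusion $x_k=O(\varepsilon)$ therefore does not follow from your argument in this regime (and would in any case be stronger than what is needed or true, since the survival-to-level-$k$ probability of a near-critical tree is $\Theta(1/k)$, not $\Theta(\varepsilon)$, when $k\varepsilon$ is small).

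The paper fixes this by not aiming for a constant lower bound on the denominator at all: for $\varepsilon>0$ it shows $1-(1-2\varepsilon)(1+\varepsilon)^{-k}\ge 1-(1-2\varepsilon)e^{-k\varepsilon/2}\ge 3k\varepsilon/8$ via $e^{-x}\le 1-x+x^2/2$, which gives $x_k\le 16/(3k)=O(\hat\varepsilon)$ uniformly in $\varepsilon>0$. Equivalently, you could simply note that when $k\varepsilon\le 1/2$ a Taylor expansion of the denominator yields $\Theta(k\varepsilon)$ and hence $x_k=O(1/k)=O(\hat\varepsilon)$ --- i.e.\ treat small positive $\varepsilon$ exactly as you (correctly) treated $\varepsilon\le 0$ by comparison with the critical tree. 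With this patch your argument is complete and coincides with the paper's.
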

\begin{proof}
Similarly to the proof of the previous lemma we estimate the probability that all $r < 16\hat{\varepsilon}n$ independent copies of a Galton-Watson branching process $\mathcal{T}(\varepsilon)$ die out by generation $k\coloneqq  \lfloor\hat{\varepsilon}^{-1} /2\rfloor$. Recall from Lemma~\ref{BPgenrallemma1} that
\begin{equation}\label{claimb}
    \mathbb{P}(\mathcal{T}(\varepsilon)\ge k+1)\leq \frac{2\varepsilon}{1-(1-2\varepsilon)(1+\varepsilon)^{-k}}.
\end{equation}
Recall that $|\varepsilon| < 1/2$.
Using Bernoulli's inequality,
\[
    1-(1-2\varepsilon)(1+\varepsilon)^{-k}
    \ge 1-\frac{1-2\varepsilon}{1+k\varepsilon}
    = \frac{(2+k)\varepsilon}{1+k\varepsilon}.
\]
Note that if $\varepsilon < 0$ then $1+k\varepsilon \le 1$ and otherwise $1+k\varepsilon \le 2$. So we obtain from~\eqref{claimb} that $\mathbb{P}(\mathcal{T}(\varepsilon)\ge k+1) \le 8\hat\varepsilon$.
Therefore the probability that all $r < 16\hat{\varepsilon}n$ trees die out is at least (with room to spare, since, say, $1-x \ge e^{-4x}$ for $x\in[0,8/9]$)
$(1-8\hat{\varepsilon})^{16\hat{\varepsilon}n}\ge e^{-2^{9} \hat{\varepsilon}^2 n}$, and the claim follows.
\end{proof}

\paragraph{Proof of Proposition \ref{mainpropsec3}.}
The first statement follows from Lemma~\ref{lem:expbound},
by setting
$
    k = 2 \lfloor \hat{\varepsilon}^{-1}\ln(\hat{\varepsilon}^2  n) \rfloor
$
and choosing $q = 2e^{-45}$ by applying Lemmas~\ref{lem:driftanalysisappl} and \ref{lem:quickdiesmalleps}. Similarly, the second statement also follows from from Lemma~\ref{lem:expbound}, by setting this time $k= \lfloor \hat{\varepsilon}^{-1} \rfloor$ and choosing $q= 2e^{-2^{10}\hat{\varepsilon}^2n}$ by applying Lemmas~\ref{lem:driftanalysisappl} and~\ref{lem:quickdielargeeps}. 
\qed

\section{The Lower Tail}
\label{sec:lowertail}
Throughout this section we assume that $M=(1+\varepsilon)n/2$, where $|\varepsilon| = o(1)$, $|\varepsilon| < 1/9 $ and that, say, $n \ge 100$, so that the statements from the Section~\ref{prel} apply. Define, like previously, 
\[
    \hat{\varepsilon} \coloneqq \max\{|\varepsilon|, e n^{-1/2}\}.
\]
We show the following statement, from which Theorem~\ref{mainthm2} follows immediately.
\begin{prop}\label{mainpropsec4}
For sufficiently large $n$ the following is true. If $\varepsilon\le e n^{-1/2}$ then 
$$
    \mathbb{P}\big(T_{n,M}\le \hat{\varepsilon}^{-1}\ln(\hat{\varepsilon}^2 n)/ A\big)
    \le 
    3\exp(-A/2^{19}), \quad A \ge 1.
$$
On the other hand, if $\varepsilon > e n^{-1/2}$, then with $ k_0\coloneqq e^{2^{-16}\hat{\varepsilon}^2 n}$ we obtain 
$$\mathbb{P}\big(T_{n,M} \le \hat{\varepsilon}^{-1}k_0/(2A)\big)\le
\left\{
\begin{array}{ll}
      3\exp\left(-\frac{A \hat{\varepsilon}^2 n}{2^{18} k_0 }\right) & \mbox{, if } A>k_0 \\
      5A^{-1} &\mbox{, if } 1 \le A\le k_0
\end{array}
\right. \enspace .
$$
\end{prop}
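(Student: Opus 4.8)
\emph{Overall setup.} I would prove Proposition~\ref{mainpropsec4} (from which Theorem~\ref{mainthm2} follows) by treating $\varepsilon\le e n^{-1/2}$ and $\varepsilon> e n^{-1/2}$ separately. In both cases the workhorse is the lower coupling of Lemma~\ref{lem:lowercoupling} together with the branching translation of Observation~\ref{obs:binbranch} and the survival estimates of Lemma~\ref{BPgenrallemma1}: the strategy is to reduce the statement ``$T_{n,M}$ is small'' to ``a binomial/branching forest lying below $(U_t)$ dies out quickly'', which is then shown to be unlikely because that forest has $\Theta(\hat\varepsilon n)$ roots.

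\emph{The regime $\varepsilon\le e n^{-1/2}$ (near-critical forest).} I would fix $\delta\asymp\hat\varepsilon$ (so $\delta<(1+\varepsilon)/4$ for large $n$ and $|\varepsilon-4\delta|\asymp\hat\varepsilon$), let $t_0$ be the first time $U_t\le\delta n$, and note that $U_{t_0}\ge\delta n/8=\Theta(\hat\varepsilon n)$ except with probability $e^{-\Theta(\hat\varepsilon n)}$, since one step cannot decrease $U_t$ by more than a constant factor — this follows from the bounded-difference concentration of $U_{t_0}$ given $U_{t_0-1}$ (Theorem~\ref{azumahoeffding}) together with the conditional-mean lower bound in Lemma~\ref{expunhappyupper}. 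Lemma~\ref{lem:lowercoupling} then gives $U_{t_0+s}\ge_{sd}Z_s$ with $Z_0=U_{t_0}$ and parameter $\varepsilon-4\delta$, valid while $U$ stays below $\delta n$, and by Observation~\ref{obs:binbranch} the probability this forest dies within $\tau:=\hat\varepsilon^{-1}\log(\hat\varepsilon^2 n)/(4A)$ generations is $\mathbb P(\mathcal T(\varepsilon-4\delta)\le\tau)^{\Theta(\hat\varepsilon n)}$. Using the near-critical form of Lemma~\ref{BPgenrallemma1} (whose bounds degenerate to $\mathbb P(\mathcal T(\varepsilon')\ge k)\asymp 1/(k+1)$ as $\varepsilon'\to0$) this is at most $(1-c/\tau)^{\Theta(\hat\varepsilon n)}\le e^{-\Theta(\hat\varepsilon n/\tau)}$, and since $\hat\varepsilon^2 n/\log(\hat\varepsilon^2 n)=\Omega(1)$ throughout this regime the exponent is $\Omega(A)$, which can be pushed below $e^{-A/2^{11}}$ by fixing the implicit constants. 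The delicate point — and the source of the constant $3$ in the statement — is that the coupling window $\{U_t\le\delta n\}$ closes if $U_t$ climbs back above $\delta n$ before dying, an event having only constant probability per excursion; I would handle it by restarting the branching estimate at \emph{every} re-entry into $\{U_t\le\delta n\}$ (still with $\Theta(\hat\varepsilon n)$ unhappy particles, w.h.p.), bounding the probability of extinction during each down-excursion by $e^{-\Theta(A)}$ and accounting for the number of such excursions without paying the lossy factor $O(\tau)$ that a crude union bound over restart times would incur.

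\emph{The regime $\varepsilon> e n^{-1/2}$ (supercritical, equilibrium $\asymp\varepsilon n$).} Here dying requires an exponentially rare descent from the equilibrium to $0$. I would fix thresholds $L_1=c_1\varepsilon n<L_2=c_2\varepsilon n$ with $c_2<1/4$ (so $\varepsilon-4c_2\varepsilon>0$ and Lemma~\ref{lem:lowercoupling} with parameter $\varepsilon-4c_2\varepsilon$ applies while $U_t\le L_2$) and $c_1$ small relative to $c_2$, and consider the alternating stopping times at which $U_t$ down-crosses $L_1$, up-crosses $L_2$, down-crosses $L_1$, and so on. Two ingredients: (a) from a down-crossing of $L_1$, the lower coupling and Observation~\ref{obs:binbranch} bound the probability of reaching $0$ before returning to $L_2$ by $\rho^{\Theta(\varepsilon n)}=e^{-\Theta(\varepsilon^2 n)}$, where $\rho=\frac{1-\varepsilon'}{1+\varepsilon'}$ is the extinction probability of the supercritical tree $\mathcal T(\varepsilon')$; with the right constants this is $\le1/k_0$, so the number $N$ of down-crossings of $L_1$ before extinction stochastically dominates a Geometric with parameter $1/k_0$, whence $\mathbb P(N\le m)\le m/k_0$. (b) Each crossing of the strip $[L_1,L_2]$ costs at least $v\varepsilon^{-1}$ steps except with some absolute probability $q<1/4$: the drift is $O(\varepsilon^2 n)$ per step, so in fewer than $v\varepsilon^{-1}$ steps the process is displaced by less than $\tfrac12(L_2-L_1)$, forcing a fluctuation of order $\varepsilon n$, which a Freedman/Bernstein-type inequality controls — here one must use the \emph{true} one-step variance $O(\varepsilon n)$ rather than the crude bound $O((\varepsilon n)^2)$ coming from $|U_{t+1}-U_t|\le U_t$ — bounding it by a constant $<1/4$ after tuning $c_1,c_2,v$; for the range $A>k_0$ I would rerun this estimate with the shorter budget $v'\varepsilon^{-1}:=k_0\varepsilon^{-1}/(2A)$, where the variance term dominates the Bernstein denominator and produces the factor $\exp(-\Theta(A\varepsilon^2 n/k_0))$. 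Combining (a) and (b): off an event of probability $\le(4q)^{N/2}$, which is super-exponentially small in $N\ge k_0/A$, at least half of the first $N$ strip crossings are slow, so $T_{n,M}\ge\tfrac v2\varepsilon^{-1}N$; plugging in and splitting on $A\le k_0$ versus $A>k_0$ (in the latter $k_0/(cA)<1$, so $N$ is automatically large enough and the bound comes from the refined part of (b), i.e.\ the impossibility of crossing the strip in fewer than $v'\varepsilon^{-1}$ steps) yields the two cases of the statement.

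\emph{Where the difficulty lies.} The two genuinely delicate steps are: (i) in the supercritical regime, proving that a strip crossing really costs $\Omega(\varepsilon^{-1})$ steps — the naive Azuma bound with $|U_{t+1}-U_t|\le U_t=O(\varepsilon n)$ gives only $e^{-\Theta(\varepsilon)}\to1$, so one must exploit that the one-step variance is only $O(\varepsilon n)$ even though the one-step size is $O(\varepsilon n)$, and run a variance-sensitive concentration inequality; and (ii) in the near-critical regime, the bookkeeping once $U_t$ leaves the coupling window: a down-excursion can terminate after $O(1)$ steps, so estimating the number of excursions by $O(\tau)$ is too weak, and a finer accounting of how the death probability accumulates across excursions is needed (this is also what produces the non-trivial constant $3$).
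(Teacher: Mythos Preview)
Your broad architecture — lower coupling via Lemma~\ref{lem:lowercoupling}, translation to a branching forest via Observation~\ref{obs:binbranch}, strip-crossing stopping times in the supercritical case — matches the paper. But the two places you flag as ``genuinely delicate'' are exactly where you have missed the paper's key device, and your proposed substitutes are either heavier or do not obviously close.

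\textbf{The rounds trick.} Both of your difficulties dissolve once you switch from \emph{steps} to \emph{rounds}: within a single step, expose the unhappy particles one at a time. The round-by-round count of unhappy particles is stochastically dominated by a lazy simple random walk with increments $\pm1$ and bias $\varepsilon$ (each newly-placed particle hits an occupied vertex with probability at most $M/n=(1+\varepsilon)/2$). A crossing from $\hat\varepsilon n/32$ to $\hat\varepsilon n/8$ in at most $j$ steps corresponds to a displacement of order $\hat\varepsilon n$ by this $\pm1$ walk in at most $j\hat\varepsilon n/8$ rounds; after subtracting the drift (which costs nothing when $\varepsilon\le 0$ and at most $\hat\varepsilon n/16$ when $\varepsilon>0$ and $j\le\varepsilon^{-1}/2$), plain Azuma on unit increments gives
\[
\mathbb{P}(\text{crossing in }\le j\text{ steps})\le \exp\!\big(-\hat\varepsilon n/(2^{12}j)\big).
\]
This is the content of the paper's Lemma~\ref{lem:upperboundunhappysmalleps}. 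No Freedman/Bernstein and no variance computation is needed: by going to rounds, the maximal increment drops from $O(\hat\varepsilon n)$ to $1$, which is precisely what kills the bad $Mb$ term you were worried about.

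\textbf{Consequence for the near-critical case.} With $j=k:=\hat\varepsilon^{-1}\log(\hat\varepsilon^2 n)/(4A)$ the bound above is already $\le e^{-A/2^{10}}$ (using $\hat\varepsilon^2 n\ge e^2$). Hence with probability $1-e^{-\Theta(A)}$ the \emph{first} down-excursion below $\hat\varepsilon n/32$ lasts at least $k$ steps, so the lower coupling is valid throughout and a single application of the branching estimate finishes the job. Your multi-excursion bookkeeping is unnecessary, and in fact your sketch does not obviously recover the exponential rate: if one only knows each excursion is long with constant probability, then summing the per-excursion extinction bounds $\exp(-c\hat\varepsilon n/E_i)$ over excursions with $\sum_i E_i\le k$ is maximised at roughly $k/(\hat\varepsilon n)=O(1/A)$, not $e^{-\Theta(A)}$. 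The constant $3$ in the statement comes simply from three additive error terms (the event $\{U_{\tau_1^L}\notin[2^{-10}\hat\varepsilon n,2^{-5}\hat\varepsilon n]\}$, forest extinction, and the crossing event $\mathcal{M}^c$), not from excursion counting.

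\textbf{Consequence for the supercritical case.} With $j=\varepsilon^{-1}/2$ the crossing bound becomes $e^{-\varepsilon^2 n/2^{13}}=k_0^{-1}$, so each of the (at most $k_0/A$) strip crossings is slow except with probability $k_0^{-1}$, and a straight union bound suffices — no need for your binomial $(4q)^{N/2}$ argument with $q$ merely a constant. The paper then controls the per-excursion extinction probability by the same branching computation you describe (Lemma~\ref{lem:localstop}), and combines the two via a union bound over $i\le k_0/A$ when $A\le k_0$, and via the single $i=1$ crossing bound with $j=\varepsilon^{-1}k_0/(2A)$ when $A>k_0$.

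In short: your outline is right, but the rounds-to-$\pm1$-walk reduction is the missing idea, and once you have it both of the steps you call delicate become one-line Azuma applications.
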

An important tool in the proof of Proposition \ref{mainpropsec4} is the coupling of the number of unhappy particles with the binomial process introduced in Section \ref{prel}, see Lemma~\ref{lem:lowercoupling}. As we will see shortly, applying this coupling requires a more involved argument than in the previous section, since now we need (among other things) to carefully balance the following two conditions.
On the one hand, the coupling only works once the number of unhappy particles becomes sufficiently \emph{small}. However, the required bounds on the lower tail of $T_{n,M}$ can only be achieved if, at the start of the coupling, the number of unhappy particles is still sufficiently \emph{large}. In particular, if $U_t$ would make `big jumps', then the process could completely bypass the regime where the previously described balance is achieved. To avoid this, for $b\in\mathbb{N}$ we define the (good) event
\begin{equation}\label{eventa}
    \mathcal{A}(b)
    \coloneqq
    \left\{U_{t+1}\geq 2^{-5} U_{t} \text{ for all } t\leq b \text{ such that } U_{t}>2^{-5}\hat{\varepsilon} n 
    \right\}.
\end{equation}
The next lemma asserts that $\mathcal{A}(b)$ occurs for $b = \exp(O(\hat\varepsilon n))$ with (very) high probability, meaning that it is very unlikely for $U_{t+1}$ to drop below $2^{-5} U_t$ if $U_t>2^{-5}\hat{\varepsilon} n$, for all $t\le b$.
\begin{lem}\label{boundbadevent2}
If $b \le e^{2^{-19}\hat{\varepsilon}n}$, then 
\begin{equation*}
		\mathbb{P}\big(\mathcal{A}(b)^c\big)
		\le \exp \big(- 2^{-19}\hat{\varepsilon} n\big).
		\end{equation*}
	\end{lem}
\begin{proof}
Using the second statement of Lemma~\ref{expunhappyupper} we obtain for $n \ge 100$ 
\begin{equation*}
\mathbb{E}[U_{ t+1} \mid U_{ t}]\geq \frac{1+\varepsilon}{3}U_t \geq 2^{-2}U_{t}.
\end{equation*}  
Then a union bound yields (with plenty of room to spare)
\begin{align}\label{plugazuma}
	\nonumber\mathbb{P}(\mathcal{A}(b)^c)&\leq \sum_{1 \le t \le b}\mathbb{P}\left(U_{ t}>2^{-5}\hat{\varepsilon} n, U_{ t+1}<2^{-5}U_{ t}\right)\\
	\nonumber&\leq \sum_{1 \le t \le b}\mathbb{P}\left(U_{ t}>2^{-5}\hat{\varepsilon} n, U_{ t+1}-\mathbb{E}[U_{ t+1} \mid U_{ t}]<-2^{-5} U_{ t}\right)\\
	&= \sum_{1 \le t \le b} \mathbb{E}\left[\mathbb{1}_{\{U_{ t}>2^{-5}\hat{\varepsilon} n\}}\mathbb{P}\left(U_{t+1}-\mathbb{E}[U_{ t+1}|U_{t}]<-2^{-5}U_{ t} \mid U_{t}\right)\right].
	\end{align} 
    Without loss of generality assume that the set of unhappy particles is $\mathcal{U}_t=\{p_1,\ldots,p_{U_t}\}$ (recall that ${\cal P} = \{p_1, \dots, p_M\}$ is the set of particles). 
    Then we define the Doob martingale 
    \[
        M_i\coloneqq \mathbb{E}_{p_{1,t+1},\dots,p_{i,t+1}}\big[U_{t+1} \mid U_{t}\big], \quad 1\le i \le U_t,
    \]
    and set $M_0=\mathbb{E}[U_{t+1} \mid  U_t]$. In words, $M_i$ is the (conditional) expectation of $U_{ t+1}$ given $U_{ t}$, knowing the values of $p_{1,t+1},\dots,p_{i,t+1}$, i.e.\ the movements of the first $i$ particles that were unhappy at step~$t$. Note that $M_{U_t} = U_{t+1}$ and, moreover, $|M_{i+1}-M_{i}|\leq 2 \eqqcolon c_i$ for $1\leq i\leq U_t$. Therefore we can apply Azuma-Hoeffding's inequality to obtain
	\begin{equation*}
	\mathbb{P}\left(U_{ t+1}-\mathbb{E}[U_{ t+1} \mid U_{ t}]<-2^{-5} U_{ t} \mid U_{ t}\right)
	\leq \exp\left(-\frac{2^{-10} U_{t}}{8} \right),
\end{equation*}
and this is at most $\exp(-2^{-18}\hat{\varepsilon} n)$ if $U_t > 2^{-5}\hat{\varepsilon} n$.
Substituting this last estimate into (\ref{plugazuma}) yields
\begin{equation*}
    \mathbb{P}(\mathcal{A}(b)^c)
    \le \exp\left({2^{-19}\hat{\varepsilon} n} -2^{-18}\hat{\varepsilon} n \right)
    =\exp\left(-2^{-19}\hat{\varepsilon} n \right),
\end{equation*}
and the result follows.
\end{proof}
\noindent
The second issue arising from using the coupling in Lemma~\ref{lem:lowercoupling} between $U_t$ and the binomial process is that it only works over multiple time steps when the number of unhappy particles remains small throughout.
In particular, the coupling provides good estimates only when the number of unhappy particles remains below $\delta n$, where, say, $\delta = \hat{\varepsilon}/8$.
In order to ensure that there are sufficiently many time steps until the number of unhappy particles goes above  $\delta n$, we introduce a second barrier at $\hat{\varepsilon}n/32$, where we initiate the coupling. More specifically, we define $\tau^U_0\coloneqq 0$ and iteratively, for $i\in \mathbb{N}$,
\begin{equation}
    \tau^L_i\coloneqq \inf\big\{t\geq \tau^U_{i-1}+1:U_t\leq \hat{\varepsilon} n/32\big\},
    \text{ }
    \tau^U_i\coloneqq \inf\big\{t\geq \tau^L_i+1:U_t>\hat{\varepsilon} n/8\big\}.
\end{equation}
In words, the stopping times $\tau^L_i$ keep track of the (random) times at which the number of unhappy particles drop below $\hat{\varepsilon} n/32$, whereas the stopping times $\tau^U_i$ record the times at which the process of unhappy particles exceeds $\hat{\varepsilon} n/8$. 
Clearly, the stopping times that we have just defined are finite until the time at which the process terminates.
In particular, for $i\in \mathbb{N}$, if $\tau^U_i<\infty$, then $T_{n,M} > \tau^U_i$ as well.
The next statement establishes that $\tau_i^U - \tau_i^L$ for $i\in \mathbb{N}$ is rather large. In what follows we denote ${\cal F}_t \coloneqq \sigma(\{U_s : 0 \le s \le t\})$ for $t\in \mathbb{N}_0$.
\begin{lem}
\label{lem:upperboundunhappysmalleps}
Let $i,j\in \mathbb{N}$ be such that $j \le \varepsilon^{-1}/2$ when $\varepsilon>0$ (and arbitrary otherwise). On the event $\{\tau_i^L < \infty\} \in \mathcal{F}_{\tau_i^L}$, we have
$$
    \mathbb{P}\big(\tau_i^U-\tau_i^L \ge j \mid \mathcal{F}_{\tau_i^L} \big )
    \ge
    1- \exp\left(-\frac{\hat{\varepsilon} n}{2^{10} j}\right).
$$
\end{lem}
\begin{proof}
Once again, it will be convenient to expose the new positions of the unhappy particles at the beginning of a time step one after the other, and then we repeat the same procedure at the following time steps.
More precisely, we specify a process that stochastically dominates the number of unhappy particles and then show that this process remains below $\hat{\varepsilon}n/8$ for the required number of steps. 
To this end, on $\{\tau_i^L < \infty\}$, we set
\[
    R_0\coloneqq U_{\tau_i^L} ~\text{ and }~ R_\ell\coloneqq R_{\ell-1}+D_\ell  \text{ for }\ell\in \mathbb{N},
\]
where the $D_\ell$'s are iid such that $D_\ell=1$ with probability $(1+\varepsilon)/2$ and $D_\ell=-1$ with probability $(1-\varepsilon)/2$. Define
\[
    S_k := U_{\tau^L_i}+\cdots+ U_{\tau^L_i+k-1},
    \quad k\in \mathbb{N}.
\]
Since the probability that an unhappy particle remains unhappy after moving to a randomly chosen vertex is at most $M/n = (1+\varepsilon)/2$, we note that  $U_{\tau^L_i+1}\leq _{sd}R_{S_1}$  and in general
\[
    U_{\tau^L_i+k} \leq_{sd} R_{S_k}.
\]
Define $\sigma_R$ to be the first \textit{substep} at which the process $R_\ell$ is larger than $\hat{\varepsilon}n/8$; that is,
\[\sigma_R\coloneqq \min\{\ell\in \mathbb{N}:R_\ell\geq \hat{\varepsilon}n/8\}.\]
Observe that, due to the stochastic domination, the probability that there is a step $t < j$ at which $U_{\tau_i^L+t}$ is larger than $\hat{\varepsilon}n/8$ is at most the probability that there exists an $\ell\leq j\hat{\varepsilon}n/8$ at which $R_\ell$ is larger than $\hat{\varepsilon}n/8$.
Therefore the probability in the statement of the lemma is at least
\begin{equation}
\label{eq:tmptiLtuL}    
    1 - \mathbb{P}(\exists \ell\leq j\hat{\varepsilon}n/8 : R_\ell\geq \hat{\varepsilon}n/8).
\end{equation}
In order to prove the lemma we define a martingale in terms of $R_\ell$ and use the Azuma-Hoeffding inequality. Note that $\mathbb{E}[R_{\ell+1} \mid R_\ell]=R_\ell+\varepsilon$ so that $M_\ell\coloneqq R_\ell-\ell\varepsilon$ is a martingale,
and so $M_{\min\{\ell,\sigma_R\}}$ is also a martingale. Observe that
\begin{align}\label{turnintomg}
    \nonumber\mathbb{P}\big(\exists \ell\leq j \hat{\varepsilon}n/8:R_\ell\geq \hat{\varepsilon}n/8\big)
    &\leq \mathbb{P}\big(R_{\min\{(j\hat{\varepsilon}n)/8,\sigma_R\}}\geq \hat{\varepsilon}n/8\big)\\
    \nonumber&=\mathbb{P}\big(M_{\min\{(j\hat{\varepsilon}n)/8,\sigma_R\}}\geq \hat{\varepsilon}n/8-\min\{(j\hat{\varepsilon}n)/8,\sigma_R\}\varepsilon \big).
\end{align}
Using that $\min\{(j\hat{\varepsilon}n)/8, \sigma_R\}\leq j\hat{\varepsilon}n/8$
together with the assumption $j\le \varepsilon^{-1}/2$ when $\varepsilon>0$ and $j\ge 1$ otherwise, we deduce 
\[
    \hat{\varepsilon}n/8-\min\{(j\hat{\varepsilon}n)/8, \sigma_R\}\varepsilon\geq \hat{\varepsilon}n/8- j \hat{\varepsilon}n \varepsilon/8 \ge \hat{\varepsilon}n/16.
\]
We infer
\begin{equation*}
\label{turnintomg}
    \mathbb{P}\big(\exists \ell\leq j \hat{\varepsilon}n/8:R_\ell\ge \hat{\varepsilon}n/8\big)
    \le \mathbb{P}\big(M_{\min\{(j\hat{\varepsilon}n)/8,\sigma_R\}}\ge \hat{\varepsilon}n/16\big).
\end{equation*}
Since $M_0 = R_0 = U_{\tau_i^L}\le \hat{\varepsilon}n/32$ we obtain
\begin{equation*}
    \mathbb{P}\big(M_{\min\{(j\hat{\varepsilon}n)/8, \sigma_R\}}\ge \hat{\varepsilon}n/16 \big)
    \le \mathbb{P}\big(M_{\min\{(j\hat{\varepsilon}n)/8, \sigma_R\}}-M_0\geq \hat{\varepsilon}n/32\big).
\end{equation*}
Using the fact that the martingale differences are bounded from above by $1+|\varepsilon|\leq 2$ (since $|\varepsilon| \le 1/9$), we can use the Azuma-Hoeffding inequality (with $c_i = 2$) to obtain
\begin{align*}
    \mathbb{P}\big(M_{\min\{(j\hat{\varepsilon}n)/8, \sigma_R\}}-M_0\geq \hat{\varepsilon}n/32\big)
    &\leq \exp\left(-{\hat{\varepsilon} n}/{(2^{10}j)}\right),
\end{align*}
completing with~\eqref{eq:tmptiLtuL} the proof of the lemma.
\end{proof}
In the following lemma we first consider the case $\varepsilon\le e n^{-1/2}$. 
Recall that $\tau^L_1$ is the first time $t\in \mathbb{N}$ at which the number of unhappy particles drops below $\hat{\varepsilon}n/32$.
\begin{lem}
\label{lem:boundsmalleps}
Suppose that $\varepsilon\le en^{-1/2}$ and set $b \coloneqq e^{2^{-19}\hat{\varepsilon}n}$. Consider the event 
\[
    \mathcal{S}_1
    \coloneqq
    \big\{2^{-10}\hat{\varepsilon}n \le U_{\tau_1^L}\le 2^{-5} \hat{\varepsilon}n, \tau_1^L \le b \big\}
    \in {\cal F}_{\tau_1^L} ~.
\]
On the event $\mathcal{S}_1$ we have
\[
    \mathbb{P}\left(T_{n,M}
    \le \tau_1^L+\hat{\varepsilon}^{-1}\ln(\hat{\varepsilon}^2 n)/A \mid \mathcal{F}_{\tau_1^L}\right)
    \le 2\exp(-A/2^{13}),
    \quad A \ge 1.
\]
\end{lem}
\begin{proof}
For brevity we will write
\[
    k\coloneqq \lfloor \hat{\varepsilon}^{-1}\ln(\hat{\varepsilon}^2 n)/A \rfloor
\]
throughout. We assume that $k \ge 1$ and $A \ge 4$, as otherwise there is nothing to show.  Note that if $T_{n,M} \le \tau_1^L + k$, then either $k\le \tau_{1}^U-\tau_1^L$ and the process stops within~$k$ steps after $\tau_1^L$, or $k>\tau_{1}^U-\tau_1^L$. Since, by Lemma~\ref{lem:lowercoupling}, the binomial process $ Z_t(U_{\tau_1^L},\varepsilon-\hat{\varepsilon}/2)$  provides a lower coupling for $U_{\tau_1^L+t}$ until time $\tau_1^U$, we obtain on ${\cal S}_1$
\[
    \mathbb{P}\big(T_{n,M}\le \tau_1^L+ k\mid \mathcal{F}_{\tau^L_1}\big) 
    \le \mathbb{P}\big(Z_k(U_{\tau_1^L},\varepsilon-\hat{\varepsilon}/2)=0\mid \mathcal{F}_{\tau^L_1}\big)+\mathbb{P}\big(\tau_1^U-\tau_1^L < k\mid \mathcal{F}_{\tau^L_1}\big).
\]
Hence, on the event ${\cal S}_1$ where $U_{\tau_1^L} \ge 2^{-10}\hat{\varepsilon}n$ and writing
$Z'_t \coloneqq Z_t(\lceil 2^{-10}\hat{\varepsilon}n \rceil,\varepsilon-\hat{\varepsilon}/2)$, we obtain
\begin{align}\label{eq:starblue}
    \mathbb{P}\big(T_{n,M}\le \tau_1^L+ k\mid \mathcal{F}_{\tau^L_1}\big) 
     \le \mathbb{P}\big(Z'_k=0)
    +\mathbb{P}\big(\tau_1^U-\tau_1^L < k\mid \mathcal{F}_{\tau^L_1}\big).
\end{align}
Then it follows from Fact~\ref{obs:binbranch} that  the first term in \eqref{eq:starblue} is at most
$$
    \mathbb{P}\big(Z'_k =0)
    \le \mathbb{P}\big(\mathcal{T}(\varepsilon-\hat{\varepsilon}/2) \le k\big)^{2^{-10}\hat{\varepsilon}n}.
$$
Since $\varepsilon-\hat{\varepsilon}/2\ge -2\hat{\varepsilon}$ (with room to spare), by applying Lemma~\ref{BPgenrallemma1} we obtain 
\begin{equation}\label{LWBblue}
    \mathbb{P}(\mathcal{T}(\varepsilon-\hat{\varepsilon}/2) > k)
    \ge \mathbb{P}(\mathcal{T}(-2\hat{\varepsilon}) > k)
    \ge \frac{2\hat{\varepsilon}}{(1-2\hat{\varepsilon})^{-k}(1+2\hat{\varepsilon})-1}.
\end{equation}
Using $1-x\geq e^{-2x}$ for $x\in [0,1/2]$ and substituting the value of $k$ (where we drop the $\lfloor~\rfloor$) implies 
\[
  (1-2\hat{\varepsilon})^{-k}(1+2\hat{\varepsilon})
  \le (1+2\hat{\varepsilon})e^{4\hat{\varepsilon}k}
   \le (1+2\hat{\varepsilon}) (\hat{\varepsilon}^2n)^{4/A}.
\]
Since $A\ge 4$, by applying Bernoulli's inequality $(1+x)^r\le 1+rx$ for $x\ge -1$ and $ r\in[0,1]$ to $(1+(\hat{\varepsilon}^2n-1))^{4/A}$ we obtain
\[
  (1-2\hat{\varepsilon})^{-k}(1+2\hat{\varepsilon})-1
  \le 1+2\hat{\varepsilon}+4(1+2\hat{\varepsilon})\frac{\hat{\varepsilon}^2n-1}{A} - 1
  \le \frac{2\hat{\varepsilon}A+4\hat{\varepsilon}^2 n + 8\hat\varepsilon^3n}{A}.
\]
Then, since $k\ge 1$ we infer, say, $A \le \hat\varepsilon n$. 
Moreover, since $\hat\varepsilon \le 1/9$ we obtain $8\hat\varepsilon^3 n \le 8 \hat\varepsilon^2n$, so that
\[
  (1-2\hat{\varepsilon})^{-k}(1+2\hat{\varepsilon})-1
  < \frac{16 \hat{\varepsilon}^2 n}{A}, 
\]
Therefore, from~\eqref{LWBblue} we obtain $\mathbb{P}(\mathcal{T}(\varepsilon-\hat{\varepsilon}/2) > k) > A/(8\hat{\varepsilon}n)$ and so
\begin{equation*}\label{eq:branchingprocdeathblue}
    \mathbb{P}(\mathcal{T}(\varepsilon-\hat{\varepsilon}/2) \le k)^{2^{-10}\hat{\varepsilon}n}
    < \left(1-\frac{A}{8\hat{\varepsilon} n}\right)^{2^{-10}\hat{\varepsilon}n}
    \le e^{-A/2^{13}}.
\end{equation*}
This means that $\mathbb{P}(Z_k' = 0) \le e^{-A/2^{13}}$ in the right-hand side of~\eqref{eq:starblue}. To bound the other term in~\eqref{eq:starblue} we apply Lemma~\ref{lem:upperboundunhappysmalleps} with $i=1$, $j = k$, so that on ${\cal S}_1$
\[
    \mathbb{P}\big(\tau_1^U-\tau_1^L < k\mid \mathcal{F}_{\tau^L_1}\big)
    \le \exp\left(-\frac{\hat\varepsilon n}{2^{10}} \cdot \frac1k\right)
    \le \exp\left(-\frac{\hat\varepsilon n}{2^{10}} \cdot \frac{A}{\hat\varepsilon^{-1} \ln(\hat\varepsilon^2n)}\right).
\]
Note that since $x \mapsto x/\ln x$ is increasing for $x \ge e^2$ we have
$\hat\varepsilon^2 n / \ln(\hat\varepsilon^2 n) \ge e^2/2,$
completing the proof.
\end{proof}

Now we turn our attention to the case $\varepsilon>en^{-1/2}$, where $\varepsilon=\hat{\varepsilon}$ by the definition of $\hat{\varepsilon}$. 
As mentioned earlier, we can study the process using the lower coupling with the binomial process between $\tau_i^L$ and $\tau_i^U$ for $i\ge 1$. When $\varepsilon\le en^{-1/2}$ we only considered the case $i=1$ in the previous lemma, and this will be sufficient for the proof of Proposition~\ref{mainpropsec4}. 
However, if $\varepsilon > en^{-1/2}$, the branching processes associated to the binomial process are supercritical with survival probability $\Theta(\varepsilon)$, and the bounds that we obtain are weaker. In order to overcome this issue,  we will consider \emph{many}  of the intervals to obtain a stronger bound. In particular, let $k_0 \coloneqq  e^{2^{-16}\varepsilon^2n} $. Suppose that the number of unhappy particles drops below the threshold $\varepsilon n/32$; what is then the probability that we finish within the next $\varepsilon^{-1}k_0/A$ steps and \textit{never} go back above $\varepsilon n/8$ unhappy particles? The answer is provided by the following lemma.
\begin{lem}
\label{lem:localstop}
Let $k_0 \coloneqq e^{2^{-16}\varepsilon^2n}$, $b \coloneqq e^{2^{-19}\hat{\varepsilon}n}$ and $i\in\mathbb{N}$. Consider the event
\[
    \mathcal{S}_i\coloneqq\big\{2^{-10}\hat{\varepsilon}n \le U_{\tau_i^L}\le 2^{-5} \hat{\varepsilon}n, \tau_i^L \le b \big\}
    \in {\cal F}_{\tau_i^L}~.
\]
Then, for $\varepsilon>en^{-1/2}$ and $i\in\mathbb{N}$, on the event ${\cal S}_i$  we have
\begin{equation*}
    \mathbb{P}\big(T_{n,M} \le \tau_i^L + \varepsilon^{-1}k_0/A, \tau_i^U = \infty \mid \mathcal{F}_{\tau_i^L}\big)
    \le
    \left\{
	\begin{array}{ll}
		e^{-2^{-11}\varepsilon^2 n }  & \mbox{if } A \le k_0 \\
		e^{-2^{-11}A\varepsilon^2 n/k_0} & \mbox{if } A > k_0
	\end{array}
    \right.
    .
\end{equation*}
\end{lem}
\begin{proof}
Set $k\coloneqq \lfloor \varepsilon^{-1}k_0/A\rfloor$.
By applying Lemma~\ref{lem:lowercoupling} we infer, that as long as $\tau_i^L + t \le \tau_i^U$, the binomial process $Z_t(U_{\tau_i^L}, \varepsilon/2)$ provides a lower coupling for $U_{\tau_i^L + t}$. Thus, if we set $Z'_t \coloneqq Z_t( \lceil 2^{-10}\varepsilon n \rceil, \varepsilon/2)$, then on the event ${\cal S}_i$ the sought probability is at most
\begin{align*}
    \mathbb{P}\big(\tau_i^U = \infty, Z_{k}(U_{\tau_i^L}, \varepsilon/2)=0 \mid \mathcal{F}_{\tau_i^L} \big)
    \le \mathbb{P}\big(Z'_{k} = 0\big) .
\end{align*}
Fact~\ref{obs:binbranch} implies 
\begin{align*}
    \mathbb{P}\big(Z'_k=0\big)
    &\le
    \mathbb{P}\big(\mathcal{T}(\varepsilon/2)
    \leq k \big)^{2^{-10}\varepsilon n }.
\end{align*}
Let $\rho$ be the survival probability of $\mathcal{T}$, that is, the probability that $\mathcal{T}$ has infinitely many (non-empty) levels. Since $\mathcal{T}$ survives only if the root has two children and at least one of them survives, $\rho$ satisfies the equation
\[
    \rho = \frac12(1+\varepsilon/2)(2\rho - \rho^2),
\]
and so $\rho = \varepsilon/(1+\varepsilon/2)$. 
Now, if $A\le k_0$, then 
\begin{align*}
    \mathbb{P}\big(\mathcal{T}(\varepsilon/2)\le k \big)^{2^{-10}\varepsilon n }
        \leq \left(1-\frac{\varepsilon}{1+\varepsilon/2}\right)^{2^{-10}\varepsilon n}
        \leq e^{-2^{-11}\varepsilon^2 n }.
 \end{align*}
Next we consider the case $k_0<A\le \varepsilon^{-1} k_0$. By Lemma~\ref{BPgenrallemma1} and Bernoulli's inequality $(1+x)^{-N} \ge 1 - xN$ for $x > -1$ and $N \in \mathbb{N}$ and a little algebra  we have 
$$	   
    \mathbb{P}\big(\mathcal{T}(\varepsilon/2) \geq k+1\big)
    \ge \frac{\varepsilon/2}{1-(1-\varepsilon/2)(1+\varepsilon/2)^{-k}}
    \ge \frac1{k+1}.
$$
Since $A\le \varepsilon^{-1} k_0$ we infer $1 \le \varepsilon^{-1} k_0/A$. Together with $k \le  \varepsilon^{-1}k_0/A$ we conclude that
\[\mathbb{P}\big(\mathcal{T}(\varepsilon/2) \geq k+1\big) \ge {A\varepsilon}/(2k_0)\]
and therefore
 \begin{align*}
    \mathbb{P}\big(\mathcal{T}(\varepsilon/2)\le \lfloor \varepsilon^{-1}k_0/A \rfloor\big)^{2^{-10}\varepsilon n }
    \le \left(1-\frac{A\varepsilon}{2k_0}\right)^{2^{-10}\varepsilon n}
    \le e^{-2^{-11}A\varepsilon^2 n/k_0}.
 \end{align*}
Finally, for $\varepsilon^{-1}k_0<A$ we have
$\mathbb{P}(\mathcal{T}(\varepsilon/2)\leq \varepsilon^{-1}k_0/A) = 0$
and the result follows.
\end{proof}

We are now ready to establish the main result of the section, where the basic idea
is as follows. If $\varepsilon$ is 'small', namely $\varepsilon \le en^{-1/2}$, then  we use directly Lemma~\ref{lem:boundsmalleps} to obtain the desired bound. 
On the other hand, for all larger $\varepsilon$,
if we start the process with a number of unhappy particles below (but not too far from) $\varepsilon n/32$, then, either we will stop before ever reaching again $\varepsilon n/8$ unhappy particles, or we cross at least once that barrier. 
Lemma~\ref{lem:localstop} asserts that it is quite unlikely to stop within $\varepsilon^{-1}k_0/A$ steps without crossing the barrier. On the other hand, Lemma~\ref{lem:upperboundunhappysmalleps} asserts that it is extremely likely that the process will require at least $\varepsilon^{-1}/2$ steps before crossing the barrier. So, suppose that $\tau_i^U-\tau_i^L\ge \varepsilon^{-1}/2$ for every $i\ge 1$ for which $\tau_i^U<\infty$.
Clearly, if the process traverses
the strip $\{\varepsilon n/32,\dots,\varepsilon n/8\}$
more than $k$ times before stopping, we readily infer that $T_{n,M} \ge \varepsilon^{-1}k/2$. Therefore, if we consider the event $T_{n,M} \le \varepsilon^{-1}k/2$, this means (under our initial assumption) that we can have at most $k$ traversals and there must be a (last) time at which the number of unhappy particles goes below $\varepsilon n/32$ and reaches zero without ever returning above $\varepsilon n/8$. 
Putting everything together then yields the statement.
\paragraph{Proof of Proposition~\ref{mainpropsec4}.}
Set $b\coloneqq e^{2^{-19}\hat{\varepsilon}n}$ and, as before,
\begin{equation}\label{si}
    \mathcal{S}_i\coloneqq\big\{2^{-10}\hat{\varepsilon}n \le U_{\tau_i^L}\le 2^{-5} \hat{\varepsilon}n, \tau_i^L \le b\big\},
    \quad i\in \mathbb{N}.
\end{equation}
We first consider the case $\varepsilon\le e n^{-1/2}$. Set  $k\coloneqq \hat{\varepsilon}^{-1}\ln(\hat{\varepsilon}^2n)/A \ge 1$ (as otherwise there is nothing to show). Depending on whether ${\cal S}_1$ occurs or not and using the tower property of conditional expectation we obtain 
\begin{align*}
\mathbb{P}(T_{n,M} \le k)
    &\le \mathbb{P}\big(T_{n,M} \le \tau^L_1+k, {\cal S}_1\big)
        + \mathbb{P}(T_{n,M} \le k, \mathcal{S}_1^c) \\
    &\le \mathbb{P}\big(T_{n,M} \le \tau^L_1+k, {\cal S}_1\big)
        +\mathbb{P}\big(T_{n,M} \le k, \tau_1^L \ge b\big)
        + \mathbb{P}(\tau_1^L\leq b, \mathcal{S}_1^c) \\
    &= \mathbb{E}\Big[\mathbbm{1}_{\mathcal{S}_1}\mathbb{P}(T_{n,M} \le \tau_1^L + k\mid {\cal F}_{\tau^L_1})\Big]
    +\mathbb{P}(T_{n,M} \le k, \tau_1^L \ge b)
    +\mathbb{P}(\tau_1^L\leq b, \mathcal{S}_1^c).
\end{align*}
Using Lemma~\ref{lem:boundsmalleps} we see that the conditional probability is at most $2\exp(-A/2^{13}) $. Moreover, since $b> \hat\varepsilon^{-1}\ln(\hat\varepsilon^2n) \ge k$ for large $n$ (and uniformly in $\varepsilon$), the event $\{T_{n,M} \le k, \tau_1^L \ge b\}$ has probability zero for all large~$n$.
Finally, by Lemma \ref{boundbadevent2}, we have 
\begin{equation}\label{scomplement}
    \mathbb{P}(\tau_1^L\leq b, \mathcal{S}_1^c)
    \le \exp(-2^{-19}\hat\varepsilon n),
\end{equation}
which is actually true for all $\varepsilon$ such that $\varepsilon = o(1)$ and $|\varepsilon| < 1/9$ by the same lemma (we will use this later when we consider the case $\varepsilon > en^{-1/2}$). 
So, we obtain
\[
    \mathbb{P}(T_{n,M} \le k)
    \le
    2e^{-2^{-13}A}+e^{-2^{-19}\hat\varepsilon n},
\]
and the proof is completed in that case by noting that $A \le \hat\varepsilon n$.
In the rest of the proof we consider the case $\varepsilon > en^{-1/2}$, where $\hat\varepsilon = \varepsilon$. We set
\[
    k \coloneqq \varepsilon^{-1} k_0 / (2A),
    \quad
    \text{where}
    \quad
    k_0 = e^{2^{-16}\varepsilon^2n}~.
\]
We may assume that $k\ge 1$, as otherwise there is nothing to show.
Let us define $X$ to be the \textit{number of traversals} of the strip $\{\varepsilon n/32,\dots,\varepsilon n/8\}$ by the number of unhappy particles until dispersion, that is, $X \coloneqq \sup\{i \in \mathbb{N}_0 : \tau_i^U < \infty\}$.
Then
\begin{equation}\label{firstsplitlastpropblue}
    \mathbb{P}(T_{n,M} \le k)
    = \mathbb{P}(T_{n,M} \le k,X\geq k_0/A)
    +\mathbb{P}(T_{n,M} \le k,X< k_0/A).
\end{equation}
Suppose first that $A>k_0$ (and, since $k\ge1$, $A \le \varepsilon^{-1} k_0 / 2$). As $X$ is integer-valued
\begin{equation}\label{xiszero}
    \mathbb{P}(T_{n,M} \le k, X< k_0/A)
    =
    \mathbb{P}(T_{n,M} \le k, X=0).
\end{equation}
Note that  $k<b$ for large $n$ independently of $A$.
This fact, on the event $\{T_{n,M} \le k\}$, implies that the number of unhappy particles must drop below $\varepsilon n/32$ before time $b$.
So, using once more the tower property as before, the right-hand side in \eqref{xiszero} equals
\begin{align*}
    \mathbb{P}(T_{n,M} \le  k,X=0,\tau^L_1\leq b)
    & = \mathbb{P}(T_{n,M} \le  k,\tau^U_1=\infty,\tau^L_1\leq b)\\
    & \le \mathbb{P}\big(T_{n,M}\leq \tau^L_1+k,\tau_1^U=\infty,\tau^L_1\leq b\big)\\
    &\le \mathbb{E}\big[\mathbbm{1}_{ \mathcal{S}_1}\mathbb{P}\big(T_{n,M} \le \tau_1^L + k, \tau_1^U = \infty \mid \mathcal{F}_{\tau_1^L}\big)\big]+\mathbb{P}(\tau_1^L \le b, \mathcal{S}_1^c).
\end{align*}
Together with Lemma~\ref{lem:localstop} and \eqref{scomplement} this then implies (with $A \le \varepsilon^{-1}k_0 / 2$) 
\begin{equation}
\label{eq:TnMlekX=0}
    \mathbb{P}(T_{n,M} \le  k,X=0)
    \le e^{-2^{-11}A\varepsilon^2 n/k_0}+e^{-2^{-19}\varepsilon n}
    \le 2e^{-2^{-18}A\varepsilon^2 n/k_0}.
\end{equation}
To estimate the remaining probability term in \eqref{firstsplitlastpropblue} (i.e.\ the probability that $T_{n,M}\leq k$ and $X\geq k_0/A$) when $A > k_0$ we proceed as follows.
Denote by $\mathcal{M}_{i,j}$ the event that the number of unhappy particles remains below $\varepsilon n/8$ for at least~$j$ steps after $\tau_i^L$, and that $\tau_i^L \le b$. Then
\begin{align*}
    \mathbb{P}\left(T_{n,M} \le k,X\ge 1 \right)
    &\le \mathbb{P}\big(T_{n,M} \le k, X\ge 1,  \mathcal{M}_{1,k}\big) + \mathbb{P}\big(T_{n,M} \le k, \mathcal{M}_{1,k}^c\big) \\
    &\le \mathbb{P}\big(T_{n,M} \le k, \mathcal{M}_{1,k}^c \big),
\end{align*}
where the last inequality follows from the observation that, if the first traversal runs for at least $k$ steps, so does the whole process. 
Recall that $k<b$; then, by Lemma~\ref{lem:upperboundunhappysmalleps}, since $k \le \varepsilon^{-1}/2$ and $\tau^L_1 < \infty$ if $T_{n,M} \le k$, we obtain 
\[
    \mathbb{P}\big(T_{n,M } \le k, \mathcal{M}_{1,k}^c \big)
    \le \mathbb{E}\left[\mathbbm{1}_{\{\tau^L_1 < \infty\}}\mathbb{P}\big(\tau_1^U - \tau_1^L \le k \mid \mathcal{F}_{\tau^L_1}\big)\right]
    \le \exp\left(-\frac{A\varepsilon^2 n}{2^{11}k_0}\right).
\]
Substituting this last estimate back into~\eqref{firstsplitlastpropblue} and using~\eqref{eq:TnMlekX=0} settles the case $A > k_0$. 

In order to complete the proof, we now consider the remaining case $A \le k_0$ and $\varepsilon > e n^{-1/2}$.
Note that if the dispersion process terminates in at most $k$ steps, then $U_{\tau^L_{X+1}+t}=0$ for some  $t\le k$
and moreover, the process of unhappy particles does not go above $\varepsilon n/8$ after time $\tau_{X+1}^L$ before it terminates.
Recall also that we may assume $k < b$, so that $T_{n,M} \le k$ implies  $\tau_{X+1}^L \le b$. If in addition we recall the definition of $\mathcal{S}_i$ in (\ref{si}) and set
\[
    {\cal S} := \bigcap_{i \ge 1} {\cal S}_i,
    \quad
\]
then $\{T_{n,M} \le b\}\cap {\cal S}^c$ implies ${\cal A}(b)^c$ from Lemma~\ref{boundbadevent2}, so that $\mathbb{P}(\{T_{n,M} \le b\}\cap{\cal S}^c) \le \exp(-2^{-19}\varepsilon n)$. Therefore, going back to the second probability on the right-hand side of (\ref{firstsplitlastpropblue}), 
\begin{align*}
    \mathbb{P}\big(T_{n,M} \le k,&~X< k_0/A\big)
    \le \exp(-2^{-19}\varepsilon n) + \sum_{1 \le i < 1+k_0/A} \mathbb{P}(T_{n,M} \le k,X=i-1,{\cal S}_{i})\\
    &\le \exp(-2^{-19}\varepsilon n) + \sum_{1 \le i < 1+k_0/A} \mathbb{P}(\{T_{n,M} \le \tau_i^L + k, \tau_i^U = \infty\}\cap {\cal S}_{i})\\
    &= \exp(-2^{-19}\varepsilon n) + \sum_{1 \le i < 1+k_0/A} \mathbb{E}\big[\mathbbm{1}_{{\cal S}_i}\mathbb{P}(T_{n,M} \le \tau_i^L + k, \tau_i^U = \infty\mid\mathcal{F}_{\tau^L_{i}}) \big].
\end{align*}
By applying Lemma~\ref{lem:localstop} we obtain from $A \le k_0 =e^{2^{-16}\varepsilon^2n}$ and $|\varepsilon| < 1/9$
\begin{align}
\label{ffp}
    \mathbb{P}\big(T_{n,M} \le k,&X< k_0/A\big)
    \le \exp(-2^{-19}\varepsilon n) + \frac{2k_0}{A} \cdot \exp(-2^{-11}\varepsilon^2 n)
    \le \frac{3}{A}.
\end{align}
Suppose now that $T_{n,M} \le k$ and $X\ge  k_0/A$. Let again $\mathcal{M}_{i,j}$ be the event that the number of unhappy particles remains below $\varepsilon n/8$ for at least~$j$ steps after $\tau_i^L$, and that $\tau_i^L \le b$. Set
\[
    {\cal M} \coloneqq  \bigcap_{1 \le i < 1 + k_0/A} {\cal M}_{i, \varepsilon^{-1}/2}.
\]
By Lemma~\ref{lem:upperboundunhappysmalleps} and a union bound we see that
\[\mathbb{P}(\{T_{n,M} \le b\}\cap {\cal M}^c) \le \frac{2k_0}{A} \exp(-2^{-11}\varepsilon^2n).\]
Since $k<b$, the above estimate allows us to write
\begin{align*}
    \mathbb{P}(T_{n,M} \le k, X\ge k_0/A)&    \le \mathbb{P}\left(\big\{T_{n,M} \le k, X\ge k_0/A\big\}\cap \mathcal{M}\right)+\mathbb{P}(\{T_{n,M} \le b\}\cap {\cal M}^c)\\
    &\leq \mathbb{P}\left(\big\{T_{n,M} \le k, X\ge k_0/A\big\}\cap \mathcal{M}\right)+\frac{2k_0}{A} \exp(-2^{-11}\varepsilon^2n)\\
    &=\frac{2k_0}{A} \exp(-2^{-11}\varepsilon^2n),
\end{align*}
where the last equality follows as the event $\mathcal{M}\cap \{X\geq k_0/A\}$ implies that the dispersion process runs for more than $\varepsilon^{-1}k_0/(2A)$ steps, and thus the probability of the events occurring simultaneously is zero. 
In addition, $2k_0e^{-2^{-11}\varepsilon^2 n}/A\le 2/A$, whence we can conclude that
\[\mathbb{P}(T_{n,M}< k,X\geq k_0/A)\leq 2/A.\]
Together with~\eqref{ffp} this shows
$\mathbb{P}(T_{n,M} \le \varepsilon^{-1}k_0/(2A))\leq 5A^{-1}$ for $A\le k_0$,
completing the proof.
\qed

\bibliographystyle{plain}
\bibliography{ref}

\end{document}